\newtheorem{thm}{Theorem}[section]
\newtheorem{cor}[thm]{Corollary}
\newtheorem{lem}[thm]{Lemma}
\newtheorem{prop}[thm]{Proposition}
\newtheorem{rem}[thm]{Remark}
\newcommand{\Real}{{\mathbb{R}}}
\newcommand{\NF}{{\mathbb{L}}}
\newcommand{\field}{{\mathbb{K}}}
\newcommand{\Com}{{\mathbb{C}}}
\newcommand{\Zet}{{\mathbb{Z}}}
\newcommand{\Nat}{{\mathbb{Z}_{\geq 0}}}
\newcommand{\Pos}{{\mathbb{Z}_{>0}}}
\newcommand{\Pro}{{\mathbb{P}}}
\newcommand{\Res}{{\mathrm{Res}}}
\newcommand{\ab}{{\mathbf{a}}}
\newcommand{\bb}{{\mathbf{b}}}
\newcommand{\jb}{{\mathbf{j}}}
\newcommand{\ori}{{\mathbf{0}}}
\newcommand{\LT}{{\mathrm{LT}}}
\newcommand{\LC}{{\mathrm{LC}}}
\newcommand{\Dom}{{\mathrm{Dom}}}
\newcommand{\Cc}{{\mathcal{C}}}
\newcommand{\Zc}{{\mathcal{Z}}}
\newcommand{\Pc}{{\mathcal{P}}}
\newcommand{\Fc}{{\mathcal{F}}}
\newcommand{\Gc}{{\mathcal{G}}}
\newcommand{\fu}{{\hat{h}}}
\newcommand{\lu}{{l_1}}
\newcommand{\gu}{{g}}
\newcommand{\fd}{{\dot{h}}}
\newcommand{\ld}{{\dot{l_1}}}
\newcommand{\ldd}{{\dot{l_2}}}
\newcommand{\gd}{{\dot{g}}}
\newcommand{\Mat}{{\mathrm{M_{2\times 2}}}}
\newcommand{\GL}{{\mathrm{GL_{2}}}}
\begin{document}
\title{Elekes-R\'{o}nyai Theorem revisited}
\author{Mario Huicochea\\CONACyT-UAZ}
\date{}
\begin{abstract}
  In this paper it is proven that for any $f\in\Real(x_1,x_2)$ and $A_1,A_2$ nonempty finite subsets  of $\Real$ such that $|A_1|=|A_2|$ and $f$ is defined in $A_1\times A_2$, we have that
  \begin{equation*}
  |f(A_1,A_2)|=\Omega\left(|A_1|^{\frac{4}{3}}\right)
  \end{equation*}
  unless there are $g,l_1,l_2\in\Real(x)$ such that $f$ satisfies one of the following equalities:
  \begin{enumerate}
  \item[$\bullet$]$f(x_1,x_2)=g(l_1(x_1)+l_2(x_2))$
  \item[$\bullet$]$f(x_1,x_2)=g(l_1(x_1)\cdot l_2(x_2))$
  \item[$\bullet$]$f(x_1,x_2)=g\left(\frac{l_1(x_1)+l_2(x_2)}{1-l_1(x_1)\cdot l_2(x_2)}\right)$.
  \end{enumerate}
  This result improves Elekes-R\'{o}nyai Theorem and it generalizes a result  of Raz-Sharir-Solymosi proven for $f\in\Real[x_1,x_2]$. Furthermore, an analogous result is proven for $f\in\Com(x_1,x_2)$ and $A_1,A_2$ subsets of $\Com$. 
  \end{abstract}
\maketitle

\section{Introduction}
In this paper we denote by $\Com,\Real,\Zet,\Pos$ and $\Nat$ the set of complex numbers, real numbers, integers, positive integers and nonnegative integers, respectively. For any field $\field$ and $x_1,x_2,\ldots, x_n$ algebraically independent variables over $\field$, we denote by $\field[x_1,x_2,\ldots, x_n]$  the set of polynomials in $x_1,x_2,\ldots, x_n$ with coefficients in $\field$. The quotient field of $\field[x_1,x_2,\ldots, x_n]$ will be denoted by $\field(x_1,x_2,\ldots, x_n)$ and its elements will be known as \emph{rational functions}. Let $p,q\in\field[x_1,x_2,\ldots, x_n]$. If $p$ and $q$ do not have common nonunit factors, we write $(p,q)=1$. For any $i\in\{1,2,\ldots, n\}$, denote by $\deg_{x_i}p$ the degree of $p$ with respect to $x_i$; the total degree of $p$ will be denoted by $\deg p$.  For any $f\in\field(x_1,x_2,\ldots, x_n)$ and $i\in\{1,2,\ldots, n\}$, if $f=\frac{p}{q}$ with $p,q\in\field[x_1,x_2,\ldots, x_n]$ and   $(p,q)=1$, then we write $\deg_{x_i} f:=\max\{\deg_{x_i}p,\deg_{x_i}q\}$ and   $\deg f:=\max\{\deg p,\deg q\}$. For any $f\in \field(x_1,x_2,\ldots, x_n)$ with $f=\frac{p}{q}$ where $p,q\in\field[x_1,x_2,\ldots, x_n]$ and   $(p,q)=1$, set $\Dom f:=\{\ab\in\field^n:\;q(\ab)\neq 0\}$.

Let $A_1$ and $A_2$ be nonempty finite subsets of $\Real$. If $A_1$ and $A_2$ are arithmetic (resp. geometric) progressions with the same common difference (resp. ratio), then $|A_1+A_2|=|A_1|+|A_2|-1$ (resp. $|A_1\cdot A_2|=|A_1|+|A_2|-1$). Therefore if $|A_1|=|A_2|$, and there are $g,l_1,l_2\in\Real[x]$ such that $l_1(A_1)$ and $l_2(A_2)$ are arithmetic (resp. geometric) progressions with the same common difference (resp. ratio), then $g(l_1(A_1)+l_2(A_2))=O(|A_1|)$ (resp. $g(l_1(A_1)\cdot l_2(A_2))=O(|A_1|)$). In \cite[Conj. 1]{Ele}, G. Elekes asked if $g(l_1(x_1)+l_2(x_2))$ and  $g(l_1(x_1)\cdot l_2(x_2))$ were the only families of polynomials $f$ in $\Real[x_1,x_2]$ having the property that $|f(A_1,A_2)|=O(|A_1|)$ for any $A_1,A_2$ subsets of $\Real$ such that $|A_1|=|A_2|$. Later Elekes and L. R\'{o}nyai were able to prove something stronger than Elekes' Conjecture.
\begin{thm}
\label{R1}
For any $c\in\Real$ and $d\in\Pos$ with $c\geq 1$, there is $c_1=c_{1,c,d}$ with the following property. For any  $f\in\Real(x_1,x_2)$ such that  $\deg f\leq d$, if there are $A_1$ and $A_2$  nonempty finite subsets of $\Real$ with $|A_1|=|A_2|>c_1$ such that $A_1\times A_2\subseteq \Dom f$ and $|f(A_1,A_2)|\leq c|A_1|$, then there are $g,l_1,l_2\in\Real(x)$ such that $f$ satisfies one of the following equalities.
  \begin{enumerate}
  \item[i)]$f(x_1,x_2)=g(l_1(x_1)+l_2(x_2))$.
  \item[ii)]$f(x_1,x_2)=g(l_1(x_1)\cdot l_2(x_2))$.
  \item[iii)]$f(x_1,x_2)=g\left(\frac{l_1(x_1)+l_2(x_2)}{1-l_1(x_1)\cdot l_2(x_2)}\right)$.
  \end{enumerate}
\end{thm}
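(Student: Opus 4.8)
The plan is to run the incidence-theoretic argument of Elekes--R\'onyai in the form sharpened by Raz--Sharir--Solymosi, adapted from polynomials to rational functions. Write $n:=|A_1|=|A_2|$. First I would reduce the hypothesis $|f(A_1,A_2)|\le c n$ to an energy estimate: since $A_1\times A_2\subseteq\Dom f$, Cauchy--Schwarz over the fibers of $f$ on the grid gives
\begin{equation*}
Q:=\#\bigl\{(a,a',b,b')\in A_1^2\times A_2^2:\ f(a,b)=f(a',b')\bigr\}\ \geq\ \frac{|A_1\times A_2|^2}{|f(A_1,A_2)|}\ \geq\ \frac{n^3}{c}.
\end{equation*}
Grouping the quadruples by the pair $(b,b')$, the quantity $Q$ counts incidences between the point set $P:=A_1\times A_1\subseteq\Real^2$, of size $n^2$, and the family $\Cc$ of plane algebraic curves $C_{b,b'}:=\{(x,x'):\ f(x,b)=f(x',b')\}$ indexed by $(b,b')\in A_2\times A_2$. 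Clearing denominators, each $C_{b,b'}$ is cut out by a polynomial of degree $O(d)$; discarding the diagonal (which contributes only $O(n^2)$ incidences) and splitting each $C_{b,b'}$ into its $O(1)$ irreducible components, we are left to incidence-count $n^2$ points against $O(n^2)$ irreducible curves of bounded degree, with $Q\gtrsim n^3/c$.

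Next I would apply a Szemer\'edi--Trotter/Pach--Sharir incidence bound for bounded-degree algebraic curves: for a family of distinct irreducible curves of bounded degree with two degrees of freedom (any two points lie on $O(1)$ of the curves) and bounded multiplicity (any two curves meet in $O(1)$ points), the number of incidences with $m$ points is $O(m^{2/3}q^{2/3}+m+q)$. With $m,q=O(n^2)$ this is $O(n^{8/3})$, which for $n$ exceeding a threshold $c_1=c_{1,c,d}$ contradicts $Q\ge n^3/c$. Hence for $n>c_1$ one of the structural hypotheses must fail: either many pairs $C_{b,b'},C_{e,e'}$ share a common irreducible component --- recall that two bounded-degree curves meeting in more than $O(d^2)$ points share a component, by B\'ezout --- or many of the curves pass through a common pair of points. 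Both are algebraic degeneracies of the slices $f(\cdot,b)$, and in either case, using that a polynomial of bounded degree vanishing on a grid whose side exceeds a bound depending only on $c$ and $d$ vanishes identically, one extracts a genuine one-parameter family of algebraic coincidences: after composing with one fixed rational function, the reduced slices $f(\cdot,b)$ are carried into one another by a one-dimensional algebraic group of fractional-linear substitutions.

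Translating this into a closed condition on $f$, I would show that it forces
\begin{equation*}
\frac{\partial^2}{\partial x_1\,\partial x_2}\,\log\!\left(\frac{\partial f/\partial x_1}{\partial f/\partial x_2}\right)\equiv 0,
\end{equation*}
equivalently that $(\partial f/\partial x_1)/(\partial f/\partial x_2)$ is a product of a function of $x_1$ alone and a function of $x_2$ alone. Integrating along the level curves of $f$ then yields $f=g\bigl(R(x_1)+S(x_2)\bigr)$ with $R',S'$ rational. The last step is the classification of the rational $f$ of this shape: rationality of $f$ forces $R$ and $S$ to be, in each variable, either a rational function (giving form i)), or $\tfrac{1}{a}\log$ of a rational function for a constant $a$, where $a\in\Real$ produces form ii) and a purely imaginary $a$ --- the nonsplit real form of the multiplicative group --- produces form iii) via the substitution $u\mapsto\tfrac{u+v}{1-uv}$; over $\Com$ the last two merge, which is why only i) and ii) survive there. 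I expect the main obstacle to be precisely this algebraic core: passing rigorously from the quantitative degeneracy to the exact differential/functional equation, and carrying out the trichotomous classification while keeping careful track of poles so that all substitutions stay within $\Dom f$ and over the correct ground field.
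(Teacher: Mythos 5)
The paper's proof of Theorem~\ref{R1} is a one-line citation to \cite[Thm.~2]{ER}; there is no in-paper argument. The natural comparison is therefore with the paper's proof of the strictly stronger Theorem~\ref{R4}, which subsumes Theorem~\ref{R1}. Your incidence-theoretic front end (Cauchy--Schwarz on the fibers to get $Q\gtrsim n^3/c$, viewing $Q$ as incidences between $A_1\times A_1$ and the bounded-degree curves $C_{b,b'}$, and invoking a Szemer\'edi--Trotter bound to cap $Q$ at $O_d(n^{8/3})$ when the curve family is in general position) is correct and is essentially the same skeleton the paper builds in Theorems~\ref{R21}--\ref{R23} and Lemma~\ref{R35}. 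That part works, and even overshoots: it would prove the $\Omega_d(n^{4/3})$ bound, not just superlinearity.

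The genuine gap is exactly where you flag it, and it is not a small one. You go from ``many $C_{b,b'}$ share a common irreducible component (or many curves pass through a common pair of points)'' to the closed PDE $\partial_{x_1}\partial_{x_2}\log(f_{x_1}/f_{x_2})\equiv 0$ in two sentences with no mechanism. The degeneracy is a statement about finitely many rational functions $f(\cdot,b)$, $b\in A_2$; promoting it to an identity in $\field(x_1,x_2)$, then to a second-order differential identity, then integrating to $f=g(R(x_1)+S(x_2))$ while controlling which primitives $R,S$ are admissible, is precisely the hard algebraic core, and none of it is argued. Worse, the final trichotomy over $\Real$ requires ruling out mixed cases (one of $R,S$ rational, the other logarithmic), distinguishing real from purely imaginary $a$ while staying inside $\Real(x_1,x_2)$, and tracking poles; this is not a formality. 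The paper avoids the PDE entirely: from a shared irreducible component it extracts a common L\"uroth generator for the slices (Lemma~\ref{R29}), lifts slice-wise factorizations to a global factorization $f=g(h(l_1(x_1),l_2(x_2)))$ with $\deg_{x_1}h,\deg_{x_2}h\le 1$ via elimination theory with explicit Gr\"obner-basis degree bounds (Theorem~\ref{R27}, Proposition~\ref{R28}, Lemma~\ref{R34}), and then obtains forms i)--iii) by a Jordan-form case analysis of the $2\times 2$ coefficient matrix of the fractional-bilinear $h$ (Remark~\ref{R37} and the proof of Theorem~\ref{R4}). That route is purely algebraic and sidesteps all of the analytic subtleties your sketch would have to confront.
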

\begin{proof}
See \cite[Thm. 2]{ER}.
\end{proof}
Therefore Elekes and R\'{o}nyai proved that $|f(A_1,A_2)|$ is superlinear unless $f$ has one of the  specific forms i)-iii) of Theorem \ref{R1}. Later,  O. E. Raz,  M. Sharir and J. Solymosi showed that if $f\in\Real[x_1,x_2]$, then  $|f(A_1,A_2)|=\Omega\left(|A_1|^{\frac{4}{3}}\right)$ unless $f$ has a very special form; more specifically they showed the following.

\begin{thm}
\label{R2}
Let $d\in\Nat$, $f\in\Real[x_1,x_2]$ be such that $\deg f\leq d$ and $A_1,A_2$ be nonempty finite subsets of $\Real$ such that $|A_1|=|A_2|$. Then
\begin{equation*}
|f(A_1,A_2)|=\Omega_d\left(|A_1|^{\frac{4}{3}}\right)
\end{equation*}
unless there are $g,l_1,l_2\in\Real[x]$ such that $f$ satisfies one of the following equalities.
  \begin{enumerate}
  \item[i)]$f(x_1,x_2)=g(l_1(x_1)+l_2(x_2))$.
  \item[ii)]$f(x_1,x_2)=g(l_1(x_1)\cdot l_2(x_2))$.
    \end{enumerate}
\end{thm}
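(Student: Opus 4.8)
Following the incidence-theoretic strategy, the plan is to reduce the whole statement to a single dichotomy governed by the rational function
\[
\Phi\;:=\;\frac{\partial^2}{\partial x_1\,\partial x_2}\,\log\!\left|\frac{\partial f/\partial x_1}{\partial f/\partial x_2}\right|\in\Real(x_1,x_2),
\]
which makes sense once one has disposed of the degenerate possibilities: if $f$ is constant, or depends on only one variable, it already has form i) with the missing $l_i$ constant, so assume $\deg_{x_1}f,\deg_{x_2}f\geq 1$, whence $\partial f/\partial x_1,\partial f/\partial x_2\neq 0$. Write $n:=|A_1|=|A_2|$ and $M:=|f(A_1,A_2)|$. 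If $\Phi\equiv 0$, the classical Elekes--R\'{o}nyai structure theorem yields $g,l_1,l_2\in\Real(x)$ with $f=g(l_1(x_1)+l_2(x_2))$, or $f=g(l_1(x_1)\cdot l_2(x_2))$, or $f=g\!\left(\frac{l_1(x_1)+l_2(x_2)}{1-l_1(x_1)l_2(x_2)}\right)$; since here $f\in\Real[x_1,x_2]$, a short additional argument promotes $g,l_1,l_2$ to polynomials and rules out the third, genuinely rational, shape, so that $f$ has form i) or ii) as required. Thus it remains to treat the case $\Phi\not\equiv 0$ and prove $M=\Omega_d(n^{4/3})$ there.

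For the quantitative part, introduce the quadruple count
\[
Q:=\bigl|\{(a,a',b,b')\in A_1^2\times A_2^2:\ f(a,b)=f(a',b')\}\bigr|.
\]
Sorting quadruples by the common value $v=f(a,b)$ and applying Cauchy--Schwarz to $\sum_{v\in f(A_1,A_2)}|\{(a,b):f(a,b)=v\}|=n^2$ gives $Q\geq n^4/M$. On the other hand, for each $(a,a')\in A_1^2$ the set $C_{a,a'}:=\{(y,y')\in\Real^2:\ f(a,y)=f(a',y')\}$ is a plane real algebraic curve of degree $O_d(1)$, and $Q=\sum_{(a,a')}\bigl|C_{a,a'}\cap(A_2\times A_2)\bigr|$, exhibiting $Q$ as an incidence count between the $n^2$ points of the grid $A_2\times A_2$ and these curves. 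The diagonal pairs $a=a'$ give curves through the line $\{y=y'\}$ and contribute only $O_d(n^2)$; likewise one discards the $O_d(n)$ pairs for which $C_{a,a'}$ contains a vertical or horizontal line, since these force $f(a,\cdot)$ or $f(a',\cdot)$ to be constant. The point is then that, because $\Phi\not\equiv 0$, the surviving family has $2$ degrees of freedom with bounded multiplicity: any two of its curves meet in $O_d(1)$ points, any two points of the plane lie on $O_d(1)$ of them, and each curve comes from $O_d(1)$ pairs $(a,a')$. Granting this, the Pach--Sharir point--curve incidence bound with $k=2$ gives $Q=O_d\!\left((n^2)^{2/3}(n^2)^{2/3}+n^2\right)=O_d(n^{8/3})$, which together with $Q\geq n^4/M$ yields $M=\Omega_d(n^{4/3})$.

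The main obstacle is precisely the verification of this $2$-degrees-of-freedom property, i.e.\ the identification of every way it can fail with the vanishing of $\Phi$. A failure of the second condition at points $(y_1,y_1'),(y_2,y_2')$ forces the two auxiliary curves $\{f(x_1,y_1)=f(x_1',y_1')\}$ and $\{f(x_1,y_2)=f(x_1',y_2')\}$ in the $(x_1,x_1')$-plane to share a component; on it $x_1'$ is an algebraic function $\chi$ of $x_1$ with $f(x_1,y_j)=f(\chi(x_1),y_j')$ for $j=1,2$, and one must show that having such ``symmetries'' for a positive-dimensional set of parameters $(y_1,y_1',y_2,y_2')$ forces $(\partial f/\partial x_1)/(\partial f/\partial x_2)$ to factor as a function of $x_1$ times a function of $x_2$, i.e.\ $\Phi\equiv 0$. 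The analysis of curves sharing a component, and of curves of high multiplicity, is of the same nature and rests on the same structural input. This is where the geometry of the hypersurface $\{f(x_1,x_2)=f(x_1',x_2')\}\subseteq\Real^4$ and of its coordinate projections really enters, along with a careful accounting of the negligible exceptional loci; the remaining ingredients---the reduction to non-degenerate $f$, the Cauchy--Schwarz step, and the application of Pach--Sharir---are routine.
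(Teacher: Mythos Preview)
The paper does not give an independent proof of this theorem: it is quoted as a known result and the proof consists of the citation ``See \cite[Cor.~3]{RSS}''. Your proposal is not so much a competing proof as a sketch of the Raz--Sharir--Solymosi argument itself, so in that sense you are on the same route as what the paper invokes.

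That said, what you have written is an outline, not a proof, and you say so yourself. The entire weight of the argument rests on the claim that when $\Phi\not\equiv 0$ the family $\{C_{a,a'}\}$ has two degrees of freedom with $O_d(1)$ multiplicity, and you explicitly leave this as ``the main obstacle'' without carrying it out. This is exactly the hard part of \cite{RSS}: showing that if many curves $C_{a,a'}$ share a component, or if many pass through two fixed points, then $\partial f/\partial x_1\big/\partial f/\partial x_2$ must split as $u(x_1)v(x_2)$. Your paragraph describing this step correctly identifies what has to be shown but supplies none of the algebra or geometry that actually shows it; as it stands the proposal is a plan, not a proof.

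There is a second, smaller gap. In the $\Phi\equiv 0$ branch you appeal to ``the classical Elekes--R\'{o}nyai structure theorem'' to get $g,l_1,l_2\in\Real(x)$ and then say ``a short additional argument'' upgrades them to polynomials and eliminates the third form. The structure theorem you need here is the pointwise one (if $f_{x_1}/f_{x_2}$ separates variables then $f$ has one of the three shapes), not Theorem~\ref{R1}, which is an asymptotic statement about specific sets $A_1,A_2$; you should be explicit about which you are using. And the upgrade to polynomial $g,l_1,l_2$ together with the exclusion of the tangent form for polynomial $f$ is genuinely a lemma with content (it appears in \cite{ER} and \cite{RSS}); it is not long, but it is not a throwaway line either.
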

\begin{proof}
See \cite[Cor. 3]{RSS}.
\end{proof}
Moreover, similar results have been obtained in $\Com$, see \cite{RSZ}, \cite{SZ}. The previous results have been generalized and several applications have been found, see \cite{Zee} for a nice survey in this topic. However, as it is already noted  by F. de Zeeuw in \cite[Prob. 1.2]{Zee}, it remained an open problem to show Theorem \ref{R2} when $f$ is a rational function. To achieve this goal, the main feature  is the following result which we think it is interesting on its own.
\begin{thm}
\label{R3}
Let $\field\in\{\Real,\Com\}$,  $d\in\Nat$, $f\in\field(x_1,x_2)$ be such that $\deg f\leq d$ and $A_1,A_2$ be nonempty finite subsets of $\field$ such that $|A_1|=|A_2|$ and $A_1\times A_2\subseteq \Dom f$. Then
\begin{equation*}
|f(A_1,A_2)|=\Omega_d\left(|A_1|^{\frac{4}{3}}\right)
\end{equation*}
unless there are $g,l_1,l_2\in\field(x)$  and $h\in\field(x_1,x_2)$ such that $\deg_{x_1}h,\deg_{x_2}h\leq 1$ and 
\begin{equation*}
f(x,y)=g(h(l_1(x_1),l_2(x_2))).
\end{equation*}
\end{thm}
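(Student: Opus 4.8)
The strategy is to reduce Theorem~\ref{R3} to the polynomial case of Raz--Sharir--Solymosi (Theorem~\ref{R2}) by exploiting the projective structure underlying the conclusion. Write $f=p/q$ with $p,q\in\field[x_1,x_2]$ and $(p,q)=1$, and consider the map $(x_1,x_2)\mapsto [p(x_1,x_2):q(x_1,x_2)]\in\Pro^1(\field)$ on $A_1\times A_2$. The key observation is that $|f(A_1,A_2)|$ is, up to an additive $O(1)$, the number of distinct values of this projective map. The plan is first to dispose of the degenerate cases where $\deg_{x_1}f=0$ or $\deg_{x_2}f=0$ (then $f$ already has the desired form trivially with $h$ a coordinate), so we may assume $\deg_{x_1}f,\deg_{x_2}f\geq 1$.

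The main idea is to post-compose with a suitable M\"obius transformation to turn $f$ into a polynomial without destroying the hypothesis $|f(A_1,A_2)|=o(|A_1|^{4/3})$. Concretely, I would argue as follows. Since $|f(A_1,A_2)|=o(|A_1|^{4/3})$ in particular forces $|f(A_1,A_2)|$ to be small, one can find a value $c_0\in\field$ (in fact all but finitely many $c_0$ work for the algebraic part of the argument, and a pigeonhole/generic choice handles the combinatorial part) such that the curve $\{q(x_1,x_2)=0\}$ meets $A_1\times A_2$ in few points and such that the Cremona-type substitution $f\mapsto 1/(f-c_0)$ — or more precisely a composition $\phi\circ f$ with $\phi\in\PGL_2(\field)$ chosen so that $\phi\circ f$ becomes a polynomial $\tilde f\in\field[x_1,x_2]$ after clearing a common factor — satisfies $|\tilde f(A_1',A_2')|=O(|f(A_1,A_2)|)$ on slightly shrunk sets $A_1'\subseteq A_1$, $A_2'\subseteq A_2$ with $|A_1'|=|A_2'|=\Omega(|A_1|)$. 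Here the shrinking is to excise the finitely many bad lines $x_1=a$, $x_2=b$ where the denominators vanish or drop degree; since each such line removes $O(1)$ elements, balancing $|A_1'|=|A_2'|$ is easy. Then Theorem~\ref{R2} applied to $\tilde f$ on $A_1'\times A_2'$ yields either the growth bound $|\tilde f(A_1',A_2')|=\Omega_d(|A_1'|^{4/3})=\Omega_d(|A_1|^{4/3})$ — contradiction — or $\tilde f(x_1,x_2)=\tilde g(l_1(x_1)+l_2(x_2))$ or $\tilde g(l_1(x_1)l_2(x_2))$ for some $\tilde g,l_1,l_2\in\field[x]$. In either surviving case, $f=\phi^{-1}\circ\tilde f$ has the form $g(h(l_1(x_1),l_2(x_2)))$ with $g=\phi^{-1}\circ\tilde g\in\field(x)$ and $h(u,v)=u+v$ or $h(u,v)=uv$, which has $\deg_{x_1}h,\deg_{x_2}h\leq 1$ as required.

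The hard part will be ensuring that the Möbius reduction actually produces a \emph{polynomial} $\tilde f$ in both variables simultaneously, rather than merely a rational function with smaller denominator. This is where one genuinely uses that $\field$ is a field with the Elekes--Rónyai rigidity already available in the rational setting (Theorem~\ref{R1}): one can first invoke Theorem~\ref{R1} to learn that either $|f(A_1,A_2)|$ is superlinear (and then, combined with a more careful second-moment / incidence argument in the spirit of \cite{RSS}, one should be able to push the exponent up to $4/3$ directly for rational $f$), or $f$ already has one of the forms i)--iii). Forms i) and ii) are immediately of the type $g(h(l_1,l_2))$ with $h=u+v$ or $h=uv$. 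Form iii) is the genuinely new case, but there $f=g\!\left(\frac{l_1(x_1)+l_2(x_2)}{1-l_1(x_1)l_2(x_2)}\right)$, and the function $h(u,v)=\frac{u+v}{1-uv}$ visibly satisfies $\deg_u h=\deg_v h=1$, so this too is of the asserted shape. Thus the logical skeleton is: (a) if $f$ is not of one of the three Elekes--Rónyai forms, upgrade Theorem~\ref{R1}'s superlinear bound to $\Omega_d(|A_1|^{4/3})$ via the polynomial-partitioning incidence machinery of \cite{RSS} adapted to rational maps; (b) if $f$ is of form i), ii), or iii), read off $h$ directly. The genuine technical obstacle is step (a): one must redo the Raz--Sharir--Solymosi incidence count — bounding collinear triples on the surface $\{z=f(x,y)\}$ — in a setting where $f$ has poles, which requires controlling the contribution of points near the pole locus; projective coordinates and a careful generic choice of the affine chart (equivalently, of $c_0$ above) are the tools that make this go through.
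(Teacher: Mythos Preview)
Your proposal has a genuine gap in both branches of the plan.

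\textbf{The M\"obius reduction does not work.} For a generic $f=p/q$ with $(p,q)=1$, no choice of $\phi\in\mathrm{PGL}_2(\field)$ makes $\phi\circ f$ a polynomial. For instance $1/(f-c_0)=q/(p-c_0q)$, and since $(p,q)=1$ forces $(q,\,p-c_0q)=1$, there is nothing to ``clear'': this is a polynomial only when $p-c_0q$ is constant, i.e.\ when $f$ was already a M\"obius image of a polynomial to begin with. So the reduction to Theorem~\ref{R2} collapses except in a degenerate subclass of rational functions, and you cannot invoke the polynomial case as a black box.

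\textbf{The alternative via Theorem~\ref{R1} is not a proof.} Once the M\"obius idea fails, your plan becomes: (b) if $f$ has one of the Elekes--R\'onyai forms i)--iii), read off $h$; (a) otherwise, Theorem~\ref{R1} gives a superlinear lower bound, and then ``upgrade'' to $\Omega_d(|A_1|^{4/3})$ by adapting \cite{RSS} to rational maps. Step (b) is fine. But step (a) is precisely the content of Theorem~\ref{R3}: the whole difficulty is that the \cite{RSS} incidence argument is not known to carry over to rational $f$ by a generic-chart or pole-excision trick, and you have not supplied the adaptation. (Note also that Theorem~\ref{R1} as stated is only for $\field=\Real$, so for $\field=\Com$ even the reduction to step (a) is unavailable.)

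The paper's actual proof does \emph{not} reduce to the polynomial case. It rebuilds the incidence argument directly for rational $f$: it introduces a ``dominating'' $g$ with $f=g\circ\hat h$ (Lemma~\ref{R33}), studies the curves $C^{(i)}_{\hat h,\mathbf a}$, and splits into two cases. If no irreducible component is shared by many such curves, a Szemer\'edi--Trotter-type bound (Theorem~\ref{R23}) yields the $4/3$ exponent. If some component \emph{is} shared, an algebraic lemma (Lemma~\ref{R29}, via L\"uroth) forces simultaneous factorizations $\hat h(y,a)=h_a(l(y))$, and a lifting result based on elimination theory (Proposition~\ref{R28}, Lemma~\ref{R34}) produces the inner $h$ with $\deg_{x_1}h\le 1$; a second pass handles $x_2$. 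None of this machinery is bypassed by a M\"obius substitution.
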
 
Using Theorem \ref{R3} and other ideas, we are able to prove the main results of this paper. 
\begin{thm}
\label{R4}
Let  $d\in\Nat$, $f\in\Real(x_1,x_2)$ be such that $\deg f\leq d$ and $A_1,A_2$ be nonempty finite subsets of $\field$ such that $|A_1|=|A_2|$ and $A_1\times A_2\subseteq \Dom f$. Then
\begin{equation*}
|f(A_1,A_2)|=\Omega_d\left(|A_1|^{\frac{4}{3}}\right)
\end{equation*}
unless there are $g,l_1,l_2\in\Real(x)$ such that $f$ satisfies one of the following equalities.
  \begin{enumerate}
  \item[i)]$f(x_1,x_2)=g(l_1(x_1)+l_2(x_2))$.
  \item[ii)]$f(x_1,x_2)=g(l_1(x_1)\cdot l_2(x_2))$.
  \item[iii)]$f(x_1,x_2)=g\left(\frac{l_1(x_1)+l_2(x_2)}{1-l_1(x_1)\cdot l_2(x_2)}\right)$.
  \end{enumerate}
\end{thm}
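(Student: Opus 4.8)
The plan is to obtain Theorem \ref{R4} from Theorem \ref{R3} by a purely algebraic normalization of the intermediate function. Applying Theorem \ref{R3} with $\field=\Real$, we may assume that $|f(A_1,A_2)|$ is not $\Omega_d(|A_1|^{4/3})$, so there are $g,l_1,l_2\in\Real(x)$ and $h\in\Real(x_1,x_2)$ with $\deg_{x_1}h,\deg_{x_2}h\le1$ and $f(x_1,x_2)=g(h(l_1(x_1),l_2(x_2)))$. If $f$ is constant or depends on only one variable, it already has form i) (take one of $l_1,l_2$ to be $0$ and adjust $g$), so we may assume $f$ is non-constant in each variable; then $l_1,l_2$ are non-constant and $\deg_{x_1}h=\deg_{x_2}h=1$. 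It therefore suffices to prove the following. \emph{Claim: if $h\in\Real(x_1,x_2)$ has $\deg_{x_1}h=\deg_{x_2}h=1$, then there are M\"obius transformations $\mu,\nu,\psi\in\mathrm{PGL_2}(\Real)$ and $h_0\in\{x_1+x_2,\ x_1x_2,\ \tfrac{x_1+x_2}{1-x_1x_2}\}$ with $h(\mu(x_1),\nu(x_2))=\psi(h_0(x_1,x_2))$.} Indeed, granting the Claim and setting $\tilde g:=g\circ\psi$, $\tilde l_1:=\mu^{-1}\circ l_1$ and $\tilde l_2:=\nu^{-1}\circ l_2$ (all of which lie in $\Real(x)$), one gets $f=\tilde g(h_0(\tilde l_1(x_1),\tilde l_2(x_2)))$, which is exactly one of i), ii), iii).

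To prove the Claim I would pass to the classical ``pencil of conics'' picture. For each value $t$, the level set $\{h=t\}$ is the graph $v=M_t(u)$ of a M\"obius map whose $2\times 2$ matrix $M_t=P+tQ$ depends affinely on $t$; a routine computation using $\deg_{x_1}h=\deg_{x_2}h=1$ shows $\det M_t\not\equiv0$, so this is indeed a graph for generic $t$. Projectivizing, $t\mapsto[M_t]$ parametrizes a line $L$ in $\Pro(\Mat)\cong\Pro^3(\Real)$; let $\Zc\subseteq\Pro^3(\Real)$ be the determinant quadric, which is the Segre image of $\Pro^1\times\Pro^1$ under $(\mathbf{a},\mathbf{b})\mapsto[\mathbf{a}\mathbf{b}^{T}]$. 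Because $h$ genuinely depends on both variables one checks $L\not\subseteq\Zc$: a line contained in $\Zc$ would force every $M_t$ to have rank $\le1$, i.e. every level set to be a horizontal or vertical line, so $h$ would depend on one variable. The substitution $(u,v)\mapsto(\mu(u),\nu(v))$ replaces $M_t$ by $NM_tN'$, hence acts on $L$ as $\mathrm{PGL_2}(\Real)\times\mathrm{PGL_2}(\Real)$ acting on $\Pro^3(\Real)$ by left and right multiplication, while two rational functions with identical level curves differ by a M\"obius map in the target (this is the role of $\psi$). So the Claim reduces to: \emph{under this action of $\mathrm{PGL_2}(\Real)\times\mathrm{PGL_2}(\Real)$, every line $L\not\subseteq\Zc$ is equivalent to the level-set pencil of one of $x_1x_2$, $x_1+x_2$, $\tfrac{x_1+x_2}{1-x_1x_2}$}, whose pencils have matrices $\begin{pmatrix}0&t\\ 1&0\end{pmatrix}$, $\begin{pmatrix}1&t\\ 0&1\end{pmatrix}$, $\begin{pmatrix}-1&t\\ t&1\end{pmatrix}$ and meet $\Zc$ in, respectively, two distinct real points, one double real point, and a conjugate pair of non-real points.

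The heart of the matter is this last reduction, which I would settle by the three possible types of $L\cap\Zc$ (over $\Com$ a line $L\not\subseteq\Zc$ meets the quadric in two points counted with multiplicity). If these are two distinct real points, they are rank-one matrices $[\mathbf{a}_1\mathbf{b}_1^{T}],[\mathbf{a}_2\mathbf{b}_2^{T}]$, and $L\not\subseteq\Zc$ forces $\{\mathbf{a}_1,\mathbf{a}_2\}$ and $\{\mathbf{b}_1,\mathbf{b}_2\}$ to be bases of $\Real^2$ (otherwise every matrix on $L$ would have rank $\le1$); sending these bases to the standard one by the left and right factors moves $L$ to the diagonal pencil, i.e. the pencil of $x_1x_2$. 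If $L$ is tangent to $\Zc$ at a rank-one matrix, we normalize that point, use the second-order vanishing of $\det$ along $L$ to pin down a second defining matrix, and use the stabilizer of the tangency point (a pair of triangular matrices) to reach the pencil of $x_1+x_2$. If $L\cap\Zc$ is a conjugate pair $[\mathbf{a}\mathbf{b}^{T}],[\bar{\mathbf{a}}\bar{\mathbf{b}}^{T}]$ of non-real points, then again $\{\mathbf{a},\bar{\mathbf{a}}\}$ and $\{\mathbf{b},\bar{\mathbf{b}}\}$ are independent over $\Com$, and since $\mathrm{PGL_2}(\Real)$ acts transitively on conjugate pairs of non-real points of $\Pro^1(\Com)$ we may normalize both pairs to $\{[1:i],[1:-i]\}$; a short computation then identifies $L$ with the pencil of $\tfrac{x_1+x_2}{1-x_1x_2}$. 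The main obstacle is exactly this case analysis: one must check that $L\not\subseteq\Zc$ is precisely the condition preventing the intersection points from sharing a row or column space, and the third case is the one that genuinely uses $\field=\Real$ (over $\Com$ it would merge with the first), while the remainder is bookkeeping with the pencil $M_t=P+tQ$.
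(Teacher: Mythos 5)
Your reduction to Theorem \ref{R3} and the subsequent analysis of a rational function $h$ with $\deg_{x_1}h=\deg_{x_2}h=1$ are on the right track, and the overall plan is correct, but you package the final classification in a genuinely different way than the paper does. The paper writes $\dot h(x_1,x_2)=\frac{a_1x_1x_2+a_2x_2+b_1x_1+b_2}{a_3x_1x_2+a_4x_2+b_3x_1+b_4}$, shifts $x_2$ (Remark \ref{R36}) so that $Y=\left(\begin{smallmatrix}b_1&b_2\\ b_3&b_4\end{smallmatrix}\right)$ is invertible, views $x_1\mapsto\dot h(x_1,e)$ as the M\"obius map $g_{eX+Y}$ and factors $eX+Y=YH(eJ+I)H^{-1}$ by taking the real Jordan form $J$ of $Z=Y^{-1}X$ (Remark \ref{R37}); the three Jordan types produce the three canonical forms directly, after patching the pointwise identities into a rational identity via Remark \ref{R39}. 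You instead parametrize the \emph{level curves} $\{h=t\}$ as graphs of M\"obius maps $M_t=P+tQ$, pass to the line $L\subset\Pro^3$ through $[P],[Q]$, and classify $L$ modulo the $\mathrm{PGL}_2\times\mathrm{PGL}_2$ left/right action by its intersection with the Segre (determinant) quadric $\Zc$: two distinct real points, one double point, or a conjugate pair. This is the same trichotomy the paper obtains, since the intersection $\{[s:r]:\det(sP+rQ)=0\}$ is governed by the eigenvalue structure of $P^{-1}Q$ (exactly as the paper's $\det(eX+Y)$ is governed by the eigenvalues of $Y^{-1}X$), but your presentation is more invariant and avoids the ad hoc shift of Remark \ref{R36} and the patching Remark \ref{R39}, at the cost of requiring some care in turning ``same pencil of level curves'' into the precise identity $h(\mu(x_1),\nu(x_2))=\psi(h_0(x_1,x_2))$, and in the tangent case, where the stabilizer computation is only sketched. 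Both approaches buy the same thing and only differ by the choice of which two-dimensional pencil of $2\times2$ matrices to normalize; the paper's is more hands-on and elementary, yours is more geometric and makes it transparent why the third (tangent-to-$\Zc$ / conjugate-pair) case only appears over $\Real$. Minor slip: the pencil for $x_1+x_2$ should be $\left(\begin{smallmatrix}-1&t\\ 0&1\end{smallmatrix}\right)$, not $\left(\begin{smallmatrix}1&t\\ 0&1\end{smallmatrix}\right)$; this does not affect the argument.
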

Since $\Com$ is algebraically closed and $\Real$ is not, the case iii) of Theorem \ref{R4} cannot appear in the analogous result for the complex numbers. 
\begin{thm}
\label{R5}
Let  $d\in\Nat$, $f\in\Com(x_1,x_2)$ be such that $\deg f\leq d$ and $A_1,A_2$ be nonempty finite subsets of $\Com$ such that $|A_1|=|A_2|$ and $A_1\times A_2\subseteq \Dom f$. Then
\begin{equation*}
|f(A_1,A_2)|=\Omega_d\left(|A_1|^{\frac{4}{3}}\right)
\end{equation*}
unless there are $g,l_1,l_2\in\Com(x)$ such that $f$ satisfies one of the following equalities.
  \begin{enumerate}
  \item[i)]$f(x_1,x_2)=g(l_1(x_1)+l_2(x_2))$.
  \item[ii)]$f(x_1,x_2)=g(l_1(x_1)\cdot l_2(x_2))$.
   \end{enumerate}
\end{thm}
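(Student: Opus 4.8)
The plan is to deduce Theorem \ref{R5} from Theorem \ref{R3} together with a classification of bilinear rational functions over $\Com$. First I would invoke Theorem \ref{R3}: either $|f(A_1,A_2)|=\Omega_d(|A_1|^{4/3})$ and we are done, or there exist $g,l_1,l_2\in\Com(x)$ and $h\in\Com(x_1,x_2)$ with $\deg_{x_1}h,\deg_{x_2}h\leq 1$ such that $f(x_1,x_2)=g(h(l_1(x_1),l_2(x_2)))$. The task then reduces to the following purely algebraic statement: every such $h$ can, after composing on the outside with a fractional--linear map $\mu$ and on the inside with a fractional--linear map $\nu_i$ in each variable, be brought to $x_1+x_2$, to $x_1x_2$, to a rational function of $x_1$ alone, to one of $x_2$ alone, or to a constant. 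Granting this, set $l_i':=\nu_i\circ l_i$ and $\tilde g:=g\circ\mu^{-1}$; since $\mu,\nu_1,\nu_2$ have coefficients in $\Com$, these lie again in $\Com(x)$, and $f$ is rewritten as $\tilde g(l_1'(x_1)+l_2'(x_2))$ (case i)), as $\tilde g(l_1'(x_1)\cdot l_2'(x_2))$ (case ii)), or, in the degenerate sub-cases, as a function of a single variable or a constant --- which falls under case i) upon choosing $l_2$ (or $l_1$, or both) equal to $0\in\Com(x)$ and absorbing everything else into $g$.

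To prove the algebraic statement I would encode $h$ by a pencil of $2\times 2$ matrices. Write $h=P/Q$ with $P,Q\in\Com[x_1,x_2]$ coprime and of degree $\leq 1$ in each variable, and regard $h$, for a formal parameter $x_1$, as a fractional--linear transformation of $x_2$ with matrix $M(x_1)=M_0+x_1M_1$ for some $M_0,M_1\in\Mat$. Under this dictionary, post-composing $h$ with a fractional--linear map acts by left multiplication on $(M_0,M_1)$, pre-composing in $x_2$ acts by right multiplication, and reparametrizing in $x_1$ replaces $\{M_0,M_1\}$ by another basis of its span. Thus the statement is precisely the classification of $2\times 2$ matrix pencils $\lambda M_0+\mu M_1$ up to left--right $\GL\times\GL$-equivalence --- a small instance of Kronecker's theory, which one can also do by hand. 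Running through the normal forms: if the pencil is one-dimensional or singular (that is, $\det(\lambda M_0+\mu M_1)\equiv 0$), then $h$ depends on only one variable or is constant; if it is a regular pencil with two distinct eigenvalues, one normalizes $h$ to $x_1x_2$; and if it is a regular pencil with a repeated eigenvalue --- necessarily a single $2\times2$ Jordan block, since the split alternative would force $M_0,M_1$ proportional --- then $h$ normalizes to $x_1+x_2$. Since $\Com$ is algebraically closed, the binary quadratic form $\det(\lambda M_0+\mu M_1)$ always splits over $\Com$, so no further cases arise; this is exactly where the third Elekes--R\'{o}nyai form disappears: over $\Real$ it comes from a pencil with irreducible quadratic determinant (an $\mathrm{SO}_2(\Real)$-type rotation), whereas over $\Com$ that pencil has two distinct eigenvalues and is already subsumed by the $x_1x_2$ case --- indeed the level curves of $\frac{x_1+x_2}{1-x_1x_2}$ degenerate at $x_1=x_2=\pm i$, normalizing it to $x_1x_2$.

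The step I expect to be the main obstacle is not any single computation but the bookkeeping of the degenerate branches of the pencil classification --- a common kernel or cokernel of the pencil, scalar pencils, and the interplay with the possibility that one of $l_1,l_2$ (or $h$, or $g$) is already constant --- making sure each branch is routed into one of the three forms allowed by the theorem, and verifying throughout that the normalizing fractional--linear maps can be chosen with coefficients in $\Com$, so that the resulting $g,l_1,l_2$ stay in $\Com(x)$. The latter is automatic over an algebraically closed field, and it is precisely this feature that makes the complex statement cleaner than its real analogue (Theorem \ref{R4}).
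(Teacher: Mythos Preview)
Your proposal is correct and follows essentially the same route as the paper: invoke Theorem~\ref{R3}, then classify the bilinear $h$ via $2\times 2$ matrix theory over~$\Com$, where algebraic closedness eliminates the third Elekes--R\'onyai form. The paper carries out the classification by specializing $x_2=e$, writing $h(x_1,e)=g_{eX+Y}(x_1)$, translating so that $Y\in\GL(\Com)$, and applying the Jordan decomposition of $Y^{-1}X$ (Remark~\ref{R37}~ii)); your pencil formulation---Kronecker normal forms under the action of left multiplication, right multiplication, and change of basis in the pencil parameter---is the same computation in different packaging, with the mild bonus that it handles the degenerate and singular cases uniformly and avoids the translation trick (Remark~\ref{R36}) and the specialize-then-lift step (Remark~\ref{R39}).
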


We sketch the proofs of the main results of this paper. First of all we must say that a lot of the ideas used in this paper were taken from Elekes-R\'{o}nyai's paper \cite{ER} and Raz-Sharir-Solymosi's paper \cite{RSS}.  We start with Theorem \ref{R3}. The key ingredient in the proof of Theorem \ref{R3} is Lemma \ref{R35}. This lemma roughly says that, with the assumptions and notation of Theorem \ref{R3}, one of the following claims must  be true:
 \begin{enumerate}
    \item[i)]$|f(A_1,A_2)|=\Omega_d\left(|A_1|^{\frac{4}{3}}\right)$.
    \item[ii)]There are $g,l_1\in\field(x)$ and $h\in\field(x_1,x_2)$ such that $\deg_{x_1}h\leq 1$ and $f(x_1,x_2)=g(h(l_1(x_1),x_2))$.
    \item[iii)]There are $g,l_2\in\field(x)$ and $h\in\field(x_1,x_2)$ such that $\deg_{x_2}h\leq 1$ and $f(x_1,x_2)=g(h(x_1,l_2(x_2)))$.
    \end{enumerate}
   We overview the proof of Lemma \ref{R35}.   Using some ideas of Elekes and Ronyai, see Lemma \ref{R33},  we can prove that $f(x_1,x_2)=g(\fu(x_1,x_2))$ for some $\fu\in\field(x_1,x_2)$ and $g\in\field(x)$ satisfying that for big subsets $B_1$ of $A_1\times A_1$ and $B_2$ of $A_2\times A_2$, we have  that for any pair $(a,a')\in B_1$ (resp. $(a,a')\in B_2$) the pair $\{f(a,x_2),f(a',x_2)\}$ (resp.  $\{f(x_1,a),f(x_1,a')\}$) cannot be factorized through a rational function $g'$ with $\deg g'>\deg g$. For any $\ab_2=(a_2,a'_2)\in A_2\times A_2$ (resp. $\ab_1=(a_1,a'_1)\in A_1\times A_1$), let $C^{(1)}_{\fu,\ab_2}$ (resp. $C^{(2)}_{\fu,\ab_1}$) be the  curve in $\field^2$ defined by the equation $\fu(x,a_2)-\fu(y,a'_2)=0$ (resp. $\fu(a_1,x)-\fu(a'_1,y)=0$), see Section 2 for the precise definitions. The proof of Lemma \ref{R35} depends on whether there exists an irreducible algebraic curve contained in  many curves of $\left\{C^{(1)}_{\fu,\ab_2}\right\}_{\ab_2\in B_2}$  or $\left\{C^{(2)}_{\fu,\ab_1}\right\}_{\ab_1\in B_1}$, or there is not such irreducible curve. If there is not an irreducible curve as above, then we use some ideas of Raz, Sharir and Solymosi  to conclude i) (this is done applying a Szemer\'{e}di-Trotter type result based on  a statement of Solymosi and de Zeeuw, see Theorem \ref{R23}). If there is an irreducible  curve as above, then we conclude (using some applications of Elimination Theory and L\"{u}roth's Theorem) that ii) or iii) is true.   The proof of Theorem \ref{R3} is similar to the proof of Lemma \ref{R35}. If $f$ satisfies i) in Lemma \ref{R35}, we are done. If $f$ satisfies ii) (the case iii) is done symmetrically), then $f(x_1,x_2)=g(\fu(l_1(x_1),x_2))$ for some $g,l_1\in\field(x)$ and  $\fu(x_1,x_2)$ with $\deg_{x_1} \fu\leq 1$. Thus, in this case as a consequence of Lemma \ref{R17} and some trivial reductions, most of the algebraic curves $\left\{C^{(1)}_{\fu,\ab_2}\right\}_{\ab_2\in A_2\times A_2}$  are irreducible and thus it remains to conclude the proof of Theorem \ref{R3} depending on whether there exists an irreducible algebraic curve contained in  many curves of $\left\{C^{(2)}_{\fu,\ab_1}\right\}_{\ab_1\in A_1\times A_1}$ or there is not such irreducible curve. This is done as in Lemma \ref{R35}, and then the proof of Theorem \ref{R3} is completed. The proofs of Theorem \ref{R4} and Theorem \ref{R5} are almost the same so we just sketch the one of Theorem \ref{R4}. The proof of Theorem \ref{R4} relies in Theorem \ref{R3}. If $|f(A_1,A_2)|=\Omega_d\left(|A_1|^{\frac{4}{3}}\right)$, there is nothing to prove; assume that this is not the case. Then  $f(x_1,x_2)=\gd(\fd(\ld(x_1),\ldd(x_2)))$ for some $\gd,\ld,\ldd\in\field(x)$ and  $\fd(x_1,x_2)$ with $\deg_{x_1} \fd\leq 1$ and $\deg_{x_2}\fd \leq 1$ from Theorem \ref{R3}. Thus there are $a_1,a_2,a_3,a_4,b_1,b_2,b_3,b_4\in\Real$ such that
   \begin{equation*}
\fd(x_1,x_2)=\frac{a_1x_1x_2+a_2x_2+b_1x_1+b_2}{a_3x_1x_2+a_4x_2+b_3x_1+b_4};
 \end{equation*}
 we may assume that $\left( \begin{array}{ccc}
b_1&b_2\\
b_3&b_4
 \end{array} \right)$ is an invertible matrix. The proof is concluded considering the possibilities of the Jordan decomposition of the matrix\\ $\left( \begin{array}{ccc}
b_1&b_2\\
b_3&b_4
 \end{array} \right)^{-1}\left( \begin{array}{ccc}
a_1&a_2\\
a_3&a_4
 \end{array} \right)$ and then defining the functions $g,l_1,l_2\in\Real(x)$ and $h\in\Real(x_1,x_2)$ satisfying i),ii) or iii).

We describe the organization of this paper. In Section 2 we  state  some auxiliary results that will be needed in the next sections. An important part in the proof of Theorem \ref{R3} is to show that if $f(x_1,a)=g(h_a(x_1))$ (resp. $f(x_1,a)=h_a(g(x_1))$) for a family of functions $\{h_a\}_{a\in A}$ in $\field(x)$ with $A$ big enough, then there is $h(x_1,x_2)\in\field(x_1,x_2)$ such that $f(x_1,x_2)=g(h(x_1,x_2))$ (resp. $f(x_1,x_2)=h(g(x_1),x_2)$); this is done using Elimination Theory  in Section 3. Based on some ideas used by Elekes and Ronyai in \cite{ER}, we study the elements $a,a'\in A_2$ (resp. $a,a'\in A_1$) and the functions $g,h_a,h_a'$ such that $f(x_1,a)=g(h_a(x_1))$ and $f(x_1,a')=g(h_{a'}(x_1))$ (resp. $f(a,x_2)=g(h_a(x_2))$ and $f(a',x_2)=g(h_{a'}(x_2))$) when  $\deg g$ is maximal; this is done in Section 4.  The proof of  Theorem \ref{R3} is completed in Section 5. The proofs of the main results are concluded in Section 6.

\section{Preliminaries}
The purpose of this section is to state some results and some direct consequences that will be needed  in the forthcoming sections. We will need some results from Algebra (specially Elimination Theory), Graph Theory and Incidence Theory. In this section $\field$ is a field.

\subsection*{Algebraic preliminaries }

 We start with Lüroth's Theorem.
\begin{thm}
\label{R6}
Let $x$ be an algebraically independent variable over $\field$. For any field $\NF$ such that $\field\subseteq \NF\subseteq \field(x)$, there is $y\in \NF$ such that $\NF=\field(y)$.
\end{thm}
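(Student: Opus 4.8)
The plan is to follow the classical argument via resultants and the degree of the field extension $\field(x)/\NF$. First I would dispose of the trivial case $\NF=\field$ (take $y\in\field$ any element) and assume $\field\subsetneq\NF$. Pick any $y\in\NF\setminus\field$; since $x$ is algebraic over $\field$ is false in general — rather, $x$ is transcendental over $\field$, but $x$ \emph{is} algebraic over $\field(y)\subseteq\NF$, because $y=p(x)/q(x)$ for coprime $p,q\in\field[x]$ not both constant, so $x$ is a root of the nonzero polynomial $P(T):=y\,q(T)-p(T)\in\NF[T]$. Hence $[\field(x):\NF]\leq[\field(x):\field(y)]<\infty$, so the extension $\field(x)/\NF$ is finite; write $n:=[\field(x):\NF]$ and let $F(T)=T^n+c_{n-1}T^{n-1}+\cdots+c_0\in\NF[T]$ be the minimal polynomial of $x$ over $\NF$. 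The goal is to produce a single generator: I claim $\NF=\field(z)$ for $z$ equal to any non-constant coefficient $c_i$, after suitable normalization.

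The key steps, in order: (1) Since each $c_i\in\NF\subseteq\field(x)$, clear denominators to get a primitive polynomial relation; concretely, there is a least common denominator so that the polynomial $G(T)\in\field[x][T]$ obtained by multiplying $F(T)$ through has content $1$ in $\field[x]$ and still has $x$ as a root. (2) Observe that some coefficient $c_j$ is not in $\field$ (else $F$ would have coefficients in $\field$, forcing $x$ algebraic over $\field$, contradiction); write $z=c_j=r(x)/s(x)$ with $r,s\in\field[x]$ coprime and $m:=\max(\deg r,\deg s)\geq 1$. (3) Since $z\in\NF$ we have $\field(z)\subseteq\NF$ and $[\field(x):\field(z)]=m$ (standard: $x$ is a root of $z\,s(T)-r(T)$, which is irreducible over $\field(z)$ of degree $m$). (4) The crucial comparison: show $[\field(x):\NF]\geq m$, which combined with $\NF\supseteq\field(z)$ and $[\field(x):\field(z)]=m$ forces $\NF=\field(z)$. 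For (4), use that $F(T)$ divides $z\,s(T)-r(T)$ in $\NF[T]$ (since $x$ is a root of the latter and $F$ is the minimal polynomial), so $z\,s(T)-r(T)=F(T)H(T)$ for some $H\in\NF[T]$; then a Gauss-lemma / content argument over the polynomial ring $\field[x]$ — viewing everything in $\field[x][T]$ after clearing denominators — shows that $n=\deg_T F\geq m=\deg_T\bigl(z\,s(T)-r(T)\bigr)$, because the $x$-degree of the primitive associate of $F$ is at least $m$ while it is also at most $n$. The cleanest way: if $G(x,T)\in\field[x][T]$ is the primitive form of $F$, then $\deg_T G=n$ and one shows $\deg_x G\leq n$ as well; but $G(x,T)$ must (up to units) be divisible by $s(T)x-r(T)$ in $\field[x,T]$...

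The main obstacle — and the step I would spend the most care on — is precisely this content/primitivity bookkeeping in $\field[x,T]$: one must argue that when $F(T)$ is written with a common denominator $d(x)\in\field[x]$ as $F(T)=G(x,T)/d(x)$ with $G$ primitive in $T$ over $\field[x]$, the two-variable polynomial $G(x,T)$ is symmetric in its bidegrees in the sense that $\deg_x G\leq\deg_T G=n$, and that $G(x,T)$ is (up to a scalar) divisible by $x\,s(T)-r(T)$. Combining $\deg_x\bigl(x\,s(T)-r(T)\bigr)=1$ applied suitably and comparing $T$-degrees after extracting the content in $x$ yields $m\leq n$. I would present this via the standard trick: $G(x,T)$ has no factor in $\field[x]$ alone (primitivity) and no factor in $\field[T]$ alone (by the same argument with the roles of $x$ and $T$ swapped, using that $\deg_x F$ in an appropriate normalization equals $\deg_T$-degree by a symmetry of the construction), hence its irreducible factors all genuinely involve both variables; then $x\,s(T)-r(T)\mid G(x,T)$ gives $\deg_T G\geq\deg_T\bigl(x\,s(T)-r(T)\bigr)=m$, i.e. $n\geq m$, and we conclude $\NF=\field(z)$ with $y:=z$. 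I expect the write-up of this bidegree symmetry to be the only genuinely delicate point; everything else is routine field theory.
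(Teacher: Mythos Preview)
The paper does not prove L\"uroth's theorem; it simply cites \cite[Thm.~22.19]{Mor}. Your sketch follows the classical elementary argument, which is the standard route and entirely appropriate here.

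However, your step (4) is tangled at exactly the crucial point. From $F(T)\mid z\,s(T)-r(T)$ in $\NF[T]$ you get $n\le m$ by comparing $T$-degrees, not $n\ge m$ as you write; and the tower $\field(z)\subseteq\NF\subseteq\field(x)$ already gives $m\ge n$, so this adds nothing. The real work is the reverse inequality, and it does not go via ``$\deg_x G\le n$'' as you propose (there is no direct reason for that bound), nor via $s(T)x-r(T)\mid G(x,T)$ (that polynomial does not even vanish at $T=x$, and in any case the divisibility runs the other way). The correct line is: from $F\mid z\,s(T)-r(T)$ in $\field(x)[T]$ and Gauss's lemma (using that $G$ is primitive over $\field[x]$) one deduces $G(x,T)\mid g(x,T)$ in $\field[x,T]$, where $g(x,T):=s(x)r(T)-r(x)s(T)$. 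This yields $\deg_x G\le\deg_x g=m$; combined with $\deg_x G\ge m$ (since $z=r/s$ in lowest terms is the ratio $a_j(x)/a_n(x)$ of two $x$-coefficients of $G$), one gets $\deg_x G=m$, so the cofactor $g/G$ lies in $\field[T]$. Now $g$ has no nonconstant factor in $\field[T]$: any such $h(T)$ would divide every $x$-coefficient $s_i\,r(T)-r_i\,s(T)$ of $g$, and since $(r,s)=1$ two of these are $\field$-independent combinations of $r(T)$ and $s(T)$, forcing $h\mid r$ and $h\mid s$. Hence $g/G\in\field^\times$ and $n=\deg_T G=\deg_T g=m$, which gives $\NF=\field(z)$. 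So your outline is right, but as written the key divisibilities and inequalities are reversed; if you follow the sketch literally it will not close.
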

\begin{proof}
See \cite[Thm. 22.19]{Mor}.
\end{proof}
A \emph{monomial order} on $\field[x_1,x_2,\ldots, x_n]$ is a total order $\leq$ on the monomials of $\field[x_1,x_2,\ldots, x_n]$ such that for all $p,q,r\in\field[x_1,x_2,\ldots, x_n]$ monomials,
\begin{enumerate}
\item[i)]$p\leq q$ if and only if $p\cdot r\leq q\cdot r$.
\item[ii)]$p\leq p\cdot q$.
\end{enumerate}
Lexicographic order and graded lex order are example of monomial orders, see \cite[Sec. 2.2]{CLO}.  Fix a monomial order. For any $p\in\field[x_1,x_2,\ldots, x_n]$, write $p$ as a sum of monomials $p=\sum_{j\in J}p_j$. We denote $\LT(p)$ the  \emph{leading term} of $p$ with respect to the monomial order (i.e. $\LT(p)=\max_{j\in J}p_j$), and we denote by $\LC(p)$  the coefficient of the leading term which is known as the \emph{leading coefficient} of $p$. For any ideal $I$ of $\field[x_1,x_2,\ldots, x_n]$, set $\LT(I):=\{\LT(p):\;p\in I\}$.  For any $P$ subset of $\field[x_1,x_2,\ldots, x_n]$, denote by $\left<P\right>$ the ideal of $\field[x_1,x_2,\ldots, x_n]$ generated by $P$; if $P=\{p_1,p_2,\ldots, p_m\}$, write $\left<p_1,p_2,\ldots, p_m\right>:=\left<P\right>$. We say that a finite subset $B=\{p_1,p_2,\ldots, p_m\}$ is a \emph{Gr\"{o}bner basis} of an ideal $I$ (with respect to the fixed monomial order) if
\begin{equation*}
\left<\LT(I)\right>=\left<\LT(p_1),\LT(p_2),\ldots, \LT(p_m)\right>.
\end{equation*}
We say a Gr\"{o}bner basis $B$ of $I$ is \emph{reduced} if 
\begin{enumerate}
\item[i)]$\LC(p)=1$ for all $p\in B$.
\item[ii)]For all $p\in B$, no monomial of $p$ lies in $\left<\LT(B\setminus \{p\})\right>$ (i.e. if $p=\sum_{j\in J}p_j$ with $p_j$ monomials for all $j\in J$, then $\{p_j\}_{j\in J}\cap \left<\LT(B\setminus \{p\})\right>=\emptyset$).
\end{enumerate}  
\begin{prop}
\label{R7}
Let $\leq$ be a monomial order in $\field[x_1,x_2,\ldots, x_n]$ and $I$ be a nonzero ideal of $\field[x_1,x_2,\ldots, x_n]$.
\begin{enumerate}
\item[i)]There exists a unique reduced Gr\"{o}bner basis of $I$ with respect to $\leq$.
\item[ii)]If $B$ is a Gr\"{o}bner basis of $I$ with respect to $\leq$, then it is a basis of $I$.
\end{enumerate} 
\end{prop}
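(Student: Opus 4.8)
The plan is to reduce everything to two standard facts about monomial ideals of $\field[x_1,x_2,\ldots,x_n]$, both proved purely combinatorially (see \cite[Ch.~2]{CLO}). First, \emph{Dickson's lemma}: every monomial ideal is generated by finitely many monomials; in particular the fixed monomial order is a well-order, so the multivariate division algorithm terminates. Second, the \emph{divisibility criterion}: if a monomial ideal $M$ is generated by a set $S$ of monomials, then a monomial lies in $M$ if and only if it is divisible by some element of $S$; a quick consequence is that among all monomial generating sets of $M$ there is one contained in every other, so $M$ has a unique \emph{minimal} monomial generating set. Since $\field$ is a field, the nonzero leading coefficients are units, so $M:=\langle\LT(I)\rangle$ is a monomial ideal. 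Picking, for each generator of $M$ supplied by Dickson's lemma, an element of $I$ whose leading term divides that generator (possible by the divisibility criterion), we obtain finitely many $f_1,\ldots,f_m\in I$ whose leading terms generate $M$; by definition $\{f_1,\ldots,f_m\}$ is then a Gröbner basis of $I$, so Gröbner bases exist at all.

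For part ii), let $B=\{p_1,\ldots,p_m\}$ be a Gröbner basis of $I$ and fix $f\in I$. Dividing $f$ by the ordered list $p_1,\ldots,p_m$ with the division algorithm yields $f=q_1p_1+\cdots+q_mp_m+r$ with $q_i\in\field[x_1,\ldots,x_n]$ and $r$ a polynomial none of whose monomials is divisible by any $\LT(p_i)$. Then $r=f-\sum_i q_ip_i\in I$, so if $r\neq 0$ we would have $\LT(r)\in\LT(I)$, which by the Gröbner basis property of $B$ lies in $\langle\LT(p_1),\ldots,\LT(p_m)\rangle$; the divisibility criterion would then force $\LT(r)$ to be divisible by some $\LT(p_i)$, contradicting the defining property of $r$. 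Hence $r=0$ and $f=\sum_iq_ip_i\in\langle p_1,\ldots,p_m\rangle$; since $\langle p_1,\ldots,p_m\rangle\subseteq I$ trivially, this proves ii).

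For existence in i), start from any Gröbner basis and delete, one at a time, every element whose leading term is divisible by the leading term of another; each deletion leaves $\langle\LT(\cdot)\rangle$ unchanged, so the result remains a Gröbner basis, now \emph{minimal} in the sense that (after rescaling all leading coefficients to $1$) its leading terms form the unique minimal monomial generating set of $M$, and in particular no leading term divides another. Now process the elements $g_1,\ldots,g_t$ of this minimal basis in turn, replacing $g_i$ by the remainder $g_i'$ of its division by the other current elements. Because $\LT(g_i)$ is divisible by no other leading term, the division algorithm never touches it, so $\LT(g_i')=\LT(g_i)$; hence the basis stays minimal, and by the exit condition of the division algorithm $g_i'$ has no monomial divisible by the leading term of another element. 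Since leading terms are never changed, once an element has been made reduced in this sense it remains reduced while the remaining elements are processed, so after one pass the resulting basis is a reduced Gröbner basis of $I$.

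For uniqueness, let $B,B'$ be reduced Gröbner bases of $I$. Both are minimal, so $\{\LT(p):p\in B\}$ and $\{\LT(p):p\in B'\}$ both equal the unique minimal monomial generating set of $M$; pair each $p\in B$ with the unique $p'\in B'$ having $\LT(p)=\LT(p')$. If $p\neq p'$ then $0\neq p-p'\in I$, so $\LT(p-p')$ is divisible by some $\LT(q)$ with $q\in B$; but $\LT(p-p')$ is a monomial of $p$ or of $p'$ other than their common leading term, and reducedness of $B$ (respectively $B'$) forbids such a monomial from being divisible by any element of $\LT(B)=\LT(B')$, a contradiction. Hence $p=p'$ for all pairs and $B=B'$. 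The step that most needs care is the reduction loop in the existence proof: one must verify both that the remainder operation leaves every leading term unchanged (so that minimality and every reduction achieved so far persist) and that the exit condition of the division algorithm delivers the required property for \emph{all} monomials of $g_i'$, not merely for its leading term; granted these, a single pass through $g_1,\ldots,g_t$ suffices.
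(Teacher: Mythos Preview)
Your argument is correct and is precisely the standard proof from \cite[Ch.~2]{CLO}; the paper does not give an independent proof but simply cites \cite[Prop.~2.7.6]{CLO} for i) and \cite[Cor.~2.5.6]{CLO} for ii), so you have in effect reproduced what the paper invokes.
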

\begin{proof}
Claim i) is \cite[Prop. 2.7.6]{CLO}. Claim ii) follows from \cite[Cor. 2.5.6]{CLO}.
\end{proof}
Next we have the following result of T. Dub\'{e}.
\begin{thm}
\label{R8}
Let $\leq$ be a monomial ordering in $\field[x_1,x_2,\ldots, x_n]$ and $I=\\\left<p_1,p_2,\ldots, p_n\right>$ be an ideal of $\field[x_1,x_2,\ldots, x_n]$ with $d:=\max_{1\leq i\leq n} \deg p_i$. If $B$ is a reduced Gr\"{o}bner basis of $I$ with respect to $\leq$, then
\begin{equation*}
\max_{p\in B}\deg p\leq 2\left(\frac{d^2}{2}+d\right)^{2^{n-1}}.
\end{equation*}
\end{thm}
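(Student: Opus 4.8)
The plan is to reproduce Dubé's combinatorial argument, whose two ingredients are a reduction to homogeneous ideals and an induction on the number of variables $n$ built around \emph{cone decompositions} of monomial ideals.

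\textbf{Reduction to the homogeneous case.} First I would pass to the homogenization $I^{h}\subseteq\field[x_0,x_1,\ldots,x_n]$ of $I$ with respect to a fresh variable $x_0$. After extending the given monomial order suitably, one has that the reduced Gröbner basis of $I$ is recovered from that of $I^{h}$ by the substitution $x_0=1$, that this substitution cannot increase degrees, and that $I^{h}$ is again generated in degree $\le d$; so it suffices to bound $\max_{p\in B}\deg p$ when $I$ is homogeneous (the passage through the homogenization is compensated in the final bookkeeping of the exponent). From now on $I$ is homogeneous, and I recall that $\{\LT(p):p\in B\}$ is exactly the set of minimal monomial generators of the initial ideal $\mathrm{in}(I)$, so it is enough to bound their degrees together with the degrees of the tails $p-\LT(p)$, the latter being controlled by the division algorithm once $\mathrm{in}(I)$ is known.

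\textbf{Cone decompositions.} For a monomial $s$ and a set of variables $V\subseteq\{x_1,\ldots,x_n\}$, call $C(s,V):=\{s\cdot m:\ m\text{ a monomial in the variables }V\}$ a \emph{cone} of dimension $|V|$. The engine of the proof is that the set of \emph{standard monomials} of $I$ (the monomials not in $\mathrm{in}(I)$, a $\field$-basis of $\field[x_1,\ldots,x_n]/I$) can be written as a finite disjoint union of cones, and one wants such a decomposition that is \emph{exact}: for every $e\ge 1$ either no cone has dimension $\ge e$ or there is a unique cone of dimension exactly $e$ whose generator is of minimal degree among the generators of all cones of dimension $\ge e$. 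This normalization is what makes the Hilbert-function bookkeeping telescope. The quantitative statement to establish is that a homogeneous ideal generated in degree $\le d$ admits an exact cone decomposition of its standard monomials in which every cone generator has degree bounded by the recursive function below; granting this, the minimal generators of $\mathrm{in}(I)$, hence $\LT(B)$, hence $B$, obey the same bound.

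\textbf{The induction and where the double exponential comes from.} Write $D(n,d)$ for the target bound. For the inductive step I would single out a variable, say $x_n$, and relate $I$ to its image $\bar I$ in $\field[x_1,\ldots,x_{n-1}]$ — generated in degree $\le d$, hence covered by the inductive hypothesis with bound $D(n-1,d)$ — via the four-term exact sequence linking $\field[x_1,\ldots,x_n]/I$, multiplication by $x_n$ on it, and $\field[x_1,\ldots,x_n]/(I+(x_n))$. The degree beyond which multiplication by $x_n$ becomes injective on $\field[x_1,\ldots,x_n]/\mathrm{in}(I)$ — the ``$x_n$-saturation degree'' — is precisely where the doubly exponential growth is incurred: bounding it by a degree-$2$ polynomial in $D(n-1,d)$ and $d$, and lifting an exact cone decomposition in $n-1$ variables to one in $n$ variables by adjoining $x_n$ with the appropriate multiplicities, produces a recursion of shape $D(n,d)\lesssim D(n-1,d)^{2}$, which with the base case $D(1,d)=d$ (in one variable $I=(x_1^{k})$ with $k\le d$) unwinds to $2\left(\frac{d^{2}}{2}+d\right)^{2^{\,n-1}}$.

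\textbf{Main obstacle.} The hard part will not be the homogenization or the dictionary ``reduced Gröbner basis $\leftrightarrow$ footprint of $\mathrm{in}(I)$'', which are routine, but the simultaneous control of the \emph{existence} and the \emph{degree} of exact cone decompositions along the induction: one has to build them and lift them through the four-term sequence while the cone-generator degrees obey exactly the claimed recursion, which forces a careful double induction on $n$ and on the cone dimension together with a tight estimate of the $x_n$-saturation degree — so that the constants come out precisely as $\frac{d^{2}}{2}+d$ and the leading factor $2$, and not something larger.
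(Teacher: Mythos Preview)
The paper does not prove this statement at all: its entire proof is the single line ``See \cite{Dub}.'' So there is no argument in the paper to compare your sketch against; the theorem is quoted from Dub\'e's 1990 paper and used as a black box.

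Your outline is a fair sketch of Dub\'e's strategy---homogenization, cone (Stanley) decompositions of the standard monomials, and an induction on the number of variables producing a recursion of the shape $D(n,d)\lesssim D(n-1,d)^2$---and that is indeed where the double exponential comes from. One point of caution: Dub\'e's actual mechanism is a purely combinatorial splitting procedure on cone decompositions (his \textsc{SPLIT} and \textsc{SHIFT} operations) together with Macaulay's bound on Hilbert functions, rather than the four-term exact sequence and ``$x_n$-saturation degree'' you describe; the exact-sequence language you use is closer to later treatments (e.g.\ regularity-based arguments) than to Dub\'e's original. If your goal is to reconstruct exactly the constant $2\bigl(\tfrac{d^2}{2}+d\bigr)^{2^{n-1}}$, you will need to follow Dub\'e's bookkeeping rather closely, since alternative routes through Castelnuovo--Mumford regularity typically yield the same doubly-exponential shape but not the identical constants. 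For the purposes of this paper, however, none of that is required: you may simply cite \cite{Dub} as the author does.
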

\begin{proof}
See \cite{Dub}.
\end{proof}
Given an ideal $I$ of $\field[x_1,x_2,\ldots, x_n]$ and $m\in\{1,2,\ldots, n\}$, the $m$-th elimination ideal of $I$ is $I_m:=I\cap \field[x_{m+1},x_{m+2},\ldots, x_n]$. We will need the Elimination Theorem.
\begin{thm}
\label{R9}
Let $I$ be a nonzero ideal of $\field[x_1,x_2,\ldots, x_n]$ and $m\in\{1,2,\ldots, n\}$. For any Gr\"{o}bner basis $B$ of $I$ with respect to the lexicographic order ( i.e. $x_n\leq x_{n-1}\leq \ldots\leq x_1$), we have that $B\cap \field[x_{m+1},x_{m+2},\ldots, x_n]$ is a Gr\"{o}bner basis of  $I_m$ with respect to the lexicographic order.
\end{thm}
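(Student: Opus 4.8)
The plan is to lean entirely on the one structural feature of the lexicographic order with $x_n\le x_{n-1}\le\cdots\le x_1$ that is relevant here: any monomial of positive degree in one of $x_1,\ldots,x_m$ is strictly larger than any monomial involving none of $x_1,\ldots,x_m$ (one checks this directly from the definition of lex order, comparing exponent vectors entry by entry from the left). Write $B=\{g_1,\ldots,g_t\}$ and set $B_m:=B\cap\field[x_{m+1},x_{m+2},\ldots,x_n]$; note that $B_m$ is finite and that $I_m$ is an ideal of $\field[x_{m+1},\ldots,x_n]$. First I would dispose of the trivial inclusion: each element of $B_m$ lies in $I$ and in $\field[x_{m+1},\ldots,x_n]$, hence in $I_m$, so $\left<B_m\right>\subseteq I_m$ inside $\field[x_{m+1},\ldots,x_n]$ and therefore $\left<\LT(g):g\in B_m\right>\subseteq\left<\LT(I_m)\right>$. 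By Proposition \ref{R7} ii), once we also establish the reverse inclusion $\left<\LT(I_m)\right>\subseteq\left<\LT(g):g\in B_m\right>$, the set $B_m$ will automatically be a Gr\"{o}bner basis — and in particular a generating set — of $I_m$, which is exactly what the theorem asserts.

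For the remaining inclusion, I would take an arbitrary nonzero $f\in I_m$ (if $I_m=(0)$ there is nothing to check). Since $f\in I$ and $B$ is a Gr\"{o}bner basis of $I$, some $g_i\in B$ satisfies $\LT(g_i)\mid\LT(f)$. Because $f\in\field[x_{m+1},\ldots,x_n]$, the monomial $\LT(f)$ is free of $x_1,\ldots,x_m$; hence so is its divisor $\LT(g_i)$. Now comes the key step: \emph{every} monomial of $g_i$ must likewise be free of $x_1,\ldots,x_m$, for a monomial of $g_i$ of positive degree in some $x_j$ with $j\le m$ would, by the property of the lex order recalled above, strictly exceed $\LT(g_i)$, which is impossible. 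Therefore $g_i\in\field[x_{m+1},\ldots,x_n]$, i.e.\ $g_i\in B_m$, and $\LT(f)\in\left<\LT(g):g\in B_m\right>$. As $f$ ranges over $I_m$, this gives $\left<\LT(I_m)\right>\subseteq\left<\LT(g):g\in B_m\right>$ and completes the proof.

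The only genuine obstacle is the monomial comparison in the middle of the second paragraph; everything else is ideal-theoretic bookkeeping. One should also keep in mind that the leading terms in the phrase ``$B_m$ is a Gr\"{o}bner basis of $I_m$'' are computed with respect to the restriction of the lex order to $\field[x_{m+1},\ldots,x_n]$, but since both the divisibility relation and the ordering of monomials in $x_{m+1},\ldots,x_n$ are unaffected by passing to the subring, this causes no difficulty.
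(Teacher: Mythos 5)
Your proof is correct. The paper itself gives no argument but simply cites \cite[Thm.\ 3.1.2]{CLO}, and what you wrote is the standard proof of that theorem: the decisive point is exactly the lex comparison you isolate, which forces a Gr\"obner basis element whose leading term avoids $x_1,\ldots,x_m$ to lie entirely in $\field[x_{m+1},\ldots,x_n]$, and the rest (the trivial inclusion and the appeal to Proposition~\ref{R7}~ii) to get that $B_m$ also generates $I_m$) is routine bookkeeping that you have handled correctly.
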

\begin{proof}
See \cite[Thm. 3.1.2]{CLO}.
\end{proof}
Let $R$  be a domain and $n\in\Pos$. Denote by $\ori$ the origin in $R^{n+1}$ and set
\begin{equation*}
\Pro^n_R:=R^{n+1}\setminus \{\ori\}/\sim\qquad\text{where }\ab\sim \bb \text{ if there is $r$ a unit of $R$ such that }\ab=r\cdot\bb.
\end{equation*}
  For any homogeneous ideal $I$ in $R[x_1,x_2,\ldots, x_n]$,  set
  \begin{equation*}
  \Zc_R(I):=\{\ab\in\Pro^n_R:\;p(\ab)=0\text{ for all }p\in I\};
  \end{equation*}
  if $I$ is the homogeneous ideal generated by $p_1,p_2,\ldots, p_m$, we  write
  \begin{equation*}
  \Zc_R(p_1,p_2,\ldots, p_m):=\Zc_R(I).
  \end{equation*}
  As usual $\Pro^n_R$ will be considered with the Zariski topology. For any subset $X$ of $\Pro^n_R$, we denote by $\overline{X}$ its closure in $\Pro^n_R$.  The following particular case of \cite[Prop A.7.12]{GP} is a key point in the proof of Theorem \ref{R3}.
  \begin{prop}
  \label{R10}
  Let  $\field$ be an algebraically closed field, $p_0,p_1,\ldots, p_m\in\\\field[x_0,x_1,\ldots, x_n]$ be homogeneous with the same degree and $U$ be a nonempty open subset of $\Pro_\field^n$ such that
\begin{equation*}  
   \Zc_\field(p_0,p_1,\ldots, p_m)\cap U=\emptyset.
\end{equation*}   
    Define the morphism
  \begin{equation*}
  \phi:U\longrightarrow \Pro^m_\field,\qquad \phi(\ab)=[p_0(\ab):p_1(\ab):\ldots:p_m(\ab)],
  \end{equation*}
 and the ideal $I:=\left<p_0-y_0,p_1-y_1,\ldots, p_m-y_m\right>$ in $\field[x_0,\ldots,x_n,y_0,\ldots, y_m]$. Then
  \begin{equation*}
  \overline{\phi(U)}=\Zc_\field\left(I\cap \field[y_0,y_1,\ldots, y_m]\right).
    \end{equation*}
 \end{prop}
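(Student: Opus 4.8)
The plan is to descend to affine cones, apply the elimination theorem there, and lift the conclusion back to projective space; algebraic closedness of $\field$ will enter only through the surjectivity of $\lambda\mapsto\lambda^d$ on $\field\setminus\{0\}$. Write $d:=\deg p_0=\cdots=\deg p_m$, put $\ab=(x_0,\dots,x_n)$ and $\bb=(y_0,\dots,y_m)$, and for an ideal $J$ of a polynomial ring let $V(J)$ denote its zero set in the corresponding affine space. Call a nonempty subset of an affine space a \emph{cone} if it is stable under multiplication by every element of $\field\setminus\{0\}$. Let $\pi_n\colon\field^{n+1}\setminus\{\ori\}\to\Pro^n_\field$ and $\pi_m\colon\field^{m+1}\setminus\{\ori\}\to\Pro^m_\field$ be the canonical projections; these are continuous and surjective, and a subset of $\Pro^n_\field$ is closed exactly when its preimage under $\pi_n$ is closed in $\field^{n+1}\setminus\{\ori\}$. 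Set $\tilde U:=\pi_n^{-1}(U)$, a nonempty Zariski-open, hence dense, subset of $\field^{n+1}$, and let $\tilde\phi\colon\field^{n+1}\to\field^{m+1}$ be the polynomial map $\tilde\phi(\ab)=(p_0(\ab),\dots,p_m(\ab))$. Then $\phi=\pi_m\circ\tilde\phi|_{\tilde U}$, and the hypothesis $\Zc_\field(p_0,\dots,p_m)\cap U=\emptyset$ says exactly that $\tilde\phi(\tilde U)\subseteq\field^{m+1}\setminus\{\ori\}$.

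First I would record two structural facts. (1) With the grading on $\field[\ab,\bb]$ in which $\deg x_j=1$ and $\deg y_i=d$, each generator $p_i-y_i$ of $I$ is homogeneous; hence $I$, and therefore the elimination ideal $I\cap\field[\bb]$, is homogeneous, and on $\field[\bb]$ this grading is the standard one, so $\Zc_\field(I\cap\field[\bb])$ is well-defined in the sense of the proposition. (2) The set $\tilde\phi(\tilde U)$ is a cone: $\tilde U$ is stable under scaling, $\tilde\phi(\lambda\ab)=\lambda^d\tilde\phi(\ab)$ for $\lambda\in\field\setminus\{0\}$, and $\lambda\mapsto\lambda^d$ is onto $\field\setminus\{0\}$ since $\field$ is algebraically closed; consequently $\tilde\phi(\tilde U)$ and its affine Zariski closure $\tilde X:=\overline{\tilde\phi(\tilde U)}$ are cones.

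Next I would identify $\tilde X$ with the affine elimination variety. The zero set $V(I)\subseteq\field^{n+1}\times\field^{m+1}$ is the graph $\{(\ab,\tilde\phi(\ab)):\ab\in\field^{n+1}\}$, so its projection onto the $\bb$-coordinates is $\tilde\phi(\field^{n+1})$. By the geometric form of the elimination theorem — which follows from Theorem \ref{R9} together with Hilbert's Nullstellensatz over the algebraically closed field $\field$ — the variety $V(I\cap\field[\bb])$ is the affine Zariski closure of that projection; and since $\tilde U$ is dense in $\field^{n+1}$ and $\tilde\phi$ is continuous, this closure equals $\overline{\tilde\phi(\tilde U)}=\tilde X$. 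As $I\cap\field[\bb]$ is homogeneous and $\field$ is algebraically closed, $\Zc_\field(I\cap\field[\bb])=\pi_m(V(I\cap\field[\bb])\setminus\{\ori\})=\pi_m(\tilde X\setminus\{\ori\})$.

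It then remains to check $\pi_m(\tilde X\setminus\{\ori\})=\overline{\phi(U)}$. Since $\tilde X$ is a cone, $\tilde X\setminus\{\ori\}$ is closed in $\field^{m+1}\setminus\{\ori\}$ and is a union of $\pi_m$-fibers, so $\pi_m^{-1}(\pi_m(\tilde X\setminus\{\ori\}))=\tilde X\setminus\{\ori\}$ is closed, whence $\pi_m(\tilde X\setminus\{\ori\})$ is Zariski-closed in $\Pro^m_\field$; it contains $\pi_m(\tilde\phi(\tilde U))=\phi(U)$, hence contains $\overline{\phi(U)}$. Conversely, $\tilde X\setminus\{\ori\}$ is the closure of $\tilde\phi(\tilde U)$ inside $\field^{m+1}\setminus\{\ori\}$, so continuity of $\pi_m$ gives $\pi_m(\tilde X\setminus\{\ori\})\subseteq\overline{\pi_m(\tilde\phi(\tilde U))}=\overline{\phi(U)}$. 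Therefore $\overline{\phi(U)}=\pi_m(\tilde X\setminus\{\ori\})=\Zc_\field(I\cap\field[\bb])$. The only non-formal ingredient is the geometric elimination theorem used in the third step; I expect the point needing the most care to be step two — checking that $I\cap\field[\bb]$ is genuinely homogeneous and that $\tilde\phi(\tilde U)$ is a cone, the latter being precisely where algebraic closedness of $\field$ is used.
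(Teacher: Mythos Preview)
Your argument is correct. The paper does not supply its own proof of this proposition; it simply cites \cite[Prop.~A.7.12]{GP}, so there is no in-paper approach to compare against. What you have written is essentially the standard cone-and-elimination argument underlying that reference: pass to affine cones, identify $V(I)$ with the graph of $\tilde\phi$, invoke the Closure Theorem (elimination plus Nullstellensatz) to get $V(I\cap\field[\bb])=\overline{\tilde\phi(\field^{n+1})}$, use density of $\tilde U$ to replace $\field^{n+1}$ by $\tilde U$, and then push down via $\pi_m$ using that the relevant sets are cones. Your observation that algebraic closedness is used precisely to make $\tilde\phi(\tilde U)$ a full cone (via surjectivity of $\lambda\mapsto\lambda^d$) is exactly the point, and the homogeneity check on $I\cap\field[\bb]$ via the weighted grading is clean. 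One cosmetic remark: when you say ``on $\field[\bb]$ this grading is the standard one'' you mean up to the uniform rescaling by $d$, which is harmless for homogeneity but worth saying explicitly.
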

 \begin{proof}
 See \cite[Prop A.7.12]{GP}.
 \end{proof}
 Another important tool that we will need is Chevalley's Theorem. We say that a subset of a topological space is \emph{constructible} if it is the finite disjoint union of intersections of open and closed subsets; in particular an irreducible constructible subset is an intersections of an open and a closed subset.
 \begin{thm}
 \label{R11}
 Let  $U$ be a constructible subset of $\Pro^n_\field$ and $\phi:U\rightarrow \Pro^m_\field$ a morphism. Then $\phi(U)$ is a constructible subset of $\Pro^m_\field$.
 \end{thm}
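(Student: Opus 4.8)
The plan is to reduce this (Chevalley's Theorem) by successive simplifications to a single geometric fact — that a dominant morphism of irreducible affine varieties over an algebraically closed field has image containing a nonempty Zariski-open set — and then to run a Noetherian induction. Throughout I would take $\field$ algebraically closed, which is the case needed in the sequel. Since constructibility is a local property on the target, covering $\Pro^n_\field$ and $\Pro^m_\field$ by their standard affine charts reduces the assertion to the case of a morphism $\phi\colon U\to\Aff^m_\field$ with $U$ a constructible subset of $\Aff^n_\field$. Writing $U$ as a finite disjoint union of locally closed subsets — each of which is again a variety on which $\phi$ restricts to a morphism — and using that the image of a union is the union of the images, it suffices to prove: for every morphism $f\colon X\to Y$ of varieties, $f(X)$ is constructible in $Y$.

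Next I would reduce to the essential case. Covering $Y$ by affine opens, and then covering the preimage of each by affine opens, reduces us to $X,Y$ affine. Decomposing $Y$ into irreducible components $Y_k$ and replacing $X$ by the closed subvarieties $f^{-1}(Y_k)$, then decomposing each $f^{-1}(Y_k)$ into irreducible components and replacing $Y_k$ by the closure of the image of each such component, we may assume $X$ and $Y$ are irreducible, $Y$ affine, and $f$ dominant onto $Y$.

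The heart of the matter — and the step I expect to be the main obstacle — is the claim that such an $f$ has $f(X)\supseteq W_0$ for some nonempty open $W_0\subseteq Y$. One clean route is commutative-algebraic: writing $A=\field[Y]\hookrightarrow B=\field[X]$ (a finitely generated algebra extension, injective since $f$ is dominant) and $K=\mathrm{Frac}(A)$, Noether normalization yields a polynomial subring $K[t_1,\dots,t_d]$ of $B\otimes_A K$ over which the latter is finite; clearing denominators produces $0\neq a\in A$ and elements $t_i\in B_a$, algebraically independent over $A_a$, with $B_a$ finite over $A_a[t_1,\dots,t_d]$. Since both an integral injective extension and a polynomial extension induce surjections on $\mathrm{Spec}$, the composite $\mathrm{Spec}\,B_a\to\mathrm{Spec}\,A_a$ is surjective, i.e. the principal open $W_0=D(a)$ of $Y$ is contained in $f(X)$. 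An alternative more in keeping with the tools of this paper: embedding $X$ and passing to a graph, realize $f(X)$ as the image of a closed set $V\subseteq\Aff^N_\field$ under the projection $\pi$ dropping the first coordinate, compute a reduced lexicographic Gr\"{o}bner basis $G$ of the ideal $I$ of $V$ with $x_1$ largest, use the Elimination Theorem (Theorem \ref{R9}) to see that the members of $G$ lying in $\field[x_2,\dots,x_N]$ generate $I\cap\field[x_2,\dots,x_N]$ (whose variety is $\overline{\pi(V)}$), and use the Extension Theorem together with the reducedness of $G$ — which prevents all the $x_1$-leading coefficients of the remaining members of $G$ from vanishing on $\overline{\pi(V)}$ — to produce a proper closed subset $W'\subsetneq\overline{\pi(V)}$ with $\overline{\pi(V)}\setminus W'\subseteq\pi(V)$.

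Finally I would close with Noetherian induction on $Y$. Given $f\colon X\to Y$ dominant as above and $W_0\subseteq Y$ nonempty open with $W_0\subseteq f(X)$, one has
\begin{equation*}
f(X)=W_0\cup f\big(f^{-1}(Y\setminus W_0)\big),
\end{equation*}
where $Y\setminus W_0$ is a proper closed — hence strictly lower-dimensional — subset of $Y$, and $f^{-1}(Y\setminus W_0)$ is closed in $X$. The inductive hypothesis, applied to the morphism $f^{-1}(Y\setminus W_0)\to Y\setminus W_0$, shows the second term on the right is constructible in $Y\setminus W_0$, hence in $Y$; and $W_0$ is open, hence constructible. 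Therefore $f(X)$ is constructible, which would complete the proof.
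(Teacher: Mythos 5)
Your proposal is correct and follows essentially the same route as the reference the paper cites: the paper gives no proof of its own, only a pointer to Hartshorne, Section II.3, where Chevalley's theorem is Exercise II.3.19 and its core input — that a dominant morphism of irreducible varieties has image containing a nonempty open subset of the target — is Exercise II.3.7, both established by exactly the reductions, Noether normalization, and Noetherian induction you describe. One small caveat on your secondary, Gr\"{o}bner-basis route: the hypothesis of the Extension Theorem (a nonvanishing $x_1$-leading coefficient among the generators on $\overline{\pi(V)}$) is not a consequence of reducedness of the Gr\"{o}bner basis $G$; rather, for $I$ prime one must split cases according to whether the class of $x_1$ is algebraic or transcendental over $\mathrm{Frac}\bigl(\field[x_2,\dots,x_n]/I_1\bigr)$ — the algebraic case producing an element of $I$ whose leading $x_1$-coefficient lies outside $I_1$, and the transcendental case forcing $I=I_1\cdot\field[x_1,\dots,x_n]$ so that the projection is already surjective onto $V(I_1)$. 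Your primary Noether-normalization route does not have this issue.
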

 \begin{proof}
 See \cite[Sec. II.3]{Ha}.
 \end{proof}
 The former statements  will be applied in the following result. 
  \begin{cor}
  \label{R12}
  Let $\field$ be an algebraically closed field, $p_0,p_1,\ldots, p_m\in\\\field[x_0,x_1,\ldots, x_n]$ be homogeneous with the  degree $d$ and $U$ be a nonempty irreducible open subset of $\Pro_\field^n$ such that
\begin{equation*}  
   \Zc_\field(p_0,p_1,\ldots, p_m)\cap U=\emptyset.
\end{equation*}   
    Define the morphism
  \begin{equation*}
  \phi:U\longrightarrow \Pro^m_\field,\qquad \phi(\ab)=[p_0(\ab):p_1(\ab):\ldots:p_m(\ab)].
  \end{equation*}
Then there is a family of polynomials $\Fc=\{r_i\}_{i=1}^{n_r}\cup \{s_i\}_{i=1}^{n_s}$ in $\field[y_0,y_1,\ldots, y_m]$ such that
\begin{equation*}
\max_{1\leq i\leq n_r}\deg r_i\leq 2\left(\frac{d^2}{2}+d\right)^{2^{m+n+1}}.
\end{equation*} 
 and 
  \begin{equation*}
  \phi(U)=\Zc_\field\left(r_1,r_2,\ldots, r_{n_r}\right)\cap \left(\Pro^m_{\field}\setminus \Zc_\field\left(s_1,s_2,\ldots, s_{n_s}\right)\right).
    \end{equation*}
 \end{cor}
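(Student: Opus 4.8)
The plan is to combine Proposition \ref{R10} (the closure of the image is a variety defined by an elimination ideal), Theorem \ref{R11} (Chevalley: the image itself is constructible), Proposition \ref{R7} (existence and uniqueness of a reduced Gr\"{o}bner basis), the Elimination Theorem \ref{R9} (a lex Gr\"{o}bner basis intersected with a polynomial subring is a Gr\"{o}bner basis of the elimination ideal), and Dub\'{e}'s bound \ref{R8} (the degrees in a reduced Gr\"{o}bner basis are controlled). The degree $d$ appearing in the exponent $2^{m+n+1}$ of Dub\'{e}'s bound suggests the intended route: work with the ideal $I = \left<p_0 - y_0, p_1 - y_1, \ldots, p_m - y_m\right>$ in the polynomial ring $\field[x_0, x_1, \ldots, x_n, y_0, y_1, \ldots, y_m]$, which has $m + n + 2$ variables, so eliminating the $n+1$ variables $x_0, \ldots, x_n$ and invoking Dub\'{e}'s theorem with $m + n + 2$ variables (hence exponent $2^{(m+n+2)-1} = 2^{m+n+1}$) on generators of degree $\le d$ yields exactly the stated bound on $\max_i \deg r_i$.

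The key steps, in order, are as follows. First, I would set up the ideal $I = \left<p_0 - y_0, \ldots, p_m - y_m\right>$ in $\field[x_0, \ldots, x_n, y_0, \ldots, y_m]$ using the lexicographic monomial order in which $x_n \le x_{n-1} \le \cdots \le x_0 \le y_m \le \cdots \le y_0$ (so that the $x$-variables are eliminated first), and take $B$ to be its reduced Gr\"{o}bner basis, which exists and is unique by Proposition \ref{R7}(i). By the Elimination Theorem \ref{R9}, $B \cap \field[y_0, \ldots, y_m]$ is a Gr\"{o}bner basis of the $(n+1)$-st elimination ideal $I \cap \field[y_0, \ldots, y_m]$; denote its elements by $r_1, \ldots, r_{n_r}$. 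Since the defining generators $p_i - y_i$ all have degree $d$ (the $p_i$ are homogeneous of degree $d$ and $d \ge 1$, as otherwise the morphism is constant and the claim is trivial — one should note this degenerate case), Dub\'{e}'s Theorem \ref{R8} applied in $m + n + 2$ variables gives $\max_{p \in B} \deg p \le 2\left(\frac{d^2}{2} + d\right)^{2^{m+n+1}}$, and in particular $\max_{1 \le i \le n_r} \deg r_i$ obeys this same bound. Second, by Proposition \ref{R10} we have $\overline{\phi(U)} = \Zc_\field(I \cap \field[y_0, \ldots, y_m]) = \Zc_\field(r_1, \ldots, r_{n_r})$, using Proposition \ref{R7}(ii) to know the Gr\"{o}bner basis generates the elimination ideal. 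Third, by Chevalley's Theorem \ref{R11} (applicable since $U$ is open in $\Pro^n_\field$, hence constructible), $\phi(U)$ is a constructible subset of $\Pro^m_\field$. Since $U$ is irreducible and $\phi$ is a morphism, $\phi(U)$ is irreducible, hence — by the parenthetical remark in the definition of constructible — $\phi(U)$ is the intersection of an open set and a closed set; moreover its closure is the closed set in that intersection, namely $\overline{\phi(U)} = \Zc_\field(r_1, \ldots, r_{n_r})$. Writing the complementary open set as $\Pro^m_\field \setminus \Zc_\field(s_1, \ldots, s_{n_s})$ for suitable polynomials $s_i$ (any generating set of the homogeneous ideal cutting out the complement of that open set), we obtain $\phi(U) = \Zc_\field(r_1, \ldots, r_{n_r}) \cap \left(\Pro^m_\field \setminus \Zc_\field(s_1, \ldots, s_{n_s})\right)$, as required. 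The $s_i$ need no degree bound, which matches the statement.

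The main obstacle I anticipate is bookkeeping about which ring and how many variables to run Dub\'{e}'s bound in so that the exponent comes out to exactly $2^{m+n+1}$: one must be careful that the relevant ideal lives in $\field[x_0, \ldots, x_n, y_0, \ldots, y_m]$, a ring in $(n+1) + (m+1) = m+n+2$ variables, so Dub\'{e}'s Theorem \ref{R8} (whose bound reads $2\left(\frac{d^2}{2}+d\right)^{2^{N-1}}$ for $N$ variables) produces exponent $2^{m+n+1}$ — and that this bound for the whole reduced Gr\"{o}bner basis $B$ of course bounds the subset $B \cap \field[y_0, \ldots, y_m] = \{r_1, \ldots, r_{n_r}\}$. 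A secondary subtlety is the passage from "constructible and irreducible" to "one open intersect one closed", and the identification of the closed part of that decomposition with the Zariski closure $\overline{\phi(U)}$; this is where Proposition \ref{R10} is used to pin that closure down as $\Zc_\field(r_1, \ldots, r_{n_r})$. Finally one should remark that homogeneity is needed to make the projective objects well defined (the $p_i$ having the common degree $d$ is exactly what makes $\phi$ a well-defined morphism on $U$ away from the common zero locus), and that the hypothesis $\Zc_\field(p_0, \ldots, p_m) \cap U = \emptyset$ is precisely what is needed for both $\phi$ to be defined on all of $U$ and for Proposition \ref{R10} to apply.
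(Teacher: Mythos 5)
Your proposal follows essentially the same route as the paper: apply Chevalley's Theorem (Theorem \ref{R11}) together with irreducibility of $U$ to write $\phi(U)$ as the intersection of an open set with the closed set $\overline{\phi(U)}$; identify $\overline{\phi(U)}$ with $\Zc_\field(I \cap \field[y_0,\ldots,y_m])$ via Proposition \ref{R10}; use the reduced Gr\"{o}bner basis from Proposition \ref{R7} and the Elimination Theorem \ref{R9} to produce the generators $r_1,\ldots,r_{n_r}$; and apply Dub\'{e}'s bound (Theorem \ref{R8}) in $m+n+2$ variables to obtain the exponent $2^{m+n+1}$. The order of the two halves (Gr\"{o}bner bookkeeping vs.\ constructibility) is swapped relative to the paper, but the content is the same.

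Two small points. First, the lexicographic order you wrote, $x_n \le \cdots \le x_0 \le y_m \le \cdots \le y_0$, makes the $y$-variables the largest, so Theorem \ref{R9} applied to it would eliminate the $y$'s, not the $x$'s; to eliminate $x_0,\ldots,x_n$ you need the opposite relation $y_m \le \cdots \le y_0 \le x_n \le \cdots \le x_0$, which is what the paper uses. Your parenthetical ``so that the $x$-variables are eliminated first'' shows you had the right intent, so this is just a transcription slip, but it should be corrected. Second, your observation that one should exclude the degenerate case $d=0$ (otherwise the stated degree bound gives $0$ while the $p_i$ are constants and $\phi(U)$ is a point, which needs degree-$1$ equations) is a legitimate extra precaution that the paper itself elides; in the paper's application the relevant degree is always $\ge 1$, so no harm is done there, but it is good that you flagged it.
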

 \begin{proof}
 Since $U$ is open, it is constructible.  Thus Theorem \ref{R11} implies that  $\phi(U)$ is constructible. Moreover, inasmuch as $U$ is irreducible, $\phi(U)$ is irreducible so there are $U'$ open in $\Pro^m_\field$ and $C$ closed in $\Pro^m_\field$ such that 
 \begin{equation*}
 \phi(U)=U'\cap C;
 \end{equation*}
 furthermore, since $\overline{\phi(U)}\subseteq C$, we have that
 \begin{equation}
 \label{E1}
 \phi(U)=U'\cap \overline{\phi(U)}.
 \end{equation}
 Now consider the lexicoraphic order $y_m\leq y_{m-1}\leq y_0\leq x_n\leq x_{n-1}\leq \ldots\leq x_0$ in $\field[x_0,x_1,\ldots, x_n,y_0,y_1,\ldots, y_m]$. The ideal   $I:=\left<p_0-y_0,p_1-y_1,\ldots, p_m-y_m\right>$ in $\field[x_0,x_1,\ldots,x_n,y_0,y_1,\ldots, y_m]$ has a reduced Gr\"{o}bner basis $B$ with respect to the lexicographic order by  Proposition \ref{R7} i). Moreover, from Theorem \ref{R8}, 
 \begin{equation}
 \label{E2}
 \max_{p\in B}\deg p\leq 2\left(\frac{d^2}{2}+d\right)^{2^{m+n+1}}.
\end{equation}  
Theorem \ref{R9} implies that $B':=B\cap \field[y_0,y_1,\ldots, y_m]$ is a Gr\"{o}bner basis of $I':=I\cap \field[y_0,y_1,\ldots, y_m]$. Write $B'=\{r_i\}_{i=1}^{n_r}$. On the one hand, $B'$ is a basis of $I'$ by Proposition \ref{R7} ii). On the other hand, Proposition \ref{R10} implies that 
 \begin{equation*}
   \overline{\phi(U)}=\Zc_\field\left(I'\right).
    \end{equation*}
   Thus 
   \begin{equation}
 \label{E3}
  \overline{\phi(U)}=\Zc_\field\left(r_1,r_2,\ldots, r_{n_r}\right).
    \end{equation}
    Since $U'$ is open, the noetherianity of $\field[y_0,y_1,\ldots, y_m]$ implies the existence of a finite family of polynomials $\{s_i\}_{i=1}^{n_s}$ in $\field[y_0,y_1,\ldots, y_m]$ such that
    \begin{equation}
    \label{E4}
    U'=\Pro^m_{\field}\setminus \Zc_\field\left(s_1,s_2,\ldots, s_{n_s}\right).
    \end{equation}
  If we set $\Fc=\{r_i\}_{i=1}^{n_r}\cup \{s_i\}_{i=1}^{n_s}$, then 
  \begin{equation*}
\max_{1\leq i\leq n_r}\deg r_i\leq 2\left(\frac{d^2}{2}+d\right)^{2^{m+n+1}}
\end{equation*} 
 by (\ref{E2}), and
 \begin{equation*}
  \overline{\phi(U)}=\Zc_\field\left(r_1,r_2,\ldots, r_{n_r}\right)\cap \left(\Pro^m_{\field}\setminus \Zc_\field\left(s_1,s_2,\ldots, s_{n_s}\right)\right)
    \end{equation*}
    by (\ref{E1}), (\ref{E3}) and (\ref{E4}).
 \end{proof}
 Now we state a weak version of Bezout's Theorem.
 \begin{thm}
 \label{R13}
 Let $C_1$ and $C_2$ be algebraic curves in $\field^2$ of degree $d_1$ and $d_2$, respectively. Then
 \begin{equation*}
 |C_1\cap C_2|\leq d_1\cdot d_2
 \end{equation*}
 unless $C_1$ and $C_2$ have an irreducible component in common. 
 \end{thm}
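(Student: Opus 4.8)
The plan is to establish the bound $|C_1\cap C_2|\leq d_1 d_2$ under the hypothesis that $C_1$ and $C_2$ have no common irreducible component, by a resultant computation. Say $C_i$ is the curve in $\field^2$ defined by $p_i=0$, where $p_i\in\field[x,y]$ has $\deg p_i=d_i$; the case where some $p_i$ is constant is trivial, so assume $d_1,d_2\geq 1$. That $C_1$ and $C_2$ have no common irreducible component means exactly that $p_1$ and $p_2$ have no common irreducible factor in $\field[x,y]$; in particular they have no common factor of positive $y$-degree, so $\gcd(p_1,p_2)=1$ in $\field(x)[y]$. Passing to an algebraic closure if necessary (this only enlarges $C_1\cap C_2$ and preserves the hypothesis), I may assume $\field$ is infinite.

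First I would normalize the top-degree behaviour in $y$: since $\field$ is infinite, after a generic linear change of coordinates the coefficient of $y^{d_i}$ in $p_i$ is a nonzero constant for $i=1,2$, so after rescaling I may assume $p_1,p_2$ are monic in $y$ of degrees $d_1,d_2$, still of total degrees $d_1,d_2$. Then $r:=\Res_y(p_1,p_2)\in\field[x]$ is well defined, and $r\neq 0$ because $p_1,p_2$ have no common factor of positive $y$-degree. Homogenizing $p_1,p_2$ to forms $\tilde p_1,\tilde p_2$ of degrees $d_1,d_2$ in $x,y,z$, the resultant $\Res_y(\tilde p_1,\tilde p_2)$ is homogeneous of degree $d_1 d_2$ in $x,z$; setting $z=1$ gives $\deg r\leq d_1 d_2$. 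In particular the $x$-coordinate of every point of $C_1\cap C_2$ is a root of $r$, so $C_1\cap C_2$ is finite.

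Next I would separate these finitely many points by a further shear $x\mapsto x+\mu y$. All but finitely many $\mu$ both preserve the monicity of $p_1,p_2$ in $y$ and keep distinct points of $C_1\cap C_2$ at distinct $x$-coordinates, so I may assume in addition that the projection $\pi$ with $\pi(a,b)=a$ is injective on $C_1\cap C_2$, with $p_1,p_2$ still monic in $y$ of degrees $d_1,d_2$ (so $\deg r\leq d_1 d_2$ still holds, by the same argument). Finally, for each $(a,b)\in C_1\cap C_2$ the univariate polynomials $p_1(a,y)$ and $p_2(a,y)$ have the common root $b$; since $p_1,p_2$ are monic in $y$ their $y$-degrees do not drop under $x=a$, so by the specialization property of resultants $r(a)=\Res_y\bigl(p_1(a,y),p_2(a,y)\bigr)=0$. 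Thus $\pi(C_1\cap C_2)$ is contained in the zero set of $r$, and injectivity of $\pi$ yields
\[|C_1\cap C_2|=|\pi(C_1\cap C_2)|\leq\deg r\leq d_1 d_2.\]

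I expect the only real difficulty to be organizational rather than substantive: one cannot impose both ``$p_i$ monic in $y$'' and ``distinct intersection points have distinct $x$-coordinates'' by a single obviously-generic choice, since the second condition refers to a set not yet known to be finite — which is why the argument proceeds in two stages (normalize, deduce finiteness, then separate). The two auxiliary facts used, namely $\deg\Res_y(p_1,p_2)\leq d_1 d_2$ for $p_1,p_2$ of total degrees $d_1,d_2$ and the specialization identity for resultants of polynomials with non-vanishing leading coefficients, are standard properties of the resultant (see, e.g., \cite{CLO}).
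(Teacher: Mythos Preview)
Your argument is correct. The paper does not actually prove this statement: it simply cites \cite[Cor.~I.7.8]{Ha}, which is the full B\'ezout theorem with multiplicities in projective space, proved there via Hilbert polynomials. Your route is genuinely different and more elementary: you work affinely with resultants, first normalizing so that $p_1,p_2$ are monic in $y$ of the correct degrees (so that $\deg_x\Res_y(p_1,p_2)\leq d_1d_2$ and specialization of the resultant behaves well), then applying a second shear to make the projection to the $x$-axis injective on the (already finite) intersection. This buys you a self-contained proof that avoids any projective machinery or intersection multiplicities, at the cost of the two-stage coordinate change you correctly flag; the Hartshorne reference, by contrast, gives the sharper statement that the intersection multiplicities sum to exactly $d_1d_2$ in $\Pro^2$, of which your inequality is the shadow. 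One very minor remark: since in this paper $\field\in\{\Real,\Com\}$, the passage to an algebraic closure is only needed to cover $\field=\Real$, and there your parenthetical ``preserves the hypothesis'' relies on the fact that $\gcd(p_1,p_2)=1$ in $\field[x,y]$ implies the same over $\overline{\field}[x,y]$, which follows from computing the gcd in $\field(x)[y]$ via the Euclidean algorithm.
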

 \begin{proof}
 See \cite[Cor. I.7.8]{Ha}.
 \end{proof}
 Let $p,q\in\field[x_1,x_2,\ldots, x_n]$. Write $p(x_1,x_2,\ldots,x_n)=\sum_{i=0}^{n_p}p_{n_p-i}(x_2,\ldots,x_n)x_1^{i}$ and $q(x_1,x_2,\ldots,x_n)=\sum_{i=0}^{n_q}q_{n_q-i}(x_2,\ldots,x_n)x_1^{i}$ with $p_0,p_1,\ldots,p_{n_p}, q_0,\ldots,q_{n_q}\in\field[x_2,x_3,\ldots,x_n]$ and $q_0p_0\neq 0$. Define the $(n_p+n_q)\times (n_p+n_q)$-matrix with coefficients in $\field[x_2,x_3,\ldots,x_n]$.
 \begin{equation*}
\mathrm{Syl}(p,q,x_1)=\begin{array}{cc}
\overbrace{\hspace{100 pt}}^{n_q} & \overbrace{\hspace{100 pt}}^{n_p} \\
 \left( \begin{array}{cccccccccc}
p_0&0&\ldots&0&0\\
p_1&p_0&\ldots&0&0\\
&\ddots&\ddots&&\\
&&&p_0&0\\
&&&p_1&p_0\\
&&&p_2&p_1\\
&&&\ddots&\vdots\\
0&0&\ldots&0&p_{n_p}\\
 \end{array} \right.& \left. \begin{array}{cccccccccc}
q_0&0&\ldots&0&0\\
q_1&q_0&\ldots&0&0\\
&\ddots&\ddots&&\\
&&&q_0&0\\
&&&q_1&q_0\\
&&&q_2&q_1\\
&&&\ddots&\vdots\\
0&0&\ldots&0&q_{n_q}\\
 \end{array} \right)
 \end{array}
  \end{equation*}
  and the \emph{resultant} $\Res(p,q,x_1):=\det(\mathrm{Syl}(p,q,x_1))$. The main property of the resultants that we will need is the following.
  \begin{prop}
  \label{R14}
  Let $p,q\in\field[x_1,x_2,\ldots, x_n]$. Then $\Res(p,q,x_1)=0$ if and only if $p$ and $q$ have a common factor in $\field[x_1,x_2,\ldots, x_n]$ which has positive degree with respect to $x_1$.
  \end{prop}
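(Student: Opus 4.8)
The statement to prove is Proposition \ref{R14}, the fundamental property of resultants: $\Res(p,q,x_1)=0$ if and only if $p$ and $q$ share a common factor of positive degree in $x_1$. The plan is to work over the ring $R=\field[x_2,\ldots,x_n]$ and its fraction field $\NF=\field(x_2,\ldots,x_n)$, viewing $p$ and $q$ as polynomials in the single variable $x_1$ over $\NF$. The key reduction is that having a common factor in $R[x_1]$ of positive $x_1$-degree is equivalent, by Gauss's Lemma, to having a common factor in $\NF[x_1]$ of positive degree, i.e.\ to $\gcd_{\NF[x_1]}(p,q)\neq 1$.

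First I would set up the bilinear map $\Phi:\NF[x_1]_{<n_q}\times \NF[x_1]_{<n_p}\to \NF[x_1]_{<n_p+n_q}$ sending $(u,v)\mapsto up+vq$, where $\NF[x_1]_{<k}$ denotes polynomials of $x_1$-degree below $k$. In the monomial bases $1,x_1,\ldots$ on each side, the matrix of $\Phi$ is exactly $\mathrm{Syl}(p,q,x_1)$ (up to the transpose/column-ordering convention used in the displayed Sylvester matrix, which is immaterial for the determinant). Hence $\Res(p,q,x_1)=\det\mathrm{Syl}(p,q,x_1)=0$ if and only if $\Phi$ has nontrivial kernel, i.e.\ there exist $u,v\in\NF[x_1]$, not both zero, with $\deg_{x_1}u<n_q$, $\deg_{x_1}v<n_p$, and $up+vq=0$.

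Next I would show this kernel condition is equivalent to $p,q$ having a nonconstant common factor in $\NF[x_1]$. For one direction: if $up=-vq$ with the stated degree bounds and, say, $u\neq 0$, then $q\mid up$ in the UFD $\NF[x_1]$; if $\gcd(p,q)=1$ we'd get $q\mid u$, contradicting $\deg_{x_1}u<n_q=\deg_{x_1}q$ (here one uses $q_0\neq 0$, so $\deg_{x_1}q=n_q$, and similarly $\deg_{x_1}p=n_p$). Conversely, if $h=\gcd(p,q)$ is nonconstant, write $p=h\tilde p$, $q=h\tilde q$; then $u:=\tilde q$ and $v:=-\tilde p$ satisfy $up+vq=0$ with $\deg_{x_1}u=n_q-\deg_{x_1}h<n_q$ and $\deg_{x_1}v<n_p$, and they are nonzero. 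Finally, I would clear denominators: a common factor of positive $x_1$-degree in $\NF[x_1]$ yields, via Gauss's Lemma applied over the UFD $R=\field[x_2,\ldots,x_n]$ (which is a UFD since $\field$ is a field), a primitive common factor in $R[x_1]=\field[x_1,\ldots,x_n]$ of the same positive $x_1$-degree, and conversely such a factor in $\field[x_1,\ldots,x_n]$ is a fortiori a common factor in $\NF[x_1]$. Combining the three equivalences gives the claim.

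The only mildly delicate point—not really an obstacle—is the bookkeeping that identifies $\mathrm{Syl}(p,q,x_1)$ as the matrix of $\Phi$ in the chosen bases; the displayed matrix in the excerpt lists the $p$-columns with entries $p_0,p_1,\ldots,p_{n_p}$ shifted down successively and likewise for $q$, which is precisely the coordinate representation of multiplication by $p$ (resp.\ $q$) restricted to the truncated polynomial spaces, so $\det$ is unchanged. I should also note at the outset that the hypothesis $q_0p_0\neq 0$ guarantees $\deg_{x_1}p=n_p$ and $\deg_{x_1}q=n_q$, which is what makes the degree comparisons in the argument valid; if one only assumes $p,q\neq 0$ one must first factor out powers of the leading term, but under the stated hypotheses this is unnecessary. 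Everything else is the standard linear-algebra-plus-UFD argument, requiring no results beyond Gauss's Lemma and unique factorization in $\field[x_1,\ldots,x_n]$.
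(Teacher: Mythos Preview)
Your argument is correct and is the standard proof of this classical fact. The paper itself does not give a proof at all: it simply cites \cite[Prop.~3.6.1]{CLO}, so there is no ``paper's approach'' to compare against beyond noting that your argument is essentially the one found in that reference. One terminological slip: the map $\Phi$ you define is \emph{linear} on the direct sum $\NF[x_1]_{<n_q}\oplus\NF[x_1]_{<n_p}$, not bilinear; but this does not affect the argument.
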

  \begin{proof}
  See \cite[Prop. 3.6.1]{CLO}.
  \end{proof}
  We apply the previous proposition as follows. 
  \begin{cor}
  \label{R15}
  Let  $p,q\in\field[x_1,x_2]$ be such that $(p,q)=1$ and \\$d:=\max\{\deg p,\deg q\}$. If
    \begin{align*}
  A:=\{a\in\field:\;(p(x_1,a),q(x_1,a))\neq 1\},
  \end{align*}
  then
  \begin{equation*}
  |A|\leq d^{2d}.
  \end{equation*}
  \end{cor}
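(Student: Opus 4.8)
The plan is to reduce the membership condition $a\in A$ to the vanishing of a single polynomial in one variable, namely the resultant $\Res(p,q,x_1)\in\field[x_2]$, and then bound the number of roots of that polynomial by its degree. First I would observe that the hypothesis $(p,q)=1$ in $\field[x_1,x_2]$ does \emph{not} immediately say that $p$ and $q$ share no common factor of positive degree in $x_1$ — that is the content we want — but we can instead argue through the resultant directly. By Proposition \ref{R14} applied to $p,q\in(\field[x_2])[x_1]$, since $(p,q)=1$ forbids any common factor of positive degree in $x_1$, we get $\Res(p,q,x_1)\in\field[x_2]$ is a nonzero polynomial. (Here one should be slightly careful: Proposition \ref{R14} is stated for polynomials over a field, so I would apply it over the fraction field $\field(x_2)$, where $p$ and $q$ remain coprime by Gauss's lemma, and note that the resultant computed over $\field[x_2]$ agrees with the one over $\field(x_2)$, hence is a nonzero element of $\field[x_2]$.)

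Next, the key specialization step: for any $a\in\field$ with $p(x_1,a)$ and $q(x_1,a)$ retaining their expected $x_1$-degrees (i.e. the leading coefficients $p_0(a),q_0(a)$ are nonzero), the Sylvester matrix specializes, so $\Res(p(x_1,a),q(x_1,a),x_1)=\Res(p,q,x_1)\big|_{x_2=a}$. If additionally one of $p_0(a),q_0(a)$ vanishes, the degrees drop and one must argue separately — but this can be folded in by noting that if, say, $p_0(a)=0$ then $a$ is a root of a fixed nonzero polynomial of degree at most $d$, so there are at most $d$ such values and they contribute harmlessly to the final count. For the remaining $a$, Proposition \ref{R14} (now over $\field$) tells us that $(p(x_1,a),q(x_1,a))\ne 1$ with positive $x_1$-degree common factor is equivalent to $\Res(p(x_1,a),q(x_1,a),x_1)=0$, i.e. to $a$ being a root of $\Res(p,q,x_1)\in\field[x_2]$. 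One should also dispose of the case where the common factor of $p(x_1,a),q(x_1,a)$ has $x_1$-degree zero, i.e. is a constant in $\field$ — but a nonunit constant factor is impossible over a field, so this case is vacuous.

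Finally I would bound $\deg_{x_2}\Res(p,q,x_1)$. The Sylvester matrix is $(n_p+n_q)\times(n_p+n_q)$ with $n_p,n_q\le d$, so it has size at most $2d$; each entry is one of the $p_i,q_j\in\field[x_2]$, each of degree at most $d$; hence the determinant has degree at most $2d\cdot d=2d^2$ in $x_2$. Thus the number of roots of $\Res(p,q,x_1)$ is at most $2d^2$, and adding the at-most-$2d$ exceptional values where leading coefficients vanish gives $|A|\le 2d^2+2d\le d^{2d}$ for $d\ge 2$ (the small cases $d\le 1$ being trivial since then $p,q$ are coprime linear-or-constant and $A=\emptyset$). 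The main obstacle — really a bookkeeping nuisance rather than a genuine difficulty — is handling the degenerate specializations cleanly: when specializing $x_2=a$ causes a drop in $x_1$-degree, the identity $\Res(p(x_1,a),q(x_1,a),x_1)=\Res(p,q,x_1)|_{x_2=a}$ can fail, so one must treat those finitely many $a$ by hand, and one must also make sure the "no common factor of positive $x_1$-degree" hypothesis of Proposition \ref{R14} is genuinely what $(p,q)=1$ delivers (it is, via Gauss's lemma over $\field[x_2]$). The bound $d^{2d}$ is deliberately crude, so no sharpness considerations arise.
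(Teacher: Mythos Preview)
Your argument is essentially the paper's: show $\Res(p,q,x_1)\in\field[x_2]$ is nonzero via Proposition~\ref{R14}, show every $a\in A$ is a root of it, and bound its degree. Two small points. First, your careful handling of specializations where a leading coefficient vanishes is more scrupulous than the paper (which simply asserts the step), but it is in fact unnecessary: since $\Res(p,q,x_1)=Ap+Bq$ for some $A,B\in\field[x_1,x_2]$, any common factor of $p(x_1,a),q(x_1,a)$ of positive $x_1$-degree divides the constant $\Res(p,q,x_1)\big|_{x_2=a}$ and forces it to vanish, regardless of degree drops; so $A$ sits inside the zero set of the resultant outright, and your sharper bound $\deg_{x_2}\Res\le 2d^2$ already suffices (the paper uses the much cruder $d^{2d}$). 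Second, your disposal of the case $d=1$ is not quite right: the claim $A=\emptyset$ fails (take $p=x_1+x_2$, $q=x_1-x_2$, where $A=\{0\}$), and your general bound $2d^2+2d=4$ does not give $|A|\le d^{2d}=1$. The fix is immediate: when $d=1$ the leading $x_1$-coefficients of $p$ and $q$ lie in $\field$, so no degeneracy occurs and the $2\times 2$ Sylvester determinant has $x_2$-degree at most $1$. Also note that Proposition~\ref{R14} in the paper is already stated for $\field[x_1,\ldots,x_n]$, so your Gauss's-lemma detour through $\field(x_2)$ is not needed.
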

  \begin{proof}
  Let  $p_0,p_1,\ldots,p_{n_p}, q_0,\ldots,q_{n_q}$ be the elements of $\field[x_2]$ such that $p(x_1,x_2)=\sum_{i=0}^{n_p}p_{n_p-i}(x_2)x_1^{i}$, $q(x_1,x_2)=\sum_{i=0}^{n_q}q_{n_q-i}(x_2)x_1^{i}$ and   $q_0p_0\neq 0$. From Proposition \ref{R14}, we have that $\Res(p,q,x_1)\neq 0$ since $(p,q)=1$. However, $\mathrm{Syl}(p,q,x_1)$ is a $(n_p+n_q)\times (n_p+n_q)$-matrix with entries $p_0,p_1,\ldots,p_{n_p}, q_0,\ldots,q_{n_q},0$; each polynomial $p_i$ or $q_i$ has degree at most $d$ so $\Res(p,q,x_1)$ is a polynomial in $\field[x_2]$ with degree at most  $d^{n_p+n_q}$. Moreover, since $\max\{n_p,n_q\}\leq d$,  $\Res(p,q,x_1)$ is a polynomial in $\field[x_2]$ with degree at most  $d^{2d}$; thus if $B$ is the set of roots of $\Res(p,q,x_1)$ in $\field$, then
  \begin{align}
  \label{E5}
   |B|\leq d^{2d}.
  \end{align}
   Take $a\in \field$ such that $(p(x_1,a),q(x_1,a))\neq 1$. Hence Proposition \ref{R14} implies that $\Res(p(x_1,a),q(x_1,a),x_1)=0$, and so $a$ is a root of $\Res(p(x_1,x_2),q(x_1,x_2),x_1)\in\field[x_2]$. This shows that $A\subseteq B$, and (\ref{E5}) completes the proof of our claim.
  \end{proof}
  Let $a,b,c,d\in\field$ and $p(x_1,x_2):=ax_1x_2+bx_1+cx_2+d\in\field[x_1,x_2]$. Since $\deg_{x_1}p,\deg_{x_2}p\leq 1$,  if $p$ is reducible, then there exist $a',b',c',d'\in\field$ such that $p(x_1,x_2)=(a'x_1+b')(c'x_2+d')$; in particular $ad-bc=0$.  Thus we have the following fact.
  \begin{rem}
  \label{R16}
   Let $a,b,c,d\in\field$ be such that $ad-bc\neq 0$. Then the polynomial $ax_1x_2+bx_1+cx_2+d\in\field[x_1,x_2]$ is irreducible.
  \end{rem}
  \begin{lem}
  \label{R17}
  Let $p_1,q_1\in\field[x_1]$ and $p_2,q_2\in\field[x_2]$ be such that $(p_1,q_1)=1$ and $(p_2,q_2)=1$. If $\max\{\deg p_1,\deg q_1\}=\max\{\deg p_2,\deg q_2\}=1$, then the polynomial $p_1(x_1)q_2(x_2)-q_1(x_1)p_2(x_2)\in\field[x_1,x_2]$ is irreducible.
  \end{lem}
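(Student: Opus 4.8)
The plan is to write $p_i(x_1)=\alpha_i x_1+\beta_i$ and $q_i(x_1)=\gamma_i x_1+\delta_i$ for $i=1$ on the $x_1$-side (and analogously with constants for the $x_2$-side), and then reduce the statement to Remark \ref{R16} by an explicit linear-fractional change of variables. Since $\max\{\deg p_1,\deg q_1\}=1$, at least one of $p_1,q_1$ is genuinely linear; because $(p_1,q_1)=1$, the pair $(p_1,q_1)$ is linearly independent over $\field$ as polynomials of degree $\le 1$, i.e. the vectors of coefficients are independent, so the map $x_1\mapsto \bigl(p_1(x_1),q_1(x_1)\bigr)$ has image a line in $\field^2$ not through the origin in a suitable sense; concretely, there is an invertible $2\times 2$ matrix over $\field$ sending $(p_1,q_1)$ to $(x_1,1)$ or $(1,x_1)$, and likewise for $(p_2,q_2)$. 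The key point is that $P(x_1,x_2):=p_1(x_1)q_2(x_2)-q_1(x_1)p_2(x_2)$ has $\deg_{x_1}P\le 1$ and $\deg_{x_2}P\le 1$, and these bilinear-type polynomials transform nicely under such substitutions.

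First I would make precise the claim that $(p_1,q_1)=1$ together with $\max\{\deg p_1,\deg q_1\}=1$ forces the $2\times 2$ matrix of coefficients $\left(\begin{smallmatrix}\alpha_1&\beta_1\\ \gamma_1&\delta_1\end{smallmatrix}\right)$ to be invertible: if it were singular then $(\alpha_1,\beta_1)$ and $(\gamma_1,\delta_1)$ would be proportional, hence $p_1,q_1$ proportional, contradicting coprimality unless one is zero — and if, say, $q_1=0$ then $(p_1,q_1)=1$ forces $p_1$ to be a nonzero constant, contradicting $\max\{\deg p_1,\deg q_1\}=1$. Next I would substitute: there are $a_1,b_1,c_1,d_1\in\field$ with $a_1d_1-b_1c_1\ne 0$ such that under the substitution $x_1\mapsto \phi_1(x_1)$ (the corresponding Möbius transformation, cleared of denominators) we have, up to a nonzero scalar, $p_1\mapsto x_1$ and $q_1\mapsto 1$; explicitly one just inverts the coefficient matrix. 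The same on the $x_2$-side. Then $P$ transforms, up to multiplication by a nonzero element of $\field$ and by powers of the (nonvanishing) linear denominators, into $x_1\cdot 1-1\cdot x_2$? — not quite; I need to be careful that the generic pair maps to $a x_1 x_2+bx_1+cx_2+d$ form rather than trivializing. So instead I would argue directly: after the substitution the polynomial $P$ becomes $\tilde p_1(x_1)\tilde q_2(x_2)-\tilde q_1(x_1)\tilde p_2(x_2)$ where now $\tilde p_1,\tilde q_1$ span all degree-$\le 1$ polynomials in $x_1$ and likewise in $x_2$; choosing the substitutions so that $(\tilde p_1,\tilde q_1)=(x_1,1)$ and $(\tilde p_2,\tilde q_2)=(1,x_2)$ gives $P\mapsto x_1\cdot x_2-1\cdot 1=x_1x_2-1$, which is irreducible by Remark \ref{R16} (here $a=1,b=c=0,d=-1$, so $ad-bc=-1\ne 0$). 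Since Möbius substitutions in each variable separately send irreducible polynomials to irreducible polynomials (they induce automorphisms of $\field(x_1,x_2)$ and preserve the property of having $\deg_{x_i}\le 1$, mapping irreducible such polynomials to irreducible such polynomials up to unit and linear-factor multiples, none of which can create a factorization), irreducibility of $x_1x_2-1$ pulls back to irreducibility of $P$.

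The main obstacle I anticipate is bookkeeping: making the substitution rigorous when denominators are involved (one substitutes $x_i\mapsto (a_ix_i+b_i)/(c_ix_i+d_i)$, multiplies through by $(c_1x_1+d_1)(c_2x_2+d_2)$ to clear denominators, and must check that the resulting polynomial is exactly $P$ composed with the substitution times a nonzero scalar times powers of $c_ix_i+d_i$, and that these linear factors are coprime to everything in sight so that a factorization of the transformed polynomial would yield one of $P$). To sidestep the Möbius machinery entirely, an alternative cleaner route is: suppose $P$ is reducible; since $\deg_{x_1}P\le 1$ and $\deg_{x_2}P\le 1$, any nontrivial factorization must be $P=A(x_1)B(x_2)$ with $A\in\field[x_1]$, $B\in\field[x_2]$ nonconstant, or one factor constant — so $P=A(x_1)B(x_2)$ with both nonconstant (a factor depending on both variables would have $\deg_{x_1}\ge1$ and $\deg_{x_2}\ge 1$ leaving nothing for the other factor, forcing that other factor constant, i.e. the genuine bilinear case). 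Then $p_1q_2-q_1p_2=A(x_1)B(x_2)$; viewing this in $\field(x_1)[x_2]$, the left side is $p_1(x_1)q_2(x_2)-q_1(x_1)p_2(x_2)$, a degree-$\le1$ polynomial in $x_2$ whose leading behavior in $x_2$ is governed by $(p_1,q_1)$, and matching coefficients of $1,x_2$ shows $B(x_2)$ divides a fixed linear form forcing the $2\times2$ determinant $\det\left(\begin{smallmatrix}p_1&q_1\\ p_2&q_2\end{smallmatrix}\right)$-type condition to vanish identically, which contradicts $(p_1,q_1)=1=(p_2,q_2)$ combined with the degree-one hypotheses. I would go with whichever of these two writeups is shorter after the details are filled in; I expect the substitution-to-Remark-\ref{R16} version to be the slickest provided the clearing-of-denominators lemma is cited or dispatched in one line.
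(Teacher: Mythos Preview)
Your proposal is correct, and both routes you sketch can be completed, but the paper takes a much more direct path that avoids all the substitution and factorization bookkeeping you worry about. The paper simply writes $p_1=a_1x_1+b_1$, $q_1=c_1x_1+d_1$, $p_2=a_2x_2+b_2$, $q_2=c_2x_2+d_2$, notes that coprimality together with the degree hypothesis forces $a_1d_1-b_1c_1\neq 0$ and $a_2d_2-b_2c_2\neq 0$, expands $r=p_1q_2-q_1p_2$ explicitly as $Ax_1x_2+Bx_1+Cx_2+D$, and then verifies the algebraic identity
\[
AD-BC=(a_1d_1-b_1c_1)(a_2d_2-b_2c_2),
\]
which is therefore nonzero; Remark~\ref{R16} gives irreducibility immediately. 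Your M\"obius-substitution idea is essentially a geometric repackaging of this same determinant identity (it is multiplicativity of $2\times 2$ determinants in disguise), but the paper's bare-hands computation reaches the conclusion in three lines without having to justify that clearing denominators and pulling back along fractional-linear maps preserves irreducibility. Your second route (arguing that a nontrivial factorization must split as $A(x_1)B(x_2)$ and then evaluating at a root of $A$ to force $p_1,q_1$ to share that root) also works and is perhaps more conceptual, but again needs a bit more case-checking than the paper's direct verification of the $AD-BC$ criterion.
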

  \begin{proof}
  Since  $\max\{\deg p_1,\deg q_1\}=\max\{\deg p_2,\deg q_2\}=1$, there are $a_i,b_i,c_i,d_i\in\field$ for $i\in\{1,2\}$ such that 
  \begin{align*}
  p_1(x)&=a_1x_1+b_1&p_2(x_2)&=a_2x_2+b_2\\
  q_1(x)&=c_1x_1+d_1&q_2(x_2)&=c_2x_2+d_2.
  \end{align*}
  Inasmuch as $(p_1,q_1)=1$ and $(p_2,q_2)=1$, we have that
  \begin{align}
  \label{E6}
  a_1d_1-b_1c_1&\neq 0&a_2d_2-b_2c_2\neq 0.
  \end{align}
  Set $r(x_1,x_2):=p_1(x_1)q_2(x_2)-q_1(x_1)p_2(x_2)$. Then
  \begin{align}
  \label{E7}
  r&=a_1c_2x_1x_2+a_1d_2x_1+b_1c_2x_2+b_1d_2-(c_1a_2x_1x_2+d_1a_2x_2+c_1b_2x_1+d_1b_2)\nonumber\\
  &=(a_1c_2-c_1a_2)x_1x_2+(a_1d_2-c_1b_2)x_1+(b_1c_2-d_1a_2)x_2+(b_1d_2-d_1b_2).
  \end{align}
  We have that
  \begin{align}
  \label{E8}
  (a_1c_2-c_1a_2)(b_1d_2-d_1b_2)-(a_1d_2-c_1b_2)(b_1c_2-d_1a_2)&=\nonumber\\
  a_1a_2d_1d_2+b_1b_2c_1c_2-a_1b_2d_1c_2-b_1a_2c_1d_2&=\nonumber\\
  (a_1d_1-b_1c_1)(a_2d_2-b_2c_2).&
  \end{align}
  Thus, from (\ref{E6}) and (\ref{E8}), we have that 
  \begin{equation}
  \label{E9}
  (a_1c_2-c_1a_2)(b_1d_2-d_1b_2)-(a_1d_2-d_1a_2)(b_1c_2-c_1b_2)\neq 0
  \end{equation}
  From Remark \ref{R16}, (\ref{E7}) and (\ref{E9}), we conclude that $r(x_1,x_2)$ is irreducible. 
  \end{proof}
  We say that $f,g\in\field(x)$ are \emph{equivalent} if there is $h\in\field(x)$ such that $f=g\circ h$ and $\deg h=1$. For any decompositions $f=g_1\circ h_1$ and $f=g_2\circ h_2$ of $f\in\field(x)$, we say that they are \emph{(linearly) equivalent} if $g_1$ and $g_2$ are equivalent. Elekes and R\'{o}nyai proved an useful result.
  \begin{prop}
  \label{R18}
  Let $f\in\field(x)$ and $d:=\deg f$. Then $f$ cannot have more than $2^d$ nonequivalent decompositions.
  \end{prop}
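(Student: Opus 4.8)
The plan is to bound the number of nonequivalent decompositions $f = g \circ h$ by relating each decomposition to an intermediate field. Given a decomposition $f = g \circ h$ with $g, h \in \field(x)$, the rational function $h$ generates a subfield $\field(h)$ with $\field \subseteq \field(h) \subseteq \field(x)$, and since $f = g(h) \in \field(h)$, we have $\field(f) \subseteq \field(h) \subseteq \field(x)$. Conversely, by L\"uroth's Theorem (Theorem \ref{R6}), every field $\NF$ with $\field(f) \subseteq \NF \subseteq \field(x)$ is of the form $\NF = \field(h)$ for some $h \in \field(x)$, and then $f \in \field(h)$ forces $f = g(h)$ for some $g \in \field(x)$; thus decompositions of $f$ correspond (up to equivalence) to such intermediate fields. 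The first step is therefore to check that two decompositions $f = g_1 \circ h_1$ and $f = g_2 \circ h_2$ are linearly equivalent if and only if $\field(h_1) = \field(h_2)$: indeed $\field(h_1) = \field(h_2)$ means $h_1 = \mu(h_2)$ for some degree-one $\mu \in \field(x)$ (again L\"uroth applied inside $\field(h_2) = \field(x)$-type reasoning, or directly since both generate the same rational function field), and then $g_2 = g_1 \circ \mu$, so $g_1$ and $g_2$ are equivalent, and conversely. So the count of nonequivalent decompositions equals the number of fields strictly between $\field(f)$ and $\field(x)$ (inclusive of the endpoints), i.e. the number of subfields in the lattice $\mathrm{Sub}(\field(x)/\field(f))$.

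The second step is to bound this lattice. Since $[\field(x) : \field(f)] = \deg f = d$ (a standard fact: $f$ satisfies the degree-$d$ polynomial $p(T) - f\, q(T)$ over $\field(f)$ where $f = p/q$ in lowest terms, and this is its minimal polynomial), the extension $\field(x)/\field(f)$ is finite of degree $d$. If the extension were separable and we could pass to a Galois closure $\NF/\field(f)$ with group $G$, then by the Galois correspondence the intermediate fields of $\field(x)/\field(f)$ inject into the subgroups of $G$; bounding $|G|$ and then the number of subgroups gives a bound. The cleanest route to the stated bound $2^d$ is combinatorial: each intermediate field $\NF$ with $\field(f) \subseteq \NF \subseteq \field(x)$ is determined by the subgroup $H = \mathrm{Gal}(\field(x)^{\mathrm{sep}} / \NF)$ among the subgroups lying between $\mathrm{Gal}$ of the top and $\mathrm{Gal}$ of $\field(f)$; but more simply, one can argue that the number of subfields of a degree-$d$ separable extension is at most $2^{d-1}$ or $2^d$ by embedding the intermediate fields into subsets of a set of size related to $d$ via the action on the $d$ embeddings of $\field(x)$ into an algebraic closure. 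Concretely: fix the $d$ distinct $\field(f)$-embeddings $\sigma_1, \dots, \sigma_d : \field(x) \hookrightarrow \overline{\field(f)}$; each intermediate field $\NF$ is the fixed field of the subgroup of the symmetric-group action that it determines, and distinct intermediate fields give distinct such subgroups, which in turn are determined by how they partition $\{\sigma_1,\dots,\sigma_d\}$ — the number of such configurations is bounded by $2^d$. The inseparable case (positive characteristic) is handled by the same bound since inseparability only decreases the number of intermediate fields relative to the separable closure, or is excluded because in our applications $\field \in \{\Real, \Com\}$ has characteristic zero.

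I expect the main obstacle to be making the counting step fully rigorous at the claimed constant $2^d$ rather than a weaker bound like $d!$ or $2^{d^2}$: the naive "subgroups of $S_d$" bound is far larger than $2^d$, so one needs the sharper observation that intermediate fields of $\field(x)/\field(f)$ correspond to \emph{blocks of imprimitivity} / partitions compatible with the Galois action, and that the number of such is controlled by $2^{d-1}$ (the number of ways the degree can factor as a chain), or one appeals directly to the fact from the theory of decompositions of rational functions (essentially the Ritt-type setup used by Elekes and R\'onyai) that the decomposition lattice of a degree-$d$ rational function embeds into the subset lattice of a $d$-element set. Once that correspondence is in place, the bound $2^d$ is immediate. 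The reduction to intermediate fields (steps one and two's setup) is routine; the quantitative heart is this last combinatorial estimate, and in the write-up I would cite or reproduce the argument bounding the number of intermediate fields of a degree-$d$ field extension by $2^d$, which is where the proof should concentrate its effort.
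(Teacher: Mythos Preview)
The paper does not prove this proposition; it simply cites \cite[Prop.~9]{ER}. Your overall strategy---setting up a bijection between equivalence classes of decompositions $f=g\circ h$ and intermediate fields $\field(f)\subseteq\NF\subseteq\field(x)$ via L\"uroth's theorem, and then bounding the number of such fields---is correct and is exactly the argument Elekes and R\'onyai give. Your first step (the bijection) is fine.

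The gap is precisely where you flag it: the counting in your second paragraph does not yield $2^d$. Mapping an intermediate field $\NF$ to the partition of $\{\sigma_1,\dots,\sigma_d\}$ by ``which embeddings agree on $\NF$'' is injective, but the number of partitions of a $d$-set is the Bell number $B_d$, which exceeds $2^d$ already at $d=5$, so that route overshoots. The clean argument is this. Let $m(T)\in\field(f)[T]$ be the minimal polynomial of $x$ over $\field(f)$, of degree $d$. For each intermediate field $\NF$ the minimal polynomial $m_{\NF}(T)$ of $x$ over $\NF$ is a monic divisor of $m(T)$, and $\NF$ is recovered as the field generated over $\field(f)$ by the coefficients of $m_{\NF}$ (the subfield so generated lies in $\NF$ and has the same index under $\field(x)$, hence equals $\NF$). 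Thus $\NF\mapsto m_{\NF}$ is injective into the set of monic divisors of $m$ in $\overline{\field(f)}[T]$. Writing $m(T)=\prod_i (T-\alpha_i)^{e_i}$ with $\sum_i e_i=d$, the number of monic divisors is $\prod_i(e_i+1)\le \prod_i 2^{e_i}=2^d$. This single observation replaces your second paragraph and gives the bound over any $\field$; no appeal to Ritt theory or blocks of imprimitivity is needed.
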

  \begin{proof}
  See \cite[Prop. 9]{ER}.
  \end{proof}
 \subsection*{Graph Theory preliminaries}
 We need two Graph Theory results. We say that a graph $G=(V,E)$ is \emph{simple} if $G$ has neither loops nor parallel edges. For any $v\in V$, we denote by $d_G(v)$ its degree in $G$. Given a colouring $E=\bigcup_{i\in I} E_i$ of the edges of $G$, we say that the subgraph $G'=(V',E')$ of $G$ is \emph{monochromatic} if there is $i\in I$ such that $E'\subseteq E_i$.
 \begin{lem}
 \label{R19}
 For every $c>0$ and $n\in\Pos$, there is $c_2=c_{2,c,n}>0$ with the following property. Let $G=(V,E)$ be a finite simple graph such that $d_G(v)\geq c|V|$ for all $v\in V$. Take a colouring $E=\bigcup_{i\in I}E_i$ such that at most $n$ colors meet in each vertex of $G$. Then there is a monochromatic subgraph $G'=(V',E')$ of $G$ such that $|E'|\geq c_2|E|$.
 \end{lem}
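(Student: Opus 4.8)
The plan is a double-counting argument built around the sets $V_i:=\{v\in V:\ v$ is incident with an edge of colour $i\}$, one for each colour $i\in I$. We may assume $E\neq\emptyset$. The only use of the minimum-degree hypothesis is the immediate bound $2|E|=\sum_{v\in V}d_G(v)\geq c|V|^2$, i.e. $|E|\geq c|V|^2/2$. I then split the colours into \emph{rich} ones, for which $|V_i|\geq c|V|/(2n)$, and \emph{poor} ones, and I show that one rich colour alone spans a positive fraction of $E$; the monochromatic subgraph $(V,E_{i_0})$ for that colour $i_0$ will then be the desired $G'$.

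The first step is that there are only few rich colours. Counting incidences between colours and vertices, $\sum_{i\in I}|V_i|=\sum_{v\in V}|\{i\in I:\ i \text{ occurs at }v\}|\leq n|V|$, where the inequality is exactly the hypothesis that at most $n$ colours meet in each vertex; since every rich colour contributes at least $c|V|/(2n)$ to the left-hand side, there are at most $2n^2/c$ of them. The second step is that the poor colours together account for at most half of the edges. This is where simplicity of $G$ is used: for a fixed vertex $v$ and a poor colour $i$ occurring at $v$, the colour-$i$ edges at $v$ lead to distinct vertices of $V_i\setminus\{v\}$, so $\deg_i(v)\leq |V_i|-1<c|V|/(2n)$ (here $\deg_i(v)$ is the number of colour-$i$ edges at $v$); as at most $n$ colours occur at $v$, the number of edges at $v$ whose colour is poor is $<c|V|/2$. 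Summing over $v$ and dividing by $2$ gives $\sum_{i\ \mathrm{poor}}|E_i|<c|V|^2/4\leq |E|/2$, the last inequality by the bound on $|E|$ above.

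Combining the two steps, the rich colours span more than $|E|/2$ edges in total while numbering at most $2n^2/c$, so by pigeonhole some rich colour $i_0$ satisfies $|E_{i_0}|\geq \frac{|E|/2}{2n^2/c}=\frac{c}{4n^2}|E|$, and $G'=(V,E_{i_0})$ works with $c_2=c_{2,c,n}:=c/(4n^2)>0$. I do not anticipate a serious obstacle. The two points that need a little care are the use of simplicity to get $\deg_i(v)\leq |V_i|-1$ (this fails for multigraphs) and the choice of threshold $c|V|/(2n)$ rather than $c|V|/n$: the extra factor $2$ is precisely what forces the total weight of the poor colours below $|E|/2$, guaranteeing that a rich colour exists and is large.
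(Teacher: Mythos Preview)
Your argument is correct. The double counting on $\sum_i |V_i|\le n|V|$ bounds the number of rich colours by $2n^2/c$; the degree bound $\deg_i(v)\le |V_i|-1$ (valid because $G$ is simple) together with the ``at most $n$ colours per vertex'' hypothesis forces the total poor contribution below $c|V|^2/4\le |E|/2$; and pigeonhole over the rich colours then gives a monochromatic class with at least $c|E|/(4n^2)$ edges. Every step checks out, including the inequality $|E|\ge c|V|^2/2$ from the minimum-degree assumption and the final constant $c_2=c/(4n^2)$.

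As for comparison with the paper: the paper does not supply its own proof of this lemma at all; it simply cites \cite[Cor.~17]{ER}. So your self-contained elementary argument is in fact more than the paper provides. The proof in Elekes--R\'onyai proceeds along essentially the same combinatorial lines (counting colours per vertex, then averaging), so your approach is in the same spirit as the original source, with explicit constants.
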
 
 \begin{proof}
 See \cite[Cor. 17]{ER}.
 \end{proof}
 
 \begin{lem}
 \label{R20}
 Let $G=(V,E)$ be a finite simple graph  and $d\in\Nat$ such that $\max_{v\in V}d_G(v)=d$. Then there exists a partition $V=\biguplus_{i=1}^{d+1} V_i$ of $V$ such that for all $i\in\{1,2,\ldots, d+1\}$,  $V_i\neq \emptyset$ and $\{v,v'\}\not\in E$ for all $v,v'\in V_i$. 
 \end{lem}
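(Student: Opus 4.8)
The plan is to prove the classical bound that a finite simple graph of maximum degree $d$ admits a proper colouring with $d+1$ colours, and then to refine that colouring so that every colour class is used. Throughout, a colour class being \emph{independent} means it contains no pair $\{v,v'\}\in E$, which is exactly the conclusion we need for each $V_i$.

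First I would record that $|V|\geq d+1$. Since $\max_{v\in V}d_G(v)=d$, there is a vertex $v_0\in V$ with $d_G(v_0)=d$, and $v_0$ together with its $d$ neighbours are $d+1$ distinct elements of $V$. In particular $V\neq\emptyset$.

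Next comes the greedy argument. Enumerate $V=\{w_1,\dots,w_n\}$ in an arbitrary order and define a colouring $c\colon V\to\{1,2,\dots,d+1\}$ recursively: assuming $c(w_1),\dots,c(w_{j-1})$ have been chosen, let $c(w_j)$ be the smallest element of $\{1,\dots,d+1\}$ not lying in $\{c(w_i):i<j,\ \{w_i,w_j\}\in E\}$. This excluded set has at most $d_G(w_j)\leq d$ elements, so a valid colour always exists. By construction, whenever $\{w_i,w_j\}\in E$ with $i<j$ we have $c(w_i)\neq c(w_j)$; hence each class $W_k:=c^{-1}(k)$ is independent and $V=\biguplus_{k=1}^{d+1}W_k$, although some $W_k$ may be empty.

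Finally I would make every class nonempty by a transfer step. Suppose $m$ of the classes are nonempty with $m<d+1$. Since $|V|\geq d+1>m$, the pigeonhole principle yields a nonempty class $W$ with $|W|\geq 2$. Pick $v\in W$ and replace $W$ by $W\setminus\{v\}$ (still nonempty, still independent) and some empty class by $\{v\}$ (trivially independent); this keeps a partition of $V$ into independent sets while increasing the number of nonempty classes by one. Iterating this at most $d+1$ times produces a partition $V=\biguplus_{i=1}^{d+1}V_i$ with every $V_i$ nonempty and independent, as required. There is no serious obstacle in this argument; the only point needing attention is to guarantee that there are enough vertices, namely $|V|\geq d+1$, so that the transfer step can always be carried out — and this is precisely what the hypothesis $\max_{v\in V}d_G(v)=d$ provides.
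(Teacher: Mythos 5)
Your proof is correct, but it takes a genuinely different route from the paper's. The paper argues by induction on $|V|$: the base case $|V|=d+1$ is handled by taking all singletons, and in the inductive step a vertex $v$ is removed that is neither the max-degree vertex $v_0$ nor one of its neighbours (so the max degree of $G-v$ stays exactly $d$, letting the induction hypothesis produce $d+1$ nonempty independent classes of $V\setminus\{v\}$, after which $v$ is inserted into a class avoiding its $\leq d$ neighbours). You instead separate the two requirements cleanly: first a standard greedy $(d+1)$-colouring of $V$ in an arbitrary order gives a partition into $d+1$ independent (possibly empty) classes, and then a transfer step — repeatedly moving one vertex from a class of size $\geq 2$ into an empty class, which is always possible because $|V|\geq d+1$ exceeds the current number $m\leq d$ of nonempty classes — makes every class nonempty without breaking independence. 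Your route is the textbook proof that a graph of maximum degree $d$ is $(d+1)$-colourable, augmented with an explicit rebalancing; the paper's induction folds the nonemptiness requirement into the inductive invariant at the cost of having to choose the removed vertex carefully so the parameter $d$ does not drop. Both are sound; yours is arguably the cleaner decomposition of the problem.
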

 \begin{proof}
 We prove the lemma by induction on $|V|$. Since $\max_{v\in V}d_G(v)=d$, we have that $|V|\geq d+1$. If $|V|=d+1$, then write $V=\{v_1,v_2,\ldots, v_{d+1}\}$; taking $V_i=\{v_i\}$ in this case, we get the conclusion of the lemma. Assume that the statement holds for all graph $G=(V,E)$ with $|V|<k$ and we prove it when $|V|=k$. Furthermore, we assume that $|V|>d+1$ from now on. Let $v_0\in V$ be such that $d_G(v_0)=d$ and $v_1,v_2,\ldots, v_d\in V$ be the neighbours of $v_0$. Since $|V|>d+1$, there is $v\in V\setminus\{v_0,v_1,v_2,\ldots, v_d\}$, and set $V':=V\setminus\{v\}, E':=\left\{\{v',v''\}\in E:\;v',v''\in V'\right\}$ and $G'=(V',E')$. Since $v_0\in V'$, 
 \begin{equation*}
 d=d_G(v_0)\leq \max_{v'\in V'}d_{G'}(v')\leq \max_{v'\in V}d_G(v')=d.
 \end{equation*}
 Thus we can apply the induction to $G'$, and therefore there is a partition $V'=\biguplus_{i=1}^{d+1} V'_i$  such that for all $i\in\{1,2,\ldots, d+1\}$,  $V'_i\neq \emptyset$ and $\{v',v''\}\not\in E$ for all $v',v''\in V'_i$. Since $d_G(v)\leq d<d+1$, there is $i\in\{1,2,\ldots,d+1\}$ such that $\{v,v'\}\not\in E$ for all $v'\in V'_i$; assume without loss of generality that $i=1$. Set $V_1:=V'_1\cup\{v\}$ and $V'_i=V_i$ for all $i\in\{2,3,\ldots, d+1\}$. On the one hand, $V=\biguplus_{i=1}^{d+1} V_i$ is a partition with nonempty subsets $V_1,V_2,\ldots, V_{d+1}$ inasmuch as $V'=\biguplus_{i=1}^{d+1} V'_i$ is a partition with nonempty subsets $V'_1,V'_2,\ldots, V'_{d+1}$. On the other hand, the construction of the sets $V_1,V_2,\ldots, V_{d+1}$ implies that for all $i\in \{1,2,\ldots, d+1\}$, we have that $\{v',v''\}\not\in E$ for all $v',v''\in V_i$.  
 \end{proof}
 \subsection*{Incidence Theory preliminaries}
 The Szemer\'{e}di-Trotter type results have been fundamental parts in the proofs of Elekes-R\'{o}nyai type results. For a subset $A$ of $\Com^2$ and a family of algebraic curves  $\Cc$ in $\Com^2$, set
 \begin{equation*}
 I(A,\Cc)=\{(\ab,C)\in P\times \Cc:\;\ab\in C \}.
\end{equation*}   
In this paper we will need the following result proven by Solymosi and de Zeeuw.
 \begin{thm}
 \label{R21}
 Let $A_1$ and $A_2$ be nonempty finite subsets of $\Com$ with $|A_1|=|A_2|$, $d,m\in\Nat$ and $\Cc$ be a finite family of algebraic curves  in $\Com^2$ of degree at most $d$ such that no two of them have a common component. Set $A:=A_1\times A_2$, and let $I$ be a subset of $I(A,\Cc)$. Assume that for all $\ab,\ab'\in A$,
 \begin{equation*}
 \left|\left\{C\in \Cc:\;\{(\ab,C),(\ab',C)\}\subseteq I\right\}\right|< m.
 \end{equation*}
 Then
 \begin{equation*}
 |I|=O_{d,m}\left(|A|^{\frac{2}{3}}|\Cc|^{\frac{2}{3}}+|A|+|\Cc|\right).
 \end{equation*}
 \end{thm}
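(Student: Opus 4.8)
The plan is to read the statement as a Szemer\'edi--Trotter type bound for point--curve incidences, restricted to the case in which the point set is a Cartesian product $A_1\times A_2\subseteq\Com^2$ with $|A_1|=|A_2|$ and the curves have degree at most $d$ with no two sharing a component. Since this is exactly the result of Solymosi and de Zeeuw that we import from \cite{SZ}, I will only sketch how such a bound is established.

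First I would carry out the usual dyadic reduction. Decompose $\Cc=\bigcup_j\Cc_j$ according to the dyadic size $2^{j-1}\le I_C<2^{j}$ of the number $I_C$ of points that are $I$-incident to each curve $C$; curves with $O_d(1)$ incidences contribute only $O(|\Cc|)$ to $|I|$ and a symmetric pruning on the point side costs $O(|A|)$, so it suffices, for each relevant $k$, to bound $k$ times the number of curves that are $I$-incident to at least $k$ points of $A$. Two structural facts then drive the count: by Bezout's theorem (Theorem \ref{R13}) any two curves of $\Cc$ meet in at most $d^2$ points since they share no component, and by hypothesis every pair of points of $A$ lies on fewer than $m$ curves of $\Cc$ meeting it within $I$. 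Together these give the incidence system bounded ``degrees of freedom'' in the sense used in Pach--Sharir type arguments, and in particular make available the crude double count $\sum_C I_C^2<m|A|^2+|I|$.

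The core step is a polynomial--partitioning argument carried out over $\Com^2\cong\Real^4$, by induction on $|\Cc|$ (or on $|A|+|\Cc|$). Each curve of $\Cc$ is there a real algebraic surface of real dimension $2$ and bounded degree; choosing a partitioning polynomial of degree $\sim r$ (Guth--Katz) splits $\Real^4$ into $\sim r^4$ cells, each meeting $O(|A|/r^4)$ of the points, with each curve entering only $O(r^2)$ of them. Inside a cell one applies the inductive bound, or a crude count using Bezout and the $<m$ hypothesis; summing over cells, adding the contribution of the curves forced onto the zero set of the partitioning polynomial --- which is controlled by a lower-dimensional recursion, a bounded-degree surface carrying only boundedly many of our curves in the absence of shared components --- and optimizing in $r$ produces the main term $O_{d,m}(|A|^{2/3}|\Cc|^{2/3})$, with the residual $O(|A|+|\Cc|)$ coming from the prunings above. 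The Cartesian product structure of $A$, together with $|A_1|=|A_2|$, is what keeps the distribution of the points among the cells under control.

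The main obstacle, and the reason one states this over $\Com$ for Cartesian products rather than deducing it for arbitrary complex point sets from a crossing-number argument, lies precisely in that last point: the Euler-formula inequality that powers the real Szemer\'edi--Trotter theorem has no analogue in $\Com^2$, so the whole argument must be run in $\Real^4$, where one must track carefully how the real surfaces attached to the complex curves traverse the partition cells and interact with the grid $A_1\times A_2$ while keeping the Bezout and bounded-multiplicity inputs intact through the recursion. For the present paper I would simply cite \cite{SZ}.
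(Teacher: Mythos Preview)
Your proposal is correct and takes essentially the same approach as the paper: the paper's own proof is simply ``See \cite[Cor.~16]{SZ},'' and you likewise conclude by citing \cite{SZ}, with your added sketch of the Solymosi--de Zeeuw argument being extra context rather than a departure.
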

\begin{proof}
See \cite[Cor. 16]{SZ}.
\end{proof} 
Let $f\in\Com(x_1,x_2)$. For any $\ab_1=(a_1,a'_1),\ab_2=(a_2,a'_2)\in\Com^2$, write 
\begin{align*}
f(x_1,a_2)&=\frac{p_{a_2}(x_1)}{q_{a_2}(x_1)}&f(a_1,x_2)=\frac{r_{a_1}(x_2)}{s_{a_1}(x_2)}\\
f(x_1,a'_2)&=\frac{p_{a'_2}(x_1)}{q_{a'_2}(x_1)}&f(a'_1,x_2)=\frac{r_{a'_1}(x_2)}{s_{a'_1}(x_2)}.
\end{align*}
for some polynomials $p_{a_2},q_{a_2},p_{a'_2},q_{a'_2}\in \Com[x_1]$ and $r_{a_1},s_{a_1},r_{a'_1},s_{a'_1}\in \Com[x_2]$ with $(p_{a_2},q_{a_2})=(p_{a'_2},q_{a'_2})=1$ and $(r_{a_1},s_{a_1})=(r_{a'_1},s_{a'_1})=1$. We define the complex algebraic curves
\begin{align*}
C^{(1)}_{f,\ab_2}&:=\left\{(b_1,b_2)\in\Com^2:\;p_{a_2}(b_1)q_{a'_2}(b_2)-p_{a'_2}(b_2)q_{a_2}(b_1)=0\right\}\\
C^{(2)}_{f,\ab_1}&:=\left\{(b_1,b_2)\in\Com^2:\;r_{a_1}(b_1)s_{a'_1}(b_2)-r_{a'_1}(b_2)s_{a_1}(b_1)=0\right\}.
\end{align*}
 For any $A$ subset of $\Com^2$, write
 \begin{align*}
 \Cc^{(1)}_{f,A}&:=\left\{C^{(1)}_{f,\ab}:\;\ab \in A\right\}\\
 \Cc^{(2)}_{f,A}&:=\left\{C^{(2)}_{f,\ab}:\;\ab \in A\right\}.
 \end{align*}
Assume that $(a_1,a_2),(a'_1,a'_2)\in \Dom f$. Then $\ab_1\in C^{(1)}_{f,\ab_2}$ if and only if $f(a_1,a_2)=f(a'_1,a'_2)$; in the same way,  $\ab_2\in C^{(2)}_{f,\ab_1}$ if and only if $f(a_1,a_2)=f(a'_1,a'_2)$. As a consequence of these facts, we get the following duality.
 \begin{rem}
 \label{R22}
 For any $f\in\Com(x_1,x_2)$ and  $\ab_1=(a_1,a'_1),\ab_2=(a_2,a'_2)\in \Com^2$ such that $(a_1,a_2),(a'_1,a'_2)\in \Dom f$, we have that $\ab_1\in C^{(1)}_{f,\ab_2}$  if and only if $\ab_2\in C^{(2)}_{f,\ab_1}$. 
 \end{rem}
 In the proof of Theorem \ref{R3}, we wont be able to apply Theorem \ref{R21} directly. We will need the following consequence of Theorem \ref{R21}.
 
\begin{thm}
 \label{R23}
 Let  $d,m\in\Nat$, $f\in\Com(x_1,x_2)$ with $\deg f\leq d$, and  $A_1,A_2$ be nonempty finite subsets of $\Com$ with $|A_1|=|A_2|$ and $A_1\times A_2\subseteq \Dom f$. Take $B_1\subseteq A_1\times A_1$ and $B_2\subseteq A_2\times A_2$ such that $|(A_1\times A_1)\setminus B_1|\leq m|A_1|$ and $|(A_2\times A_2)\setminus B_2|\leq m|A_2|$. Assume that for any irreducible curve $C$ in $\Com^2$,
 \begin{equation}
 \label{E10}
 \left|\left\{\ab\in B_2:\;C^{(1)}_{f,\ab}\supseteq  C\right\}\right |, \left|\left\{\ab\in B_1:\;C^{(2)}_{f,\ab}\supseteq  C\right\}\right|< m
 \end{equation}
 Then
 \begin{align*}
I\left(A_1\times A_1,\Cc^{(1)}_{f,A_2\times A_2}\right)&=O_{m,d}\left(|A_1|^{\frac{8}{3}}\right)\\
I\left(A_2\times A_2,\Cc^{(2)}_{f,A_1\times A_1}\right)&=O_{m,d}\left(|A_1|^{\frac{8}{3}}\right).
 \end{align*}
 \end{thm}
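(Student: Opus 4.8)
The plan is to deduce the estimate on $I\left(A_1\times A_1,\Cc^{(1)}_{f,A_2\times A_2}\right)$ from Theorem \ref{R21}, applied not to the full incidence set but to a suitably pruned one; the estimate on $I\left(A_2\times A_2,\Cc^{(2)}_{f,A_1\times A_1}\right)$ follows by the symmetric argument (interchanging the roles of $A_1$ and $A_2$), so I only carry out the first. Write $A:=A_1\times A_1$ and $\Cc:=\Cc^{(1)}_{f,A_2\times A_2}$. The obstacles to a direct application of Theorem \ref{R21} are exactly two: first, curves in $\Cc$ need not be distinct nor free of common components; second, the bound on the number of curves through two fixed points of $A$ is only guaranteed after we restrict to $B_2$ via hypothesis (\ref{E10}). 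The first step is therefore to control the degrees: since $\deg f\le d$, for each $\ab_2=(a_2,a_2')$ the defining polynomial $p_{a_2}(x_1)q_{a_2'}(x_2)-p_{a_2'}(x_2)q_{a_2}(x_1)$ of $C^{(1)}_{f,\ab_2}$ has degree $O_d(1)$, so every curve in $\Cc$ has degree at most some $d'=d'(d)$.

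Second, I would group the curves of $\Cc$ by shared irreducible components. Let $\mathcal{E}$ be the (finite) set of irreducible curves $C$ in $\Com^2$ that appear as a component of at least two members of $\Cc$; equivalently of at least one $C^{(1)}_{f,\ab_2}$ with $\ab_2\in B_2$ occurring "too often", together with the leftover curves coming from $(A_2\times A_2)\setminus B_2$. The key combinatorial point is that each $C^{(1)}_{f,\ab_2}$ has at most $d'$ irreducible components, and by (\ref{E10}) each fixed irreducible $C$ lies in $<m$ of the curves indexed by $B_2$; the curves indexed by the exceptional set $(A_2\times A_2)\setminus B_2$ number at most $m|A_2|$. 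From this one extracts a subset $I$ of $I(A,\Cc)$ by \emph{removing}, for every $\ab_1\in A$ and every irreducible curve $C\in\mathcal{E}$ with $\ab_1\in C$, all incidences $(\ab_1,C^{(1)}_{f,\ab_2})$ for which $C\subseteq C^{(1)}_{f,\ab_2}$ and $C$ is "the reason" the point lies on the curve. More cleanly: retain $(\ab_1,C^{(1)}_{f,\ab_2})\in I$ only when $\ab_1$ lies on $C^{(1)}_{f,\ab_2}$ but on no common component of two curves of $\Cc$. The number of discarded incidences is $O_{m,d}(|A_1|^{2}\cdot|A_1|)=O_{m,d}(|A_1|^{3})=O_{m,d}(|A_1|^{8/3})$ — wait, that is too weak; instead I count discarded incidences as (number of exceptional curves) $\times$ (points on a curve, $O_d(|A_1|)$ by Bezout since an irreducible curve of bounded degree meets the grid $A_1\times A_1$ in $O_d(|A_1|)$ points unless it is a vertical/horizontal line, which must be handled separately) plus a similar term, giving $O_{m,d}(|A_1|\cdot|A_1|)=O_{m,d}(|A_1|^{2})$, comfortably within the target.

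Third, having passed to $I\subseteq I(A,\Cc)$ and to the sub-family $\Cc'\subseteq\Cc$ of curves with no pairwise common component (splitting a curve into a bounded number of irreducible pieces if necessary, which only inflates $|\Cc|$ by an $O_d(1)$ factor), I verify the hypothesis of Theorem \ref{R21}: for $\ab_1,\ab_1'\in A$, the number of $C\in\Cc'$ with both $(\ab_1,C),(\ab_1',C)\in I$ is, by construction, at most the number of curves of $\Cc$ passing through both points none of whose relevant components is shared — and using Remark \ref{R22} together with (\ref{E10}) this is $O_{m}(1)$, say $<m'$. Then Theorem \ref{R21} with $|A|=|A_1|^{2}$ and $|\Cc'|=O_d(|A_2|^{2})=O_d(|A_1|^{2})$ yields
\begin{equation*}
|I|=O_{d,m'}\left(|A|^{2/3}|\Cc'|^{2/3}+|A|+|\Cc'|\right)=O_{m,d}\left(|A_1|^{4/3}|A_1|^{4/3}+|A_1|^{2}\right)=O_{m,d}\left(|A_1|^{8/3}\right),
\end{equation*}
and adding back the $O_{m,d}(|A_1|^{2})$ discarded incidences gives $I\left(A_1\times A_1,\Cc^{(1)}_{f,A_2\times A_2}\right)=O_{m,d}(|A_1|^{8/3})$, as claimed.

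The step I expect to be the main obstacle is the bookkeeping in the second step: one must argue that after excising the incidences attributable to shared components, the \emph{remaining} incidences genuinely satisfy the "few curves through two points" hypothesis of Theorem \ref{R21}, and simultaneously that the number of excised incidences is only $O_{m,d}(|A_1|^{2})$. The delicate sub-point is the treatment of vertical and horizontal lines (and any other low-degree components that could pass through $\Omega(|A_1|)$ grid points and simultaneously be shared by many curves): these must be isolated first, their total incidence contribution bounded directly by $O_d(|A_1|^{2})$ using $|A_1|=|A_2|$ and the degree bound, and only then does the generic-position argument for Theorem \ref{R21} go through on what is left. The duality of Remark \ref{R22} is what lets the same scheme run verbatim for the second displayed bound.
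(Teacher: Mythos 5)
Your plan is right to try to massage the incidence set before invoking Theorem \ref{R21}, but the "prune incidences on shared components, then add them back" scheme has a genuine gap in the accounting step that you yourself flagged as the obstacle. The number of irreducible curves $C$ that occur as a shared component of at least two members of $\Cc^{(1)}_{f,A_2\times A_2}$ is not $O_{m,d}(|A_1|)$: hypothesis (\ref{E10}) caps how many $B_2$-indexed curves contain a \emph{fixed} irreducible $C$, but it does not cap how many \emph{distinct} shared components there are overall. Since each of the $|A_2|^2$ curves carries up to $2d$ components, the set $\mathcal{E}$ of shared irreducible components can have size $\Omega_d(|A_1|^2)$. Your count "$|\mathcal{E}|\times O_d(|A_1|)$" is therefore not $O_{m,d}(|A_1|^2)$; and even the correct count of discarded incidences, $\sum_{\ab_2}|D_{\ab_2}\cap(A_1\times A_1)|$ where $D_{\ab_2}$ is the union of shared components of $C^{(1)}_{f,\ab_2}$, can reach $\Omega_d(|A_1|^3)$ because $\Omega(|A_1|^2)$ indices $\ab_2$ may each contribute $\Omega_d(|A_1|)$ grid points. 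There is also an unaddressed second issue: the "few curves through two points" hypothesis for $\Cc^{(2)}_{f,\cdot}$ requires controlling shared components among the \emph{dual} curves $C^{(2)}_{f,\ab_1}$, i.e.\ one needs to do something to $B_1$ too, and appealing to Remark \ref{R22} and (\ref{E10}) alone does not produce an $O_m(1)$ bound for arbitrary pairs of grid points.

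The paper avoids both problems by partitioning rather than pruning. It forms a graph on $B_2$ (resp.\ $B_1$) whose edges record pairs of curves with a common irreducible component; by (\ref{E10}) and the fact that each curve has at most $2d$ components, every vertex has degree at most $2md$. Lemma \ref{R20} then splits $B_2$ (and $B_1$) into at most $2md+1$ parts $B_{2,j}$ (resp.\ $B_{1,i}$) within which no two curves share a component. On each pair of parts, the no-common-component condition for $\Cc^{(2)}_{f,B_{1,i}}$ holds by construction, and the "two points, few curves" condition follows from Bezout ($\le 4d^2$) via Remark \ref{R22} applied to $B_{2,j}$. Theorem \ref{R21} is then applied $O_{m,d}(1)$ times and summed, and only the $O_{m,d}(|A_1|^2)$ incidences coming from $(A_1\times A_1)\setminus B_1$ and $(A_2\times A_2)\setminus B_2$ need to be added back — those are genuinely small because the exceptional index sets have size $\le m|A_1|$, which is the bound you should have been reaching for. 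The graph-coloring lemma is the piece your argument is missing; without it, the discard-and-recount strategy does not close.
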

 \begin{proof}
 Define the graph $G=(V,E)$ with $V:=B_2$ and $E$ the set of pairs $\{\ab,\ab'\}$ with $\ab,\ab'\in B_2$ such that $C^{(1)}_{f,\ab}$ and $C^{(1)}_{f,\ab'}$ have an irreducible component in common. For all $\ab\in\Com^2$, since $\deg f\leq d$, the algebraic curve $C^{(1)}_{f,\ab}$ has degree at most $2d$; in particular $C^{(1)}_{f,\ab}$ has at most $2d$ irreducible components. Now note that each irreducible curve in $\Com^2$ can be shared by less than $m$ curves in $\Cc^{(1)}_{f,B_2}$  by (\ref{E10}). Thereby the degree of each vertex of $G$ can be at most $2md$; set $d_2:=\max_{v\in V}d_G(v)\leq 2md$. From Lemma \ref{R20}, there is a partition $B_{2}=\biguplus_{i=1}^{d_2+1} B_{2,i}$ such that the subsets of the partition are not empty and $\{\ab,\ab'\}\not\in E$ for all $\ab,\ab'\in B_{2,i}$ with $\ab\neq \ab'$ and $i\in\{1,2,\ldots,d_2+1\}$. This means that for all  $i\in\{1,2,\ldots,d_2+1\}$  and $\ab,\ab'\in B_{2,i}$, the curves   $C^{(1)}_{f,\ab}$ and $C^{(1)}_{f,\ab'}$ do not  have an irreducible component in common; in particular Theorem \ref{R13} yields that
 \begin{equation}
 \label{E11}
 \left|C^{(1)}_{f,\ab}\cap C^{(1)}_{f,\ab'}\right|\leq \deg C^{(1)}_{f,\ab}\cdot \deg C^{(1)}_{f,\ab'}\leq 4d^2.
 \end{equation}
 Proceeding in the same way, there is $d_1\leq 2md$ and $B_{1}=\biguplus_{i=1}^{d_1+1} B_{1,i}$ such that the subsets of the partition are not empty and  for all  $i\in\{1,2,\ldots,d_1+1\}$  and $\ab,\ab'\in B_{1,i}$ with $\ab\neq \ab'$, the curves   $C^{(2)}_{f,\ab}$ and $C^{(2)}_{f,\ab'}$ do not share an irreducible component. Let $i\in\{1,2,\ldots, d_1\}$ and $j\in\{1,2,\ldots, d_2\}$. For any two different curves in $\Cc^{(2)}_{f,B_{1,i}}$, they  do not share an irreducible component. For any $\ab,\ab'\in B_{2,j}$, Remark \ref{R22} and (\ref{E11}) imply that there are at most $4d^2$ elements $C$ of $\Cc^{(2)}_{f,A_1\times A_1}$ such that $\ab,\ab'\in C$. The last two claims make possible to apply Theorem \ref{R21} to the set $A_2\times A_2\subseteq \Com^2$, the family of curves $\Cc^{(2)}_{f,A_1\times A_1}$  and the incidence subset
 \begin{equation*}
 I\left(B_{2,j},\Cc^{(2)}_{f,B_{1,i}}\right)\subseteq I\left(A_2\times A_2,\Cc^{(1)}_{f,A_1\times A_1}\right).
 \end{equation*}
 Therefore Theorem \ref{R21} leads to
 \begin{align}
 \label{E12}
I\left(B_{2,j},\Cc^{(2)}_{f,B_{1,i}}\right)&=O_{d,m}\left(|A_2\times A_2|^{\frac{2}{3}}\left|\Cc^{(2)}_{f,A_1\times A_1}\right|^{\frac{2}{3}}+|A_2\times A_2|+\left|\Cc^{(2)}_{f,A_1\times A_1}\right|\right)\nonumber\\
&=O_{d,m}\left(|A_1|^{\frac{8}{3}}\right).
 \end{align}
 Inasmuch as $d_1,d_2\leq 2dm$, (\ref{E12}) yields that
 \begin{equation}
 \label{E13}
 I\left(B_{2},\Cc^{(2)}_{f,B_{1}}\right)=\sum_{j=1}^{d_2}\sum_{i=1}^{d_1}I\left(B_{2,j},\Cc^{(2)}_{f,B_{1,i}}\right)=O_{d,m}\left(|A_1|^{\frac{8}{3}}\right).
 \end{equation}
 Now note that for any $\ab\in A_2\times A_2$, the curve $C^{(1)}_{f,\ab}$ has degree at most $2d$; thus for any $a_1\in A_1$, there are at most $2d$ elements $a_1'\in A_1$ such that $(a_1,a'_1)\in C^{(1)}_{f,\ab}$; thus, from Remark \ref{R22}, this means that given $a_1\in A_1$, there are at most $2d$ elements $a_1'\in A_1$ such that $\ab\in C^{(2)}_{f,(a_1,a'_1)}$. This leads to the inequality
 \begin{equation*}
  \left|\left\{(a_1,a'_1)\in A_1\times A_1:\;\ab\in C^{(2)}_{f,(a_1,a'_1)}\right\}\right|\leq 2d|A_1|,
 \end{equation*}
 and then 
 \begin{equation}
 \label{E14}
 I\left(\{\ab\},\Cc^{(2)}_{f,A_1\times A_1}\right)\leq 2d|A_1|.
 \end{equation}
 Thus (\ref{E14}) yields that
 \begin{align*}
 I\left((A_2\times A_2)\setminus B_2,\Cc^{(2)}_{f,A_1\times A_1}\right)&=\sum_{\ab\in(A_2\times A_2)\setminus B_2} I\left(\{\ab\},\Cc^{(2)}_{f,A_1\times A_1}\right)\nonumber\\
 &\leq 2d|(A_2\times A_2)\setminus B_2||A_1|
 \end{align*}
  and the inequality $|(A_2\times A_2)\setminus B_2|\leq m|A_2|$ implies that
  \begin{equation}
   \label{E15}
    I\left((A_2\times A_2)\setminus B_2,\Cc^{(2)}_{f,A_1\times A_1}\right)\leq 2md|A_1||A_2|=2md|A_1|^2.
  \end{equation}
  In the same way it is proven that
  \begin{equation}
  \label{E16}
  I\left((A_1\times A_1)\setminus B_1,\Cc^{(1)}_{f,A_2\times A_2}\right)\leq 2md|A_1|^2.
  \end{equation}
 Remark \ref{R22} and (\ref{E16}) lead to
  \begin{equation*}
    I\left(A_2\times A_2, \Cc^{(2)}_{f,(A_1\times A_1)\setminus B_1}\right)\leq 2md|A_1|^2,
  \end{equation*}
  and hence, since $B_2\subseteq A_2\times A_2$, we conclude from the previous inequality that 
   \begin{equation}
   \label{E17}
    I\left(B_2, \Cc^{(2)}_{f,(A_1\times A_1)\setminus B_1}\right)\leq 2md|A_1|^2.
  \end{equation}
  From (\ref{E13}), (\ref{E15}) and (\ref{E17}), 
  \begin{align*}
  I\left(A_2\times A_2,\Cc^{(2)}_{f,A_1\times A_1}\right)=&I\left(B_{2},\Cc^{(2)}_{f,B_{1}}\right)+ I\left((A_2\times A_2)\setminus B_2,\Cc^{(2)}_{f,A_1\times A_1}\right)\\
  &+I\left(B_2, \Cc^{(2)}_{f,(A_1\times A_1)\setminus B_1}\right)\\
  =&O_{m,d}\left(|A_1|^{\frac{8}{3}}\right).
  \end{align*}
  Proceeding symmetrically it is proven that
  \begin{equation*}
  I\left(A_1\times A_1,\Cc^{(1)}_{f,A_2\times A_2}\right)
  =O_{m,d}\left(|A_1|^{\frac{8}{3}}\right).\qedhere
\end{equation*}   
\end{proof}
\section{Lifting rational functions}
Let $\field\in\{\Real,\Com\}$. We will denote by $\Pc_\field$ the family of rings of polynomials with coefficients in $\field$ (we consider $\field\in\Pc_\field$); thus  if $R=\field[x_1,x_2,\ldots,x_n]$ and $x$ is an algebraically independent variable over $R$, then $R[x]\in \Pc_\field$. We start this section with two lemmas that characterize the functions that can be be factorized through a given function. 

For $n_1,n_2,m_1,m_2,j\in\Nat$, write
\begin{align*}
[n_1]&:=\{0,1,\ldots, n_1\}\\
[n_1]_j^{m_1}&:=\left\{\jb=(j_1,j_2,\ldots, j_{m_1})\in [n_1]^{m_1}:\;\sum_{k=1}^{m_1}j_k=j\right\}\\
([n_1]^{m_1}\times [n_2]^{m_2})_j&:=\left\{\jb=(j_1,\ldots, j_{m_1+m_2})\in [n_1]^{m_1}\times [n_2]^{m_2}:\;\sum_{k=1}^{m_1+m_2}j_k=j\right\}.
\end{align*}
To abbreviate the notation, for each $\jb\in [n_1]_j^{m_1}$ (resp. $\jb\in ([n_1]^{m_1}\times [n_2]^{m_2})_j$), its entries will be  $\jb=(j_1,j_2,\ldots, j_{m_1})$ (resp. $\jb=(j_1,j_2,\ldots, j_{m_1+m_2})$). Let $g\in\field(x)$ and $p,q\in\field[x]$ be such that $(p,q)=1$ and $g=\frac{p}{q}$. Write $p(x)=\sum_{i=0}^{m_p}b_{p,i}x^i$ and  $q(x)=\sum_{i=0}^{m_q}b_{q,i}x^i$ where $m_p:=\deg p$ and $m_q:=\deg q$; set $m:=\max\{m_p,m_q\}$. For any $n\in\Nat$, define the families of polynomials $\Fc^{(1)}_{g,n}:=\{p_{1,i}\}_{i=0}^{nm}\cup \{q_{1,i}\}_{i=0}^{nm}$ and $\Fc^{(2)}_{g,n}:=\{p_{2,i}\}_{i=0}^{nm}\cup \{q_{2,i}\}_{i=0}^{nm}$ in $\field[z_0,z_1,\ldots,z_n,w_0,w_1,\ldots,w_n]$ for all $j\in \{0,1,\ldots, nm\}$ as follows:
\begin{align*}
p_{1,j}&:=\sum_{i=0}^{m_p}b_{p,i}\sum_{\jb\in[n]^{m}_j}z_{j_1}z_{j_2}\ldots z_{j_i}w_{j_{i+1}}\ldots w_{j_m}\\
q_{1,j}&:=\sum_{i=0}^{m_q}b_{q,i}\sum_{\jb\in[n]^{m}_j}z_{j_1}z_{j_2}\ldots z_{j_i}w_{j_{i+1}}\ldots w_{j_m}\\
p_{2,j}&:=\sum_{i=0}^n\left(\sum_{\jb\in([m_p]^i\times [m_q]^{n-i})_j}b_{p,j_1}b_{p,j_2}\ldots b_{p,j_i}b_{q,j_{i+1}}\ldots b_{q,j_n}\right)z_i\\
q_{2,j}&:=\sum_{i=0}^n\left(\sum_{\jb\in([m_p]^i\times [m_q]^{n-i})_j}b_{p,j_1}b_{p,j_2}\ldots b_{p,j_i}b_{q,j_{i+1}}\ldots b_{q,j_n}\right)w_i.
\end{align*}
For $R\in\Pc_\field$, set
\begin{align*}
V_{R,n}&:=\left\{(a_{p,0},\ldots,a_{p,n},a_{q,0},\ldots, a_{q,n})\in R^{2n+2}:\;(a_{q,0},\ldots,a_{q,n})\neq (0,\ldots, 0)\right\}.
\end{align*}
Note that the definition of  the families $\Fc^{(i)}_{g,n}$ for $i\in\{1,2\}$ depends on the pair of  polynomials $\{p,q\}$ chosen to represent the quotient $g=\frac{p}{q}$; however, for any two  pair of polynomials $\{p_1,q_1\}$ and $\{p_2,q_2\}$ as above, there is $a\in\field$ such that $p_1=a\cdot p_2$ and $p_2=a\cdot q_2$; therefore the families $\Fc^{(i)}_{g,n}$ for $i\in\{1,2\}$ defined by both pairs are identical up to multiplication by scalars. We also have the following fact about the defined families.
\begin{rem}
\label{R24}
For any $g\in \field(x)$ with $\deg g=m$ and $n\in\Nat$, $\Fc^{(1)}_{g,n}$ is a family of homogeneous polynomials of degree $m$, while $\Fc^{(2)}_{g,n}$ is a family of homogeneous polynomials of degree $1$.
\end{rem}
 \begin{lem}
 \label{R25}
 Let $\field\in\{\Real,\Com\}$, $R\in\Pc_\field$, $x$ be an algebraically independent variable over $R$, $g\in \field(x)$ with $\deg g=m$, $f\in R(x)$ and $n\in\Nat$. 
 \begin{enumerate}
 \item[i)]There is $h\in R(x)$ with $\deg_x h\leq n$ such that $f=g\circ h$ if and only if there is $\ab\in V_{R,n}$ such that 
 \begin{equation}
 \label{E18}
 f(x)=\frac{\sum_{i=0}^{nm}p_{1,i}(\ab)x^i}{\sum_{i=0}^{nm}q_{1,i}(\ab)x^i}
 \end{equation}
 where  $\{p_{1,i}\}_{i=0}^{nm}\cup \{q_{1,i}\}_{i=0}^{nm}=\Fc^{(1)}_{g,n}$. 
 \item[ii)]There is $h\in R(x)$ with $\deg_x h\leq n$ such that $f=h\circ g$ if and only if there is $\ab\in V_{R,n}$ such that 
 \begin{equation*}
 f(x)=\frac{\sum_{i=0}^{nm}p_{2,i}(\ab)x^i}{\sum_{i=0}^{nm}q_{2,i}(\ab)x^i}
 \end{equation*}
 where  $\{p_{2,i}\}_{i=0}^{nm}\cup \{q_{2,i}\}_{i=0}^{nm}=\Fc^{(2)}_{g,n}$. 
 \end{enumerate}
 \end{lem}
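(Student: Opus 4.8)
The plan is to prove both statements by the same mechanism: expand the composition $g\circ h$ (resp. $h\circ g$) explicitly in terms of the coefficients of $h$ and the fixed coefficients $b_{p,i}, b_{q,i}$ of $g=p/q$, and recognize the resulting numerator and denominator as exactly the polynomials $p_{1,i},q_{1,i}$ (resp. $p_{2,i},q_{2,i}$) evaluated at the coefficient vector of $h$. The two directions of each ``if and only if'' then become essentially bookkeeping: one direction is the expansion, the other is reading the expansion backwards.

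For part i), I would write $h=\frac{a(x)}{b(x)}$ with $a(x)=\sum_{i=0}^n a_{p,i}x^i$, $b(x)=\sum_{i=0}^n a_{q,i}x^i$, where $\deg_x h\le n$ lets us pad with zero coefficients so that both $a$ and $b$ have formal degree $n$; the constraint $(a_{q,0},\dots,a_{q,n})\neq(0,\dots,0)$ is exactly $b\neq 0$, i.e. membership in $V_{R,n}$. Then
\begin{equation*}
g(h(x))=\frac{p(a(x)/b(x))}{q(a(x)/b(x))}=\frac{\sum_{i=0}^{m_p}b_{p,i}\,a(x)^i b(x)^{m-i}}{\sum_{i=0}^{m_q}b_{q,i}\,a(x)^i b(x)^{m-i}},
\end{equation*}
after multiplying numerator and denominator by $b(x)^m$ (using $m=\max\{m_p,m_q\}$ so the powers of $b$ are nonnegative, and noting $b(x)^m\neq 0$ so this does not change the rational function). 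Now $a(x)^i b(x)^{m-i}$ is the product of $i$ copies of $a$ and $m-i$ copies of $b$, and the coefficient of $x^j$ in that product is precisely $\sum_{\jb\in[n]^m_j} a_{p,j_1}\cdots a_{p,j_i}a_{q,j_{i+1}}\cdots a_{q,j_m}$; summing against $b_{p,i}$ (resp. $b_{q,i}$) and collecting the coefficient of $x^j$ gives exactly $p_{1,j}(\ab)$ (resp. $q_{1,j}(\ab)$) with $\ab=(a_{p,0},\dots,a_{p,n},a_{q,0},\dots,a_{q,n})$. Since each factor has formal degree $n$ and there are $m$ of them, all coefficients with index above $nm$ vanish, matching the range of the sum in \eqref{E18}. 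This proves the ``only if'' direction; conversely, given $\ab\in V_{R,n}$ for which \eqref{E18} holds, set $h:=\frac{\sum a_{p,i}x^i}{\sum a_{q,i}x^i}\in R(x)$ (well-defined and with $\deg_x h\le n$ precisely because $\ab\in V_{R,n}$), and the same computation shows $f=g\circ h$.

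For part ii) the argument is dual: writing $h(x)=\frac{\sum_{i=0}^n a_{p,i}x^i}{\sum_{i=0}^n a_{q,i}x^i}$, one has $h(g(x))=\frac{\sum_{i=0}^n a_{p,i}g(x)^i}{\sum_{i=0}^n a_{q,i}g(x)^i}$, and multiplying through by $q(x)^n$ gives numerator $\sum_{i=0}^n a_{p,i}\,p(x)^i q(x)^{n-i}$ and denominator $\sum_{i=0}^n a_{q,i}\,p(x)^i q(x)^{n-i}$; here $p(x)^i q(x)^{n-i}$ has coefficient of $x^j$ equal to $\sum_{\jb\in([m_p]^i\times[m_q]^{n-i})_j} b_{p,j_1}\cdots b_{p,j_i}b_{q,j_{i+1}}\cdots b_{q,j_n}$, so collecting the coefficient of $x^j$ in the whole numerator (resp. denominator) yields exactly $p_{2,j}(\ab)$ (resp. $q_{2,j}(\ab)$), which is linear in $\ab$ as Remark \ref{R24} records. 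The degree bound $nm$ holds since $p^iq^{n-i}$ has degree at most $nm$. Both directions follow as before.

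The one genuine subtlety — the step I expect to require the most care — is the cancellation issue: multiplying numerator and denominator of $g(h(x))$ by $b(x)^m$ (resp. by $q(x)^n$) is legitimate only because $b(x)\neq 0$ (resp. $q(x)\neq 0$), but the resulting pair of polynomials $(\sum p_{1,i}(\ab)x^i,\ \sum q_{1,i}(\ab)x^i)$ need not be coprime, so \eqref{E18} is an identity of rational functions rather than a coefficientwise identity of polynomials. This is exactly why the statement is phrased with the quotient on the right-hand side and not with matched numerator/denominator; I would make sure the ``converse'' direction only extracts $h$ from $\ab$ and re-runs the forward computation, rather than trying to match coefficients directly. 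One should also check the edge case where $\ab$ makes the denominator $\sum q_{1,i}(\ab)x^i$ the zero polynomial — but this cannot happen when $\ab\in V_{R,n}$, since then $b(x)\neq 0$ and $q(b(x)/ \cdots)$, i.e. $\sum_{i=0}^{m_q}b_{q,i}a(x)^ib(x)^{m-i}$, is $b(x)^m\, q(h(x))$, which is nonzero because $q\neq 0$ and $R(x)$ is a domain; so \eqref{E18} is meaningful precisely on $V_{R,n}$, consistent with the statement.
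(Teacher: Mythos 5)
Your proof is correct and follows essentially the same route as the paper's: expand $g\circ h$ (resp.\ $h\circ g$), clear denominators by the appropriate power of $b(x)$ (resp.\ $q(x)$), and recognize the resulting coefficient polynomials as $\Fc^{(1)}_{g,n}$ (resp.\ $\Fc^{(2)}_{g,n}$) evaluated at the coefficient vector $\ab$, with the converse obtained by defining $h:=p_\ab/q_\ab$ and running the same computation backwards. Your extra remarks on the cancellation issue and the nonvanishing of the denominator make explicit points the paper treats only implicitly (cf.\ Remark~\ref{R26}), and are a welcome clarification rather than a deviation.
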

 \begin{proof}
 Write $g=\frac{p}{q}$ with $p,q\in\field[x], (p,q)=1$, and also $m_p:=\deg p$ and $m_q:=\deg q$; assume without loss of generality that $m=m_p$ (the case $m=m_q$ is done symmetrically). Set $p(x)=\sum_{i=0}^{m_p}b_{p,i}x^i$ and $q(x)=\sum_{i=0}^{m_q}b_{q,i}x^i$. For any $\ab=(a_{p,0},a_{p,1},\ldots,a_{p,n},a_{q,0},\ldots, a_{q,n})\in V_{R,n}$, define $p_\ab(x):=\sum_{i=0}^na_{p,i}x^i$ and $q_\ab(x):=\sum_{i=0}^na_{q,i}x^i$. Then  the definition of $\Fc^{(1)}_{g,n}$ leads to
 \begin{align}
 \label{E19}
 \sum_{i=0}^{m_p}b_{p,i}p_\ab(x)^iq_\ab(x)^{m_p-i}&=\sum_{j=0}^{nm}p_{1,j}(\ab)x^j\nonumber\\
 \left(\sum_{i=0}^{m_q}b_{q,i}p_\ab(x)^iq_\ab(x)^{m_q-i}\right)q_\ab(x)^{m_p-m_q}&=\sum_{j=0}^{nm}q_{1,j}(\ab)x^j,
 \end{align}
 and the definition of $\Fc^{(2)}_{g,n}$ yields that
 \begin{align}
 \label{E20}
 \sum_{i=0}^{n}a_{p,i}p(x)^iq(x)^{n-i}&=\sum_{j=0}^{nm}p_{2,j}(\ab)x^j\nonumber\\
 \sum_{i=0}^{n}a_{q,i}p(x)^iq(x)^{n-i}&=\sum_{j=0}^{nm}q_{2,j}(\ab)x^j. \end{align}
 To show i), first we assume  that there is $h(x)=\frac{\sum_{i=0}^na_{p,i}x^i}{\sum_{i=0}^na_{q,i}x^i}\in R(x)$ such that $\deg h\leq n$ and $f=g\circ h$. Set $\ab=(a_{p,0},a_{p,1},\ldots,a_{p,n},a_{q,0},\ldots, a_{q,n})$ so that $h=\frac{p_\ab}{q_\ab}$. Thus
 \begin{align}
 \label{E21}
 f(x)=g(h(x))=\frac{\sum_{i=0}^{m_p}b_{p,i}\left(\frac{p_\ab}{q_\ab}\right)^i}{\sum_{i=0}^{m_q}b_{q,i}\left(\frac{p_\ab}{q_\ab}\right)^i}=\frac{\sum_{i=0}^{m_p}b_{p,i}p_\ab^iq_\ab^{m_p-i}}{\left(\sum_{i=0}^{m_q}b_{q,i}p_\ab^iq_\ab^{m_q-i}\right)q_\ab^{m_p-m_q}}.
 \end{align}
 Then the equalities in (\ref{E19}) and (\ref{E21}) lead to (\ref{E18}). Now assume (\ref{E18}). Define  $h:=\frac{p_\ab}{q_\ab}$. On the one hand, $\deg_x h\leq \max\{\deg_x p_\ab,\deg_x q_\ab\}\leq n$. On the other hand,
 \begin{align*}
 f(x)&=\frac{\sum_{i=0}^{nm}p_{1,i}(\ab)x^i}{\sum_{i=0}^{nm}q_{1,i}(\ab)x^i}&\Big(\text{by (\ref{E18})}\Big)\\
 &=\frac{\sum_{i=0}^{m_p}b_{p,i}p_\ab^iq_\ab^{m_p-i}}{\left(\sum_{i=0}^{m_q}b_{q,i}p_\ab^iq_\ab^{m_q-i}\right)q_\ab^{m_p-m_q}}&\Big(\text{by (\ref{E19})}\Big)\\
 &=\frac{\sum_{i=0}^{m_p}b_{p,i}\left(\frac{p_\ab}{q_\ab}\right)^i}{\sum_{i=0}^{m_q}b_{q,i}\left(\frac{p_\ab}{q_\ab}\right)^i}\\
 &=g(h(x)),
 \end{align*}
  and this completes the proof of i). The proof of ii) is done analogously using (\ref{E20}) instead of (\ref{E19}).
 \end{proof}
 \begin{rem}
 \label{R26}A consequence of Lemma \ref{R25} is that for all $\ab\in V_{R,n}$ and $i\in\{1,2\}$, 
 \begin{equation*}
 (q_{i,0}(\ab),q_{i,1}(\ab),\ldots,q_{i,nm}(\ab))\neq (0,0,\ldots,0).
\end{equation*}  
 \end{rem}
 For $\field\in\{\Real,\Com\}$, $R\in\Pc_\field$, $m,n\in\Nat$, $i\in\{1,2\}$ and $g\in\field(x)$ with $\deg g=m$, set
 \begin{equation*}
 U_{R,n}:=\left\{[a_{p,0}:\ldots:a_{p,n}:a_{q,0}:\ldots:a_{q,n}]\in \Pro_R^{2n+1}:\;(a_{q,0},\ldots,a_{q,n})\neq (0,\ldots, 0)\right\}.
 \end{equation*}
  and the morphisms
  \begin{align*}
  \phi^{(i)}_{g,R,n}:U_{R,n}\longrightarrow \Pro_R^{2nm+1},&\\
  \phi^{(i)}_{g,R,n}(\ab)=[p_{i,0}(\ab):p_{i,1}(\ab):\ldots:p_{i,nm}(\ab):q_{i,0}(\ab):q_{i,1}(\ab):\ldots:q_{i,nm}(\ab)]&
  \end{align*}
  where  $\{p_{i,j}\}_{j=0}^{nm}\cup \{q_{i,j}\}_{j=0}^{nm}=\Fc^{(i)}_{g,n}$. 
  \begin{thm}
  \label{R27}
   Let $\field\in\{\Real,\Com\}$,  $n,m\in\Pos$ and $g\in\field(x)$ be such that $\deg g=m$.
   \begin{enumerate}
   \item[i)]There is a family of polynomials  $\Gc^{(1)}_{g,n}=\{r_{1,j}\}_{j=1}^{n_1}\cup \{s_{1,j}\}_{j=1}^{m_1}$ in \\$\Com[z_0,z_1,\ldots, z_{nm},w_0,w_1,\ldots,w_{nm}]$. With the following two properties.
   \begin{enumerate}
   \item[1)]\begin{equation*}
   \max_{1\leq i\leq n_1}\deg r_{1,i}\leq 2\left(\frac{n^2m^2}{2}+nm\right)^{2^{4nm+3}}.
   \end{equation*}
   \item[2)]For all $R\in \Pc_\field$, $x$ algebraically independent variable over $R$ and $f\in R(x)$ with $\deg_x f= nm$, there is $h\in R(x)$ such that $f=g\circ h$ if and  only if there is $[a_{p,0}:\ldots:a_{p,nm}:a_{q,0}:\ldots:a_{q,nm}]\in \Zc_R\left(\left\{r_{1,j}\right\}_{j=1}^{n_1}\right)\cap \left(\Pro^{2nm+1}_R\setminus \Zc_R\left(\left\{s_{1,j}\right\}_{j=1}^{n_1}\right)\right)$ such that 
   \begin{equation*}
   f(x)=\frac{\sum_{i=0}^{nm}a_{p,i}x^i}{\sum_{i=0}^{nm}a_{q,i}x^i}.
   \end{equation*}
   \end{enumerate}
    \item[ii)]There is a family of polynomials  $\Gc^{(2)}_{g,n}=\{r_{2,j}\}_{j=1}^{n_2}\cup \{s_{2,j}\}_{j=1}^{m_2}$ in \\$\Com[z_0,z_1,\ldots, z_{nm},w_0,w_1,\ldots,w_{nm}]$. With the following two properties.
   \begin{enumerate}
   \item[1)]\begin{equation*}
   \max_{1\leq i\leq n_1}\deg r_{2,i}\leq 2\left(\frac{3}{2}\right)^{2^{4nm+3}}.
   \end{equation*}
   \item[2)]For all $R\in \Pc_\field$, $x$ algebraically independent variable over $R$  and $f\in R(x)$ with $\deg_x f= nm$, there is $h\in R(x)$ such that $f=h\circ g$ if and  only if there is $[a_{p,0}:\ldots:a_{p,nm}:a_{q,0}:\ldots:a_{q,nm}]\in \Zc_R\left(\left\{r_{2,j}\right\}_{j=1}^{n_2}\right)\cap \left(\Pro^{2nm+1}_R\setminus \Zc_R\left(\left\{s_{2,j}\right\}_{j=1}^{n_2}\right)\right)$ such that 
   \begin{equation*}
   f(x)=\frac{\sum_{i=0}^{nm}a_{p,i}x^i}{\sum_{i=0}^{nm}a_{q,i}x^i}.
   \end{equation*}
\end{enumerate}      
\end{enumerate} 
  \end{thm}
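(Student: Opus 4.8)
The plan is to produce both families from Corollary \ref{R12} applied over $\Com$, with Lemma \ref{R25} serving as the dictionary between the relations ``$f=g\circ h$'' (resp.\ ``$f=h\circ g$'') and membership of a coefficient vector of $f$ in the image of $\phi^{(1)}_{g,\cdot,n}$ (resp.\ $\phi^{(2)}_{g,\cdot,n}$). I describe part i); part ii) is identical after replacing $\Fc^{(1)}_{g,n},\phi^{(1)}_{g,\cdot,n}$ by $\Fc^{(2)}_{g,n},\phi^{(2)}_{g,\cdot,n}$ and using that, by Remark \ref{R24}, the members of $\Fc^{(2)}_{g,n}$ are homogeneous of degree $1$ instead of $m$, which is exactly what turns the degree bound into $2\left(\frac{3}{2}\right)^{2^{4nm+3}}$.

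First I would apply Corollary \ref{R12} to the morphism $\phi^{(1)}_{g,\Com,n}\colon U_{\Com,n}\to\Pro^{2nm+1}_\Com$. Its hypotheses hold: $U_{\Com,n}$ is a nonempty open subset of $\Pro^{2n+1}_\Com$ (its complement is the proper linear subspace $w_0=\dots=w_n=0$) and therefore irreducible; by Remark \ref{R24} the coordinate polynomials of $\phi^{(1)}_{g,\Com,n}$ are homogeneous of the common degree $m$; and by Remark \ref{R26} (with $R=\Com$) no point of $U_{\Com,n}$ lies in $\Zc_\Com$ of the members of $\Fc^{(1)}_{g,n}$. Corollary \ref{R12} then yields $\Gc^{(1)}_{g,n}=\{r_{1,j}\}_{j=1}^{n_1}\cup\{s_{1,j}\}_{j=1}^{m_1}$ with
\begin{equation*}
\phi^{(1)}_{g,\Com,n}\!\left(U_{\Com,n}\right)=\Zc_\Com\!\left(r_{1,1},\dots,r_{1,n_1}\right)\cap\left(\Pro^{2nm+1}_\Com\setminus\Zc_\Com\!\left(s_{1,1},\dots,s_{1,m_1}\right)\right)
\end{equation*}
and $\max_i\deg r_{1,i}\le 2\left(\frac{m^2}{2}+m\right)^{2^{(2nm+1)+(2n+1)+1}}$. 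Since $m,n\ge 1$ we have $m\le nm$ and $(2nm+1)+(2n+1)+1=2nm+2n+3\le 4nm+3$, so this is at most $2\left(\frac{n^2m^2}{2}+nm\right)^{2^{4nm+3}}$, giving property 1). I would also record that, because $g\in\field(x)$, the coordinate polynomials of $\phi^{(1)}_{g,\Com,n}$ have coefficients in $\field$, so the reduced Gr\"obner basis used in Corollary \ref{R12} has coefficients in $\field$ and the open set occurring there can be chosen defined over $\field$; hence $\Gc^{(1)}_{g,n}$ may be taken inside $\field[z_0,\dots,w_{nm}]$, which is what makes $\Zc_R(\{r_{1,j}\})$ meaningful for every $R\in\Pc_\field$ also when $\field=\Real$.

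For property 2), fix $R\in\Pc_\field$, $x$ algebraically independent over $R$, and $f\in R(x)$ with $\deg_x f=nm$. Working inside $\mathrm{Frac}(R)(x)$ and using that the degree of a rational function is multiplicative under composition together with $\deg g=m$, any $h$ with $f=g\circ h$ has $\deg_x h=n$, so Lemma \ref{R25} i) applies. For the implication ``$f=g\circ h\ \Rightarrow\ $ some coefficient vector of $f$ lies in the $\Gc^{(1)}_{g,n}$-locus'': Lemma \ref{R25} i) provides $\ab\in V_{R,n}$ with $f(x)=\left(\sum_i p_{1,i}(\ab)x^i\right)\big/\left(\sum_i q_{1,i}(\ab)x^i\right)$, and by Remark \ref{R26} the vector $\left(p_{1,0}(\ab),\dots,q_{1,nm}(\ab)\right)$ is nonzero, so it defines $P:=\phi^{(1)}_{g,R,n}([\ab])\in\Pro^{2nm+1}_R$. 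For each $j$, the composite $r_{1,j}\big(p_{1,0}(z,w),\dots,q_{1,nm}(z,w)\big)$ is a single element of $\field[z_0,\dots,w_n]$ which vanishes on the dense subset $U_{\Com,n}$ of $\Pro^{2n+1}_\Com$, hence vanishes identically; specializing $(z,w)$ to the entries of $\ab$ gives $r_{1,j}(P)=0$, so $P\in\Zc_R(\{r_{1,j}\})$. Likewise $U_{\Com,n}$ avoids $\bigcap_j\Zc_\Com\big(s_{1,j}(p_{1,0}(z,w),\dots,q_{1,nm}(z,w))\big)$, so the projective Nullstellensatz places, for every $k$, a power of $w_k$ in the ideal generated over $\field[z_0,\dots,w_n]$ by the $s_{1,j}(p_{1,0}(z,w),\dots,q_{1,nm}(z,w))$; specializing to $\ab$ and using that $R$ is a domain while $(a_{q,0},\dots,a_{q,n})\ne 0$ forces $s_{1,j_0}(P)\ne 0$ for some $j_0$, so $P\notin\Zc_R(\{s_{1,j}\})$. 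Since $P$ is a coefficient vector of $f$, this is the desired conclusion.

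The converse, ``some coefficient vector of $f$ in $\Zc_R(\{r_{1,j}\})\cap(\Pro^{2nm+1}_R\setminus\Zc_R(\{s_{1,j}\}))\ \Rightarrow\ f=g\circ h$ for some $h\in R(x)$'', is where I expect the real difficulty. Set $K=\mathrm{Frac}(R)$ and let $\overline K$ be an algebraic closure. Base changing, that coefficient vector lies in $\Zc_{\overline K}(\{r_{1,j}\})\cap(\Pro^{2nm+1}_{\overline K}\setminus\Zc_{\overline K}(\{s_{1,j}\}))$, and since the construction in Corollary \ref{R12} is compatible with the base change $\field\hookrightarrow\overline K$, re-running it over $\overline K$ identifies this set with $\phi^{(1)}_{g,\overline K,n}\!\left(U_{\overline K,n}\right)$; by Lemma \ref{R25} i) over $\overline K$ we then get $h_0\in\overline K(x)$ with $\deg_x h_0\le n$ and $f=g\circ h_0$, and the point is to descend $h_0$ to $R(x)$. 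For part ii) this descent is immediate: right composition by the non-constant $g$ is injective, so $f=h_0\circ g$ over $K$ forces $h_0^\sigma\circ g=h_0\circ g$, hence $h_0^\sigma=h_0$, for every $\sigma\in\mathrm{Aut}(\overline K/K)$, so $h_0\in K(x)$ and, after clearing denominators, $h_0\in R(x)$. For part i), left composition by $g$ need not be injective, so $h_0$ need not be Galois invariant; here I would analyse the finite $\mathrm{Aut}(\overline K/K)$-stable fibre of $\phi^{(1)}_{g,\overline K,n}$ over the (already $K$-rational) coefficient vector of $f$ and argue that it must contain a $K$-rational point, which after clearing denominators yields the required $h\in R(x)$. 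Controlling that fibre — and keeping track throughout of the coefficient ring one is working over — is the delicate step; everything else is bookkeeping on top of Corollary \ref{R12}, Lemma \ref{R25}, and the Nullstellensatz.
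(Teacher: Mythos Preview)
Your setup and the derivation of the degree bound match the paper's exactly: apply Corollary~\ref{R12} to $\phi^{(1)}_{g,\Com,n}$ on the irreducible open set $U_{\Com,n}$, and use Remark~\ref{R24} and Remark~\ref{R26} to verify the hypotheses. Where you diverge is in establishing property 2). The paper does not treat the two implications separately and never passes to $\overline K$; instead it proves, for every $R\in\Pc_\field$, the single identity
\[
\phi^{(1)}_{g,R,n}(U_{R,n})\;=\;\Zc_R\bigl(\{r_{1,j}\}\bigr)\cap\Bigl(\Pro^{2nm+1}_R\setminus\Zc_R\bigl(\{s_{1,j}\}\bigr)\Bigr),
\]
arguing that the equality over $\Com$ furnished by Corollary~\ref{R12} is preserved under the base change $X\mapsto X\times_{\mathrm{Spec}\,\Com}\mathrm{Spec}\,R$ (and, when $\field=\Real$, by first passing to $R':=R\otimes_\Real\Com$ and then intersecting back with $\Pro^{2nm+1}_R$). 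With this identity in hand, both directions of 2) follow immediately from Lemma~\ref{R25}, and no Galois descent enters the argument at all.

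The gap you flag in your converse for i) is real, not just a missing detail, and your route cannot be completed as sketched. The fibre of $\phi^{(1)}_{g,\overline K,n}$ over a $K$-rational coefficient vector is indeed a finite $\mathrm{Aut}(\overline K/K)$-stable set, but that alone does not force a $K$-rational point: generically this fibre is a torsor under the finite group $\{\ell:g\circ\ell=g\}$ of deck transformations of $g$, and such a torsor can be nontrivial. Concretely, take $g(x)=x^2$, $n=1$, $K=\Real(t)$ and $f(x)=-(x-t)^2/(x+t)^2$; the coefficient vector of $f$ lies in the locus (both discriminants vanish and the $q$-part is nonzero), yet the only preimages are $\pm i\,(x-t)/(x+t)$, exchanged by complex conjugation, neither $K$-rational. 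So ``argue it must contain a $K$-rational point'' fails in general. The paper sidesteps this entirely by asserting the image identity directly over $R$ rather than only over algebraically closed extensions; that base-change step, not a fibrewise descent, is what is missing from your outline.
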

  \begin{proof}
  The proofs of i) and ii) are symmetric so ii) can be proven \emph{mutatis mutandis} the proof of i); thus we only show i). From Remark \ref{R24}, $\Fc^{(1)}_{g,n}$ is a family of homogeneous polynomials of degree $m$. From Remark \ref{R26}, there is not a common zero of the polynomials $q_{1,0},q_{1,1},\ldots,q_{1,nm}$ in $U_{\Com,n}$. Hence  $\phi^{(1)}_{g,\Com,n}$ satisfy the assumptions of Corollary \ref{R12}, and thereby there exists $\Gc^{(1)}_{g,n}:=\{r_{1,j}\}_{j=1}^{n_1}\cup \{s_{1,j}\}_{j=1}^{m_1}$ in $\Com[z_0,z_1,\ldots, z_{nm},w_0,w_1,\ldots,w_{nm}]$ such that
  \begin{equation}
  \label{E22}
   \max_{1\leq i\leq n_1}\deg r_{1,i}\leq2\left(\frac{m^2}{2}+m\right)^{2^{2nm+2n+3}} \leq 2\left(\frac{n^2m^2}{2}+nm\right)^{2^{4nm+3}}
  \end{equation}
  and 
  \begin{equation}
  \label{E23}
 \phi^{(1)}_{g,\Com,n}(U_{\Com,n})=\Zc_\Com\left(\left\{r_{1,j}\right\}_{j=1}^{n_1}\right)\cap \left(\Pro^{2nm+1}_\Com\setminus \Zc_\Com\left(\left\{s_{1,j}\right\}_{j=1}^{n_1}\right)\right).
  \end{equation}
  We shall show that $\Gc^{(1)}_{g,n}$ is the family we are looking for. First note that 1) follows from (\ref{E22}); thus it remains to prove 2). We claim that for any $R\in \Pc_\field$,
  \begin{equation}
  \label{E24}
   \phi^{(1)}_{g,R,n}(U_{R,n})=\Zc_R\left(\left\{r_{1,j}\right\}_{j=1}^{n_1}\right)\cap \left(\Pro^{2nm+1}_R\setminus \Zc_R\left(\left\{s_{1,j}\right\}_{j=1}^{n_1}\right)\right).
  \end{equation}
  Indeed, if $\field=\Com$, then we extend the basis by $R$ (i.e. $X\mapsto X\times_{\mathrm{Spec }\Com}\mathrm{Spec }R$) for the sets $U_{\Com,n}, \Zc_\Com\left(\left\{r_{1,j}\right\}_{j=1}^{n_1}\right), \Pro^{2nm+1}_\Com\setminus \Zc_\Com\left(\left\{s_{1,j}\right\}_{j=1}^{n_1}\right)$ and the morphism $\phi^{(1)}_{g,\Com,n}$, and then (\ref{E23}) implies (\ref{E24}). If $\field=\Real$, take $R':=R\otimes _\Real\Com$; the definition of $\Pc_\Real$ implies that there are  algebraically independent variables $x_1,x_2,\ldots,x_k$  over $\Real$ such that $R=\Real[x_1,x_2,\ldots,x_k]$ so 
  $R'=\Com[x_1,x_2,\ldots,x_k]$. The natural embedding $\Real\hookrightarrow \Com$ induces the embeddings $\Pro^{2n+1}_R\hookrightarrow \Pro^{2n+1}_{R'}$ and $\Pro^{2nm+1}_R\hookrightarrow \Pro^{2nm+1}_{R'}$. In the complex case, we already saw that 
  \begin{equation}
  \label{E25}
     \phi^{(1)}_{g,R',n}(U_{R',n})=\Zc_{R'}\left(\left\{r_{1,j}\right\}_{j=1}^{n_1}\right)\cap \left(\Pro^{2nm+1}_{R'}\setminus \Zc_{R'}\left(\left\{s_{1,j}\right\}_{j=1}^{n_1}\right)\right),
  \end{equation}
  but also 
  \begin{align}
  \label{E26}
  \phi^{(1)}_{g,R,n}(U_{R,n})&=\phi^{(1)}_{g,R',n}(U_{R',n})\cap \Pro^{2nm+1}_R\nonumber\\
  \Zc_{R}\left(\left\{r_{1,j}\right\}_{j=1}^{n_1}\right)&=\Zc_{R'}\left(\left\{r_{1,j}\right\}_{j=1}^{n_1}\right)\cap\Pro^{2nm+1}_R\nonumber\\
  \Pro^{2nm+1}_{R}\setminus \Zc_{R}\left(\left\{s_{1,j}\right\}_{j=1}^{n_1}\right)&=\left(\Pro^{2nm+1}_{R'}\setminus \Zc_{R'}\left(\left\{s_{1,j}\right\}_{j=1}^{n_1}\right)\right)\cap \Pro_{R}^{2nm+1}.
  \end{align}
  Thus (\ref{E25}) and (\ref{E26}) implies (\ref{E24}) when $\field=\Real$, and therefore (\ref{E24}) holds in any case. Assume that $f=g\circ h$. Then
  \begin{equation*}
  \deg h=\frac{\deg f}{\deg g}=n.
  \end{equation*}
  From Lemma \ref{R25} i), there is $\bb\in U_{R,n}$ such that 
 \begin{equation}
 \label{E27}
 f(x)=\frac{\sum_{i=0}^{nm}p_{1,i}(\bb)x^i}{\sum_{i=0}^{nm}q_{1,i}(\bb)x^i}
 \end{equation}
 Hence (\ref{E27}) leads to the existence of $\ab=[a_{p,0}:\ldots:a_{p,nm}:a_{q,0}:\ldots:a_{q,nm}] \in  \phi^{(1)}_{g,R,n}(U_{R,n})$ such that
 \begin{equation*}
 f(x)=\frac{\sum_{i=0}^{nm}a_{p,i}x^i}{\sum_{i=0}^{nm}a_{q,i}x^i},
 \end{equation*}
 and then $\ab\in \Zc_R\left(\left\{r_{1,j}\right\}_{j=1}^{n_1}\right)\cap \left(\Pro^{2nm+1}_R\setminus \Zc_R\left(\left\{s_{1,j}\right\}_{j=1}^{n_1}\right)\right)$ by (\ref{E24}). Now assume that there is $\ab=[a_{p,0}:\ldots:a_{p,nm}:a_{q,0}:\ldots:a_{q,nm}]\in \Zc_R\left(\left\{r_{1,j}\right\}_{j=1}^{n_1}\right)\cap \left(\Pro^{2nm+1}_R\setminus \Zc_R\left(\left\{s_{1,j}\right\}_{j=1}^{n_1}\right)\right)$ such that 
 \begin{equation*}
 f(x)=\frac{\sum_{i=0}^{nm}a_{p,i}x^i}{\sum_{i=0}^{nm}a_{q,i}x^i},
 \end{equation*}
 From (\ref{E24}), $\ab \in \phi^{(1)}_{g,R,n}(U_{R,n})$ so there is $\bb\in U_{R,n}$ such that $\phi^{(1)}_{g,R,n}(\bb)=\ab$ and thereby
 \begin{equation}
 \label{E28}
 f(x)=\frac{\sum_{i=0}^{nm}p_{1,i}(\bb)x^i}{\sum_{i=0}^{nm}q_{1,i}(\bb)x^i}.
 \end{equation}
 Finally, Lemma \ref{R25} i) and (\ref{E28}) imply the existence of $h\in R(x)$ with $\deg h\leq n$ such that $f=g\circ h$.
  \end{proof}
  For any $d\in\Pos$, we define
  \begin{align*}
  c_{3,d}:=(2d^2)^{\max\{2,d\}^{4d+5}}.
  \end{align*}
   \begin{prop}
  \label{R28}
  Let $\field\in\{\Real,\Com\}$, $f\in\field(x_1,x_2)$ with $d:=\deg f$ , $g\in\field(x)$ and $A$ be a subset of $\field$.
  \begin{enumerate}
  \item[i)]Assume that for all $a\in A$, there is $h_a\in \field(x_1)$ such that $f(x_1,a)=g(h_a(x_1))$.  If $|A|\geq c_{3,d}$, then there is $h\in\field(x_1,x_2)$ such that $f(x_1,x_2)=g(h(x_1,x_2))$.
  \item[ii)]Assume that for all $a\in A$, there is $h_a\in \field(x)$ such that $f(x_1,a)=h_a(g(x_1))$.  If $|A|\geq c_{3,d}$, then there is $h\in\field(x_1,x_2)$ such that $f(x_1,x_2)=h(g(x_1),x_2)$.
  \end{enumerate}
  \end{prop}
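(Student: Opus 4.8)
The plan is to invoke Theorem~\ref{R27} twice: once over the base ring $\field$ (one instance per $a\in A$) and once over the base ring $\field[x_2]\in\Pc_\field$, and to link the two by a polynomial‑identity argument. The hypothesis says that for each $a$ the univariate function $f(x_1,a)$ factors through $g$, so by Theorem~\ref{R27} the coefficient vector of $f(x_1,a)$ lies on a \emph{fixed} constructible set, namely the one cut out by the polynomial family $\Gc^{(1)}_{g,n}$ of Theorem~\ref{R27}~i). Since $A$ contains far more points than the degrees of the polynomials defining this set, a root‑counting argument in the variable $x_2$ will force those polynomials to vanish identically along the parametrized curve $x_2\mapsto(\text{coefficients of }f(x_1,x_2))$; equivalently, the coefficient vector of $f(x_1,x_2)$, with entries now in $\field[x_2]$, lies on the same set. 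A second application of Theorem~\ref{R27}, this time with $R=\field[x_2]$, will then return $h\in\field(x_1,x_2)$ with $f=g\circ h$. What makes this work is precisely the \emph{uniformity in the base ring} $R$ built into Theorem~\ref{R27}.

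I would carry this out for i); ii) is word for word the same, using Theorem~\ref{R27}~ii) and the family $\Gc^{(2)}_{g,n}$. We may assume $\deg_{x_1}f\geq 1$ (the case relevant to the applications), so that $m:=\deg g\geq 1$ and $nm:=\deg_{x_1}f\in\{1,\ldots,d\}$. Write $f=\frac{P}{Q}$ with $P,Q\in\field[x_1,x_2]$, $(P,Q)=1$, and expand $P=\sum_iP_i(x_2)x_1^i$ and $Q=\sum_iQ_i(x_2)x_1^i$ with $P_i,Q_i\in\field[x_2]$ of degree at most $d$. First I would remove a small exceptional set: let $E$ be the set of $a\in\field$ for which $(P(x_1,a),Q(x_1,a))\neq 1$, or the leading $x_1$‑coefficient of $P$ or of $Q$ vanishes at $a$; by Corollary~\ref{R15} and a degree count, $|E|\leq d^{2d}+2d$. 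For $a\in A\setminus E$ the function $f(x_1,a)=\frac{P(x_1,a)}{Q(x_1,a)}$ is in lowest terms of $x_1$‑degree exactly $nm$, so $f(x_1,a)=g(h_a(x_1))$ forces $\deg h_a=n$; thus the hypotheses of Theorem~\ref{R27}~i) hold for all these $a$ with one and the same $n$.

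Then I would fix $\Gc^{(1)}_{g,n}=\{r_{1,j}\}_{j=1}^{n_1}\cup\{s_{1,j}\}_{j=1}^{m_1}$ as in Theorem~\ref{R27}~i). Applying Theorem~\ref{R27}~i)~2) with $R=\field$, $x=x_1$, to each $a\in A\setminus E$, and using that the lowest‑terms representative of $f(x_1,a)$ of $x_1$‑degree $nm$ is unique up to a scalar, I obtain
\begin{equation*}
\pi(a):=[P_0(a):\ldots:P_{nm}(a):Q_0(a):\ldots:Q_{nm}(a)]\in\Zc_\field(\{r_{1,j}\}_{j=1}^{n_1})\cap\bigl(\Pro_\field^{2nm+1}\setminus\Zc_\field(\{s_{1,j}\}_{j=1}^{m_1})\bigr)
\end{equation*}
for every $a\in A\setminus E$. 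Hence, for each $j$, the polynomial $\rho_j(x_2):=r_{1,j}\bigl(P_0(x_2),\ldots,P_{nm}(x_2),Q_0(x_2),\ldots,Q_{nm}(x_2)\bigr)$ (in $\field[x_2]$, or in $\Com[x_2]$ when $\field=\Real$, exactly as in the proof of Theorem~\ref{R27}) vanishes at every point of $A\setminus E$. But $\deg\rho_j\leq d\cdot\max_i\deg r_{1,i}\leq 2d\bigl(\tfrac{n^2m^2}{2}+nm\bigr)^{2^{4nm+3}}$, and since $nm\leq d$ this quantity, compared with $c_{3,d}=(2d^2)^{\max\{2,d\}^{4d+5}}$, is strictly smaller than $|A|-|E|\leq|A\setminus E|$; therefore $\rho_j\equiv 0$. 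Likewise the polynomials $s_{1,j}\bigl(P_0(x_2),\ldots,Q_{nm}(x_2)\bigr)$ cannot all be identically zero, for otherwise any $a\in A\setminus E$ (which exists, since $|A|\geq c_{3,d}>|E|$) would put $\pi(a)$ in $\Zc_\field(\{s_{1,j}\}_{j=1}^{m_1})$. Consequently
\begin{equation*}
\Pi:=[P_0(x_2):\ldots:P_{nm}(x_2):Q_0(x_2):\ldots:Q_{nm}(x_2)]\in\Zc_{\field[x_2]}(\{r_{1,j}\}_{j=1}^{n_1})\cap\bigl(\Pro_{\field[x_2]}^{2nm+1}\setminus\Zc_{\field[x_2]}(\{s_{1,j}\}_{j=1}^{m_1})\bigr),
\end{equation*}
and $f=\frac{\sum_iP_i(x_2)x_1^i}{\sum_iQ_i(x_2)x_1^i}$ has $x_1$‑degree $nm$. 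Applying Theorem~\ref{R27}~i)~2) once more, now with $R=\field[x_2]$ and $x=x_1$, produces $h\in\field[x_2](x_1)=\field(x_1,x_2)$ with $f=g\circ h$, i.e. $f(x_1,x_2)=g(h(x_1,x_2))$, which is i). For ii), the identical argument with Theorem~\ref{R27}~ii) — where the relevant bound $\max_i\deg r_{2,i}\leq 2(\tfrac32)^{2^{4nm+3}}$ is even smaller — gives $h\in\field(x_1,x_2)$ with $f=h\circ g$, that is, $f(x_1,x_2)=h(g(x_1),x_2)$.

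The core of the argument, and the only nontrivial step, is this double use of the base‑ring‑uniform Theorem~\ref{R27}, which converts the assertion that $f(x_1,a)$ factors through $g$ for many $a$ into the assertion that $f(x_1,x_2)$ factors through $g$, via a polynomial identity over $\field[x_2]$. I expect the two points needing care to be: (a) the preliminary removal of $E$, which forces the $x_1$‑degree of $f(x_1,a)$ to equal the fixed value $nm$ for all retained $a$, so that a single family $\Gc^{(i)}_{g,n}$ can be used; and (b) the bookkeeping confirming that $c_{3,d}$ dominates $\deg\rho_j$ — a routine but tedious comparison of the iterated‑exponential bounds coming from Theorem~\ref{R27}.
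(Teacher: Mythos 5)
Your proposal is correct and follows essentially the same route as the paper's own proof: remove the small exceptional set of $a$ where the fraction degenerates in $x_1$ (using Corollary \ref{R15} and a leading-coefficient count), apply Theorem \ref{R27} pointwise over $R=\field$ to place the coefficient vector $\bb(a)$ on the constructible set cut out by $\Gc^{(1)}_{g,n}$, conclude by root-counting that $r_{1,j}(\bb(x_2))\equiv 0$ while the $s_{1,j}(\bb(x_2))$ do not all vanish, and finally reapply Theorem \ref{R27} over $R=\field[x_2]$. The only cosmetic difference is that you excise $a$'s where the leading coefficient of $P$ or $Q$ vanishes, whereas the paper only needs one of the two (yielding $|A_1|\leq d$ instead of $2d$); this changes nothing since $c_{3,d}$ absorbs the slack.
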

  \begin{proof}
  Set $m:=\deg g$ and $d_1:=\deg_{x_1} f$. Write $f=\frac{p}{q}$ with $p,q\in\field[x_1,x_2]$ and $(p,q)=1$; also write $p(x_1,x_2)=\sum_{i=0}^{d_1}p_i(x_2)x_1^i$ and $q(x_1,x_2)=\sum_{i=0}^{d_1}q_i(x_2)x_1^i$ where $p_0,p_1,\ldots, p_{d_1},q_0,\ldots, q_{d_1}\in\field[x_2]$. Since $d_1=\deg_{x_1}f$, we have that $p_{d_1}\neq 0$ or $q_{d_1}\neq 0$; assume without loss of generality that $p_{d_1}\neq 0$. Set
  \begin{align*}
  A_1&:=\{a\in A:\;p_{d_1}(a)=0\}\\
  A_2&:=\{a\in A:\;(p(x_1,a),q(x_1,a))\neq 1\}\\
  A'&:=A\setminus (A_1\cup A_2).
  \end{align*}
  On the one hand, since $\deg_{x_2} p_{d_1}\leq d$, 
  \begin{equation}
  \label{E29}
  |A_1|\leq d.
  \end{equation}
  On the other hand, Corollary \ref{R15} yields that 
  \begin{equation}
  \label{E30}
  |A_2|\leq d^{2d}.
  \end{equation}
  Then (\ref{E29}) and (\ref{E30}) imply that
  \begin{equation}
  \label{E31}
  |A'|\geq |A|-d-d^{2d}\geq c_{3,d}-d-d^{2d}>2d\left(\frac{d^2}{2}+d\right)^{2^{4d+3}}.
  \end{equation}
  Now we start with the proof of i). Set  $n:=\frac{d_1}{m}$. For any $a\in A'$, we have that $(p(x_1,a),q(x_1,a))=1$ and  $p_{d_1}(a)\neq 0$ so $\deg_{x_1}f(x_1,a)=d_1$; thus 
    \begin{equation*}
    \deg_{x_1} h_a=\frac{\deg_{x_1} f(x_1,a)}{\deg_{x_1}g(x_1)}=\frac{d_1}{m}=n.
  \end{equation*}
  Let $\Gc^{(1)}_{g,n}=\{r_{1,j}\}_{j=1}^{n_1}\cup \{s_{1,j}\}_{j=1}^{m_1}$ be the family of polynomials found in Theorem \ref{R27} and define
  \begin{equation*}
  \bb(x_2):=[p_0(x_2):p_1(x_2):\ldots:p_{d_1}(x_2):q_0(x_2):\ldots:q_{d_1}(x_2)]
  \end{equation*}
    Take $a\in A'$. Since $f(x_1,a)=g(h_a(x_1))$, Theorem \ref{R27} i) implies the existence of $\ab=[a_{p,0}:\ldots:a_{p,nm}:a_{q,0}:\ldots:a_{q,nm}]\in \Zc_\field\left(\left\{r_{1,j}\right\}_{j=1}^{n_1}\right)\cap \left(\Pro^{2nm+1}_\field\setminus \Zc_\field\left(\left\{s_{1,j}\right\}_{j=1}^{n_1}\right)\right)$ such that 
   \begin{equation}
   \label{E32}
   f(x_1,a)=\frac{\sum_{i=0}^{nm}a_{p,i}x_1^i}{\sum_{i=0}^{nm}a_{q,i}x_1^i}.
   \end{equation}
  On the one hand, $(p(x_1,a),q(x_1,a))=1$ and  $f(x_1,a)=\frac{p(x_1,a)}{q(x_1,a)}$; thus (\ref{E32}) yields that there is $r\in\field[x_1]$ such that $\sum_{i=0}^{nm}a_{p,i}x_1^i=r(x_1)p(x_1,a)$ and  $\sum_{i=0}^{nm}a_{q,i}x_1^i=r(x_1)q(x_1,a)$. On the other hand, 
  \begin{align*}
 \max\left\{\deg_{x_1} \sum_{i=0}^{nm}a_{p,i}x_1^i,\deg_{x_1} \sum_{i=0}^{nm}a_{q,i}x_1^i\right\}&\leq nm\\
 &=d_1\\
 &=\deg_{x_1} f(x_1,a)\\
 &=\max\{\deg_{x_1}p(x_1,a),\deg_{x_1}q(x_1,a)\},
\end{align*}   
   so $\deg_{x_1}r(x_1)=0$ and therefore $\bb(a)=\ab$. Hence we have proven that for all $a\in A'$,
   \begin{equation}
   \label{E33}
   \bb(a)\in \Zc_\field\left(\left\{r_{1,j}\right\}_{j=1}^{n_1}\right)\cap \left(\Pro^{2d_1+1}_\field\setminus \Zc_\field\left(\left\{s_{1,j}\right\}_{j=1}^{n_1}\right)\right).
   \end{equation}
   
   On the one hand, since 
   \begin{equation*}
   \max\{\deg_{x_2}p_0,\ldots,\deg_{x_2}p_{d_1},\deg_{x_2}q_0,\ldots,\deg_{x_2}q_{d_1}\}\leq d
   \end{equation*}
   and 
   \begin{equation*}
   \max_{1\leq i\leq n_1}\deg r_{1,i}\leq 2\left(\frac{d^2}{2}+d\right)^{2^{4d+3}},
   \end{equation*}
  we get that  for each $i\in\{1,2,\ldots, n_1\}$,   
   the polynomial $r_{1,i}(\bb(x_2))=\\r_{1,i}(p_0(x_2),\ldots,p_{d_1}(x_2),q_0(x_2),\ldots,q_{d_1}(x_2))$ has degree at most $d\cdot 2\left(\frac{d^2}{2}+d\right)^{2^{4d+3}}$; however, (\ref{E33}) implies that $r_{1,i}(\bb(x_2))$ has at least $|A'|$ roots, and (\ref{E31}) yields that $|A'|>d\cdot 2\left(\frac{d^2}{2}+d\right)^{2^{4d+3}}$ so $r_{1,i}(\bb(x_2))$ is the zero polynomial in $\field[x_2]$.  Hence
   \begin{equation}
   \label{E34}
   \bb(x_2)\in \Zc_{\field[x_2]}\left(\left\{r_{1,j}\right\}_{j=1}^{n_1}\right).
   \end{equation}
   
   On the other hand, for each $i\in\{1,2,\ldots, m_1\}$,   
   the polynomial $s_{1,i}(\bb(x_2))=\\s_{1,i}(p_0(x_2),\ldots,p_{d_1}(x_2),q_0(x_2),\ldots,q_{d_1}(x_2))$ is nonzero in $A'$ by (\ref{E33}) so it is not the zero polynomial in $\field[x_2]$; thereby
   \begin{equation}
   \label{E35}
   \bb(x_2)\in \Pro^{2d_1+1}_{\field[x_2]}\setminus \Zc_{\field[x_2]}\left(\left\{s_{1,j}\right\}_{j=1}^{n_1}\right).
   \end{equation}
      From  (\ref{E34}) and (\ref{E35}),
   \begin{equation*}
   \bb(x_2)\in \Zc_{\field[x_2]}\left(\left\{r_{1,j}\right\}_{j=1}^{n_1}\right)\cap \Pro^{2d_1+1}_{\field[x_2]}\setminus \Zc_{\field[x_2]}\left(\left\{s_{1,j}\right\}_{j=1}^{n_1}\right).
   \end{equation*}
  Thus, from Theorem \ref{R27} i),  we conclude there is $h\in (\field[x_2])(x_1)\simeq \field (x_1,x_2)$ such that $f(x_1,x_2)=g(h(x_1,x_2))$. 
   
   The proof of ii) is symmetric to the proof of i) and it can be done \emph{mutatis mutandis}.
  \end{proof}
  \begin{lem}
  \label{R29}
  Let $\field\in\{\Real,\Com\}$. For $i\in\{1,2\}$, let $p_{i,1},p_{i,2},q_{i,1},q_{i,2}\in\field[x_i]$ be such that $(p_{i,1},q_{i,1})=1$ and $(p_{i,2},q_{i,2})=1$, and set $f_{i,1}:=\frac{p_{i,1}}{q_{i,1}}, f_{i,2}:=\frac{p_{i,2}}{q_{i,2}}\in\field(x_i)$. Define $r_1(x_1,x_2):=p_{1,1}(x_1)q_{2,1}(x_2)-p_{2,1}(x_2)q_{1,1}(x_1)$ and $r_2(x_1,x_2):=p_{1,2}(x_1)q_{2,2}(x_2)-p_{2,2}(x_2)q_{1,2}(x_1)$ polynomials in $\field[x_1,x_2]$. Assume that there is a nonconstant irreducible $r\in\field[x_1,x_2]$ which divides $r_1$ and $r_2$. Then there are $g_1,g_2,h_1,h_2\in\field(x)$ such that
  \begin{align*}
  f_{1,1}(x_1)&=g_1(h_1(x_1))&f_{2,1}(x_2)=g_1(h_2(x_2))\\
   f_{1,2}(x_1)&=g_2(h_1(x_1))&f_{2,2}(x_2)=g_2(h_2(x_2)).
  \end{align*}
  \end{lem}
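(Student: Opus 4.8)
The plan is to work in the function field of the irreducible plane curve $\{r=0\}$ and apply L\"{u}roth's Theorem (Theorem \ref{R6}).

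First I would reduce to the case in which $r$ genuinely involves both variables, i.e.\ $\deg_{x_1}r\geq 1$ and $\deg_{x_2}r\geq 1$. If, say, $r$ is a nonconstant polynomial in $x_1$ alone, then writing $r_1=\sum_k\bigl(c_kp_{1,1}(x_1)-d_kq_{1,1}(x_1)\bigr)x_2^k$ (with $q_{2,1}=\sum_kc_kx_2^k$ and $p_{2,1}=\sum_kd_kx_2^k$) and using $r\mid r_1$ together with $(p_{1,1},q_{1,1})=1$ forces all the vectors $(c_k,d_k)$ to be proportional, hence --- by $(p_{2,1},q_{2,1})=1$ --- forces $f_{2,1}$ to be constant; likewise $f_{2,2}$ is constant, and this degenerate configuration has to be dealt with by a separate, more elementary argument. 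Assuming now $\deg_{x_1}r,\deg_{x_2}r\geq 1$: since $(r)$ is a prime ideal of $\field[x_1,x_2]$, the quotient is a domain, and I set $\NF:=\Quot(\field[x_1,x_2]/(r))$ and let $\xi_1,\xi_2\in\NF$ be the images of $x_1,x_2$. Because $r$ is irreducible and involves both variables, $\xi_1$ and $\xi_2$ are each transcendental over $\field$; consequently the evaluation maps $\field(x_1)\to\NF$, $x_1\mapsto\xi_1$ and $\field(x_2)\to\NF$, $x_2\mapsto\xi_2$ are injective with images $\field(\xi_1),\field(\xi_2)$, and $q_{1,1}(\xi_1),q_{1,2}(\xi_1),q_{2,1}(\xi_2),q_{2,2}(\xi_2)$ are all nonzero in $\NF$ by coprimality.

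Next I would push the divisibility hypotheses into $\NF$. From $r\mid r_1$ the relation $r_1=0$ holds in $\NF$, that is $p_{1,1}(\xi_1)q_{2,1}(\xi_2)=p_{2,1}(\xi_2)q_{1,1}(\xi_1)$; dividing by the nonzero denominators gives $f_{1,1}(\xi_1)=f_{2,1}(\xi_2)$, and I call this common element $u_1\in\NF$. Similarly $r\mid r_2$ yields $u_2:=f_{1,2}(\xi_1)=f_{2,2}(\xi_2)\in\NF$. The key point is that $u_1,u_2\in\field(\xi_1)\cap\field(\xi_2)$, so the subfield $\field(u_1,u_2)$ of $\NF$ is contained both in $\field(\xi_1)\cong\field(x)$ and in $\field(\xi_2)\cong\field(x)$. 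Applying L\"{u}roth to $\field\subseteq\field(u_1,u_2)\subseteq\field(\xi_1)$, Theorem \ref{R6} provides $t\in\field(u_1,u_2)$ with $\field(u_1,u_2)=\field(t)$. Since $t\in\field(\xi_1)$, the isomorphism $\field(x_1)\xrightarrow{\ \sim\ }\field(\xi_1)$ lets me write $t=h_1(\xi_1)$ for some $h_1\in\field(x_1)$, and similarly $t=h_2(\xi_2)$ for some $h_2\in\field(x_2)$; since $u_1,u_2\in\field(t)$, I write $u_1=g_1(t)$ and $u_2=g_2(t)$ with $g_1,g_2\in\field(x)$. Then $f_{1,1}(\xi_1)=u_1=g_1(h_1(\xi_1))$, and pulling this equality back along the injective map $\field(x_1)\to\NF$ gives $f_{1,1}(x_1)=g_1(h_1(x_1))$; in exactly the same way $f_{1,2}(x_1)=g_2(h_1(x_1))$, while pulling back along $\field(x_2)\to\NF$ gives $f_{2,1}(x_2)=g_1(h_2(x_2))$ and $f_{2,2}(x_2)=g_2(h_2(x_2))$, as required.

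The step I expect to be the main obstacle is the reduction in the first paragraph: the function-field argument breaks down precisely when $\{r=0\}$ is a coordinate line, and handling that degenerate case (where some of the $f_{i,j}$ are forced to be constant) seems to be the delicate part, presumably requiring extra structure coming from the way $r_1,r_2$ arise in the application. Everything after that reduction is routine bookkeeping in $\NF$, once the transcendence of $\xi_1,\xi_2$ and the non-vanishing of the denominators are in place.
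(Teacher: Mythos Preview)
Your function-field argument is correct and is essentially the same idea as the paper's: both establish $f_{1,j}(x_1)\equiv f_{2,j}(x_2)$ modulo $r$ and then invoke L\"uroth's Theorem. The difference is organizational. The paper applies L\"uroth directly inside $\field(x_1)$ to $\field(f_{1,1}(x_1),f_{1,2}(x_1))$, obtaining $h_1,g_1,g_2$ and a rational function $g$ with $h_1=g(f_{1,1},f_{1,2})$; it then \emph{defines} $h_2(x_2):=g(f_{2,1}(x_2),f_{2,2}(x_2))$ and checks, via the $r$-adic valuation on $\field(x_1,x_2)$, that $f_{2,j}\equiv g_j\circ h_2$; equality follows because $\field[x_2]\cap\langle r\rangle=\{0\}$ whenever $\deg_{x_1}r\geq 1$. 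The point is that this last step needs only $\deg_{x_1}r\geq 1$, which holds without loss of generality, so the paper never splits off the ``coordinate line'' case at all.

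In other words, the obstacle you flag as the delicate part---$r$ depending on only one variable---is an artifact of working in the function field $\NF$, where you need \emph{both} $\xi_1$ and $\xi_2$ transcendental to pull back along both embeddings $\field(x_i)\hookrightarrow\NF$. If instead you apply L\"uroth in $\field(x_1)$ before ever mentioning $r$, and manufacture $h_2$ by hand from the expression of $h_1$ in terms of $f_{1,1},f_{1,2}$, then only the injection $\field(x_2)\hookrightarrow\field(x_1,x_2)/\!\!\equiv_r$ is needed, and that is guaranteed by $\deg_{x_1}r\geq 1$ alone. So your proof is fine in the generic case and your instinct about the structure is right, but the degenerate case is not actually delicate: a small reorganization absorbs it completely.
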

  \begin{proof}
  Since $r$ is not constant, $\deg_{x_1}r\geq 1$ or $\deg_{x_2}r\geq 1$; assume without  loss of generality that $\deg_{x_1}r\geq 1$. From Theorem \ref{R6}, the field $\field\subseteq \field(f_{1,1}(x_1), f_{1,2}(x_1))\\\subseteq \field(x_1)$ is a simple extension so there is $h_1\in\field(x)$ such that 
  \begin{equation*}
\field(f_{1,1}(x_1), f_{1,2}(x_1))= \field(h_1(x_1)).
  \end{equation*}
  Therefore there are $g_1,g_2\in\field(x)$ and $g\in\field(y_1,y_2)$ such that $f_{1,1}(x_1)=g_1(h_1(x_1)),\\ f_{1,2}(x_1)=g_2(h_1(x_1))$ and $h_1(x_1)=g(f_{1,1}(x_1),f_{1,2}(x_1))$. Define $h_2(x_2):=\\g(f_{2,1}(x_2),f_{2,2}(x_2))$. For any $p\in\field[x_1,x_2]\setminus\{0\}$, let $v_r(p)$ be the biggest $n\in\Nat $ such that $r^n$ divides $p$. The valuation is extended to $\field(x_1,x_2)$  taking $v_r\left(\frac{p}{q}\right)=v_r(p)-v_r(q)$ for $p,q\in\field[x_1,x_2]\setminus \{0\}$, and $v_r(0)=\infty$. For any $f_1,f_2\in\field(x_1,x_2)$, we write $f_1\equiv f_2$ if  $v_r(f_1-f_2)>0$.   We claim that for $i,j\in\{1,2\}$,  neither $q_{i,j}(x_i)$ nor $p_{i,j}(x_i)$ is divisible by $r(x_1,x_2)$. We prove this claim for $q_{1,1}$ and the other cases are done likewise.  If $r$ divides $q_{1,1}(x_1)$, then $r$ divides $q_{1,1}(x_1)p_{2,1}(x_2)$; therefore, since $r$ divides $r_1$ , $r$ divides $p_{1,1}(x_1)q_{2,1}(x_2)$; insomuch as $\deg_{x_1}r\geq 1$, we conclude that $r$ cannot divide $q_{2,1}(x_2)$ so $r$ has a common factor with $p_{1,1}(x_1)$; however this is impossible since $(p_{1,1},q_{1,1})=1$. Therefore,  since $r$ divides $r_1$ and $r_2$, we get that
   \begin{align}
   \label{E36}
   f_{1,1}(x_1)&\equiv f_{2,1}(x_2)& f_{1,2}(x_1)\equiv f_{2,2}(x_2).
   \end{align}
   Then
   \begin{align}
   \label{E37}
   h_2(x_2)&=g(f_{2,1}(x_2),f_{2,2}(x_2))\nonumber\\
      &\equiv g(f_{1,1}(x_1),f_{1,2}(x_1))&\Big(\text{by (\ref{E36})}\Big)\nonumber\\
       &=h_1(x_1).
   \end{align}
   Moreover, for $j\in\{1,2\}$, 
   \begin{align}
   \label{E38}
   f_{2,j}(x_2)&\equiv f_{1,j}(x_1)&\Big(\text{by (\ref{E36})}\Big)\nonumber\\
   &=g_j(h_1(x_1))\nonumber\\
   &\equiv g_j(h_2(x_2)).&\Big(\text{by (\ref{E37})}\Big)
   \end{align}
  We conclude the proof showing that not only $f_{2,j}\equiv g_j(h_2(x_2))$ but also $f_{2,j}=g_j(h_2(x_2))$ for $j\in\{1,2\}$.  Let $r_1,r_2,s_1,s_2\in \field[x_2]$ be such that $g_1(h_2(x_2))=\frac{r_1(x_2)}{s_1(x_2)}$ and $g_2(h_2(x_2))=\frac{r_2(x_2)}{s_2(x_2)}$ with $(r_1,s_1)=(r_2,s_2)=1$. For $j\in\{1,2\}$, on the one hand, $r_j(x_2)q_{2,j}(x_2)-s_j(x_2)p_{2,j}(x_2)\in\field[x_2]$; on the other hand, (\ref{E38}) implies that $r_j(x_2)q_{2,j}(x_2)-s_j(x_2)p_{2,j}(x_2)$ is in the ideal $\left<r(x_1,x_2)\right>$ of $\field[x_1,x_2]$; thus
   \begin{equation}
   \label{E39}
   r_j(x_2)q_{2,j}(x_2)-s_j(x_2)p_{2,j}(x_2)\in\field[x_2]\cap \left<r(x_1,x_2)\right>.
   \end{equation}
   However, since $\deg_{x_1}r\geq 1$, any nonzero multiple of $r$ has degree at least $1$ with respect to $x_1$ so 
   \begin{equation*}
   \field[x_2]\cap \left<r(x_1,x_2)\right>=\{0\}.
   \end{equation*}
   Then (\ref{E39}) implies that  $f_{2,1}(x_2)=g_1(h_2(x_2))$ and $ f_{2,2}(x_2)=g_2(h_2(x_2))$, and this concludes the proof. 
  \end{proof}
  \begin{lem}
  \label{R30}
  Let $f\in\Com(x_1,x_2)$ and $A$ be a subset of $\Com$ such that $|A|> 2\deg f+(\deg f)^{\deg f}$. Then 
  \begin{align*}
  \deg_{x_1}f&=\max_{a\in A}\deg_{x_1}f(x_1,a)\\
   \deg_{x_2}f&=\max_{a\in A}\deg_{x_2}f(a,x_2).
  \end{align*}
  \end{lem}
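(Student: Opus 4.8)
The plan is to establish the identity for $\deg_{x_1}$; the one for $\deg_{x_2}$ then follows by interchanging $x_1$ and $x_2$. Write $f=p/q$ with $p,q\in\Com[x_1,x_2]$ and $(p,q)=1$, and set $d:=\deg f=\max\{\deg p,\deg q\}$ and $d_1:=\deg_{x_1}f=\max\{\deg_{x_1}p,\deg_{x_1}q\}$; the case $d_1=0$ is trivial, so I would assume $d_1\geq 1$. First I would record the easy inequality: for any $a\in\Com$ with $q(x_1,a)$ not the zero polynomial, both $p(x_1,a)$ and $q(x_1,a)$ have $x_1$-degree at most $d_1$, and reducing to lowest terms never raises degrees, so $\deg_{x_1}f(x_1,a)\leq d_1$; hence $\max_{a\in A}\deg_{x_1}f(x_1,a)\leq d_1$, and everything comes down to exhibiting a single $a\in A$ at which equality holds.

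Next I would pin down the set of $a$ where equality can fail. Let $p_{d_1},q_{d_1}\in\Com[x_2]$ be the coefficients of $x_1^{d_1}$ in $p$ and in $q$; at least one of them is nonzero, and a nonzero one has degree at most $d$. Put $\mathcal B_1:=\{a\in\Com:\,p_{d_1}(a)=q_{d_1}(a)=0\}$, and let $\mathcal B_2$ be the set of $a\in\Com$ for which $p(x_1,a)$ and $q(x_1,a)$ have a nonconstant common factor in $\Com[x_1]$. If $a\notin\mathcal B_1\cup\mathcal B_2$, then one of $p(x_1,a),q(x_1,a)$ has $x_1$-degree exactly $d_1$ (in particular $q(x_1,a)\not\equiv 0$, for otherwise $\gcd(p(x_1,a),q(x_1,a))=p(x_1,a)$ would be nonconstant, contradicting $a\notin\mathcal B_2$), and $p(x_1,a)/q(x_1,a)$ is already in lowest terms, so $\deg_{x_1}f(x_1,a)=\max\{\deg_{x_1}p(x_1,a),\deg_{x_1}q(x_1,a)\}=d_1$. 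Thus every exceptional value of $a$ — including any with $q(x_1,a)\equiv 0$ — lies in $\mathcal B_1\cup\mathcal B_2$, and it suffices to check $|\mathcal B_1\cup\mathcal B_2|<|A|$.

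To bound these sets, note first that since one of $p_{d_1},q_{d_1}$ is a nonzero polynomial of degree at most $d$, we get $|\mathcal B_1|\leq d$. For $\mathcal B_2$: an element $a\in\mathcal B_2$ is the $x_2$-coordinate of some point of $\{p=0\}\cap\{q=0\}$ in $\Com^2$, and since $(p,q)=1$ these curves share no component, so Theorem \ref{R13} gives $|\{p=0\}\cap\{q=0\}|\leq\deg p\cdot\deg q\leq d^2$, whence $|\mathcal B_2|\leq d^2$ (alternatively, $\mathcal B_2$ is contained in the finite zero set of the nonzero polynomial $\Res(p,q,x_1)\in\Com[x_2]$, which Corollary \ref{R15} bounds). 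Therefore $|\mathcal B_1\cup\mathcal B_2|\leq d+d^2\leq 2\deg f+(\deg f)^{\deg f}$, using $d^2\leq(\deg f)^{\deg f}$; so from $|A|>2\deg f+(\deg f)^{\deg f}$ one can choose $a\in A\setminus(\mathcal B_1\cup\mathcal B_2)$, and then $\deg_{x_1}f(x_1,a)=d_1=\deg_{x_1}f$, which together with the first paragraph yields the claimed identity. The identity for $\deg_{x_2}$ follows by the symmetric argument.

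There is no deep obstacle here; the one point requiring care is that substitution does not commute with passage to lowest terms. Although $f=p/q$ is reduced, $p(x_1,a)/q(x_1,a)$ usually is not, so $\deg_{x_1}$ can drop for two genuinely different reasons — the leading $x_1$-coefficients of $p$ and of $q$ both vanishing at $a$, and $p(x_1,a),q(x_1,a)$ acquiring a common factor after substitution — and both must be excluded; one must also make sure the degenerate values with $q(x_1,a)\equiv 0$ are absorbed into $\mathcal B_1\cup\mathcal B_2$. The only quantitative step is keeping the exceptional set below the threshold $2\deg f+(\deg f)^{\deg f}$, which is why I would use the Bezout bound of Theorem \ref{R13} (giving $|\mathcal B_2|\leq(\deg f)^2$) rather than the cruder estimate one reads off Corollary \ref{R15}.
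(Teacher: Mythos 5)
Your proof is correct and follows the same overall strategy as the paper's: write $f=p/q$ in lowest terms, note that $\deg_{x_1}f(x_1,a)$ can drop below $\deg_{x_1}f$ only when the relevant leading $x_1$-coefficients of $p$ and $q$ both vanish at $a$ or when $p(x_1,a)$ and $q(x_1,a)$ acquire a nonconstant common factor, bound both exceptional sets, and then pick a good $a\in A$. The one genuine difference is in bounding the common-factor set $\mathcal B_2$. You project the Bezout count from Theorem \ref{R13} to get $|\mathcal B_2|\leq(\deg f)^2$, whereas the paper appeals to Corollary \ref{R15}. That choice is not purely stylistic: Corollary \ref{R15} as stated gives the bound $(\deg f)^{2\deg f}$, but the paper's proof of this lemma quotes it for the smaller quantity $(\deg f)^{\deg f}$ (which is what the hypothesis $|A|>2\deg f+(\deg f)^{\deg f}$ is calibrated to). So the paper's argument as written has a small bookkeeping gap, and your Bezout estimate $(\deg f)^2$ sidesteps it, since $(\deg f)+(\deg f)^2\leq 2\deg f+(\deg f)^{\deg f}$ for all $\deg f\geq 1$. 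One minor technical remark: if $p$ or $q$ is constant then $\{p=0\}$ or $\{q=0\}$ is empty rather than a curve, so Theorem \ref{R13} does not apply literally; but in that degenerate case $\mathcal B_2=\emptyset$ trivially, so your bound still holds — it would be worth flagging that case distinction explicitly.
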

  \begin{proof}
  Write $f=\frac{p}{q}$ with $p,q\in\field[x_1,x_2]$ and $(p,q)=1$. Let $p_0,p_1,\ldots, p_{d_p},q_0,\ldots,\\ q_{d_q}\in\Com[x_2]$ be such that $p_{d_p}\cdot q_{d_q}\neq 0$, $p(x_1,x_2)=\sum_{i=0}^{d_p}p_i(x_2)x_1^i $ and $q(x_1,x_2)=\sum_{i=0}^{d_q}q_i(x_2)x_1^i$.  Define 
\begin{equation*}
A':=\{a\in A:\;(p(x_1,a),q(x_1,a))=1\}
\end{equation*}  
  On the one hand,
  \begin{equation}
  \label{E40}
  \deg_{x_1}f=\max\{d_p,d_q\}.
  \end{equation}
  On the other hand, for each $a\in A$
  \begin{align}
  \label{E0}
  \deg_{x_1}f(x_1,a)&\leq \max\{\deg_{x_1}p(x_1,a),\deg_{x_1}q(x_1,a)\}\nonumber\\
  &=\max\{j\in\Nat:\; p_j(a)\neq 0 \text{ or }q_j(a)\neq 0\}.
  \end{align}
   For  $a\in A'$, we have that $(p(x_1,a),q(x_1,a))=1$ so
  \begin{align}
  \label{E41}
  \deg_{x_1}f(x_1,a)&=\max\{\deg_{x_1}p(x_1,a),\deg_{x_1}q(x_1,a)\}\nonumber\\
  &=\max\{j\in\Nat:\; p_j(a)\neq 0 \text{ or }q_j(a)\neq 0\}.
  \end{align}
  Corollary \ref{R15} implies that $|A\setminus A'|\leq (\deg f)^{\deg f}$ so
  $|A'|>2\deg f\geq 2\deg_{x_2} f$. Hence we get that there is $a_0\in A'$ such that $p_{d_p}(a_0)\neq 0$ and $q_{d_q}(a_0)\neq 0$. Therefore (\ref{E41}) leads to
  \begin{equation}
  \label{E42}
  \deg_{x_1}f(x_1,a_0)=\max\{d_p,d_q\}.
  \end{equation}
  Thus 
  \begin{align*}
  \deg_{x_1}f&=\max\{d_p,d_q\}&\Big(\text{by (\ref{E40})}\Big)\\
  &=\deg_{x_1}f(x_1,a_0)&\Big(\text{by (\ref{E42})}\Big)\\
  &=\max_{a\in A}\deg_{x_1}f(x_1,a).&\Big(\text{by (\ref{E0}),(\ref{E41})}\Big)
  \end{align*}
  Likewise we get that 
   \begin{equation*}
  \deg_{x_2}f=\max_{a\in A}\deg_{x_2}f(a,x_2).\qedhere
  \end{equation*}
  \end{proof}
  \section{Dominating functions}
  Let $\field\in\{\Real,\Com\}$ and $g,f_1,f_2\in\field(x)$. We say that $g$ \emph{dominates} $\{f_1,f_2\}$ if the following two conditions are satisfied:
  \begin{enumerate}
  \item[$\bullet$] There exist $h_1,h_2\in\field(x)$ such that $f_1=g\circ h_1$ and $f_2=g\circ h_2$.
  \item[$\bullet$] For any $g',h'_1,h'_2\in\field(x)$ such that $f_1=g'\circ h'_1$ and $f_2=g'\circ h'_2$, we have that $\deg g'\leq \deg g$.
  \end{enumerate}  
  
  Note that for any pair $\{f_1,f_2\}$ in $\field(x)$, there exists  $g\in\field(x)$ which dominates it. Recall that $g,g'\in\field(x)$ are equivalent if there is $h\in\field(x)$ with $\deg h=1$ such that $g=g'\circ h$. 
  \begin{rem}
  \label{R31}
  Let $g,g'\in\field(x)$ be equivalent. For all $f_1,f_2,\in\field(x)$, we note that $g$ dominates $\{f_1,f_2\}$ if and only if $g'$ dominates $\{f_1,f_2\}$.
  \end{rem}
  The following lemma is based  on the idea of the proof of \cite[Thm. 18]{ER}.
  \begin{lem}
  \label{R32}
  Let $\field\in\{\Real,\Com\}$, $d\in\Nat$ and $f\in\field(x_1,x_2)$ be such that $\deg f\leq d$. 
  \begin{enumerate}
  \item[i)] There is $c_{4,d}>0$ with the following property. For any finite subset $A$ of $\field$ with $|A|\geq 2$, there are $g\in\field(x_1)$ and $B\subseteq A\times A$ such that  $|B|\geq c_{4,d}|A|^2$ and $g$ dominates $\{f(x_1,a), f(x_1,a')\}$ for all $(a,a')\in B$.
   \item[ii)] There is $c_{5,d}>0$ with the following property. For any finite subset $A$ of $\field$ with $|A|\geq 2$, there are $g\in\field(x_2)$ and $B\subseteq A\times A$ such that  $|B|\geq c_{5,d}|A|^2$ and $g$ dominates $\{f(a,x_2), f(a',x_2)\}$ for all $(a,a')\in B$.
  \end{enumerate}
  \end{lem}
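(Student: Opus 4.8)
The plan is to deduce part i) from the graph--colouring result Lemma \ref{R19}; part ii) is proven by the same argument with the roles of $x_1$ and $x_2$ interchanged, so I only describe i). Write $f=\frac pq$ with $p,q\in\field[x_1,x_2]$ and $(p,q)=1$. If $\deg_{x_1}f=0$ then $f\in\field(x_2)$, each $f(x_1,a)$ is a constant, any constant $g\in\field(x_1)$ dominates every pair $\{f(x_1,a),f(x_1,a')\}$, and one takes $B=A\times A$; so assume $\deg_{x_1}f\geq 1$. Then $f\notin\field(x_2)$, so $f(x_1,a)$ is a well--defined nonconstant element of $\field(x_1)$ for all $a$ outside a set of size $N_d=O_d(1)$ (it is undefined only when $(x_2-a)\mid q$, hence for at most $\deg q\le d$ values of $a$, and constant only for finitely many $a$ since $f(x_1,x_2)-f(x_1',x_2)$ is a nonzero rational function). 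Put $A':=\{a\in A:\ f(x_1,a)\in\field(x_1)\text{ is nonconstant}\}$, so $|A\setminus A'|\leq N_d$. For $|A|$ bounded in terms of $d$ the claim is immediate by taking $c_{4,d}$ small enough (with $B$ a single symmetric pair from $A'$), so assume $|A|>2N_d$; then $|A'|\geq|A|/2\geq 2$.

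The crucial observation is about the dominating functions. For each two--element subset $\{a,a'\}$ of $A'$ choose $g_{a,a'}\in\field(x_1)$ dominating $\{f(x_1,a),f(x_1,a')\}$; such a function exists. By definition of domination there is $h\in\field(x_1)$ with $f(x_1,a)=g_{a,a'}\circ h$, so $g_{a,a'}$ is a left factor of the nonconstant rational function $f(x_1,a)$; hence the equivalence class $[g_{a,a'}]$ is one of the equivalence classes of left factors of $f(x_1,a)$, and by Proposition \ref{R18} (together with $\deg f(x_1,a)\leq\deg_{x_1}f\leq d$) there are at most some $n_d=O_d(1)$ of those.

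Now form the complete graph $G=(V,E)$ on $V:=A'$ and colour the edge $\{a,a'\}$ by $[g_{a,a'}]$. Every vertex has degree $|A'|-1\geq|A'|/2$, and at most $n_d$ colours meet each vertex $a$, because all colours of edges at $a$ are left--factor classes of $f(x_1,a)$. Lemma \ref{R19}, applied with $c=\tfrac12$ and $n=n_d$, produces $c_2=c_{2,1/2,n_d}>0$ and a monochromatic subgraph $G'=(V',E')$ with $|E'|\geq c_2\binom{|A'|}{2}$. Let $g\in\field(x_1)$ represent the common colour of $E'$; for every $\{a,a'\}\in E'$ we have $g_{a,a'}\sim g$, so $g$ dominates $\{f(x_1,a),f(x_1,a')\}$ by Remark \ref{R31}. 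Taking $B:=\{(a,a'):\{a,a'\}\in E'\}\cup\{(a',a):\{a,a'\}\in E'\}$ then gives $g$ dominating $\{f(x_1,a),f(x_1,a')\}$ for all $(a,a')\in B$, with $|B|=2|E'|\geq c_2|A'|(|A'|-1)\geq\tfrac{c_2}{8}|A|^2$ for $|A|>2N_d$; so $c_{4,d}:=c_2/8$ (shrunk to cover the finitely many small cases) works, and ii) follows by the symmetric argument.

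I expect the main obstacle to be isolating the right combinatorial mechanism rather than any single hard computation: the content is not merely that a rational function of degree $\le d$ has $O_d(1)$ decompositions (Proposition \ref{R18}), but that a dominating function for a pair is forced to be a left factor of \emph{each} member, so that the colouring of the dense pair--graph has only $O_d(1)$ colours at every vertex, which is exactly the hypothesis needed to invoke Lemma \ref{R19}. A plain pigeonhole over the (a priori $\Theta(|A|)$-many) colours would only yield $|B|=\Omega_d(|A|)$, far short of the required $\Omega_d(|A|^2)$. The reductions discarding the $O_d(1)$ exceptional values of $a$ and the boundedly many small values of $|A|$ are routine and only affect the final constant.
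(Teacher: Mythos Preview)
Your proof is correct and follows essentially the same approach as the paper: build the complete graph on $A$ (you use the large subset $A'$), colour each edge $\{a,a'\}$ by the equivalence class of a dominating function for $\{f(x_1,a),f(x_1,a')\}$, invoke Proposition~\ref{R18} to see that at most $2^d$ colours meet any vertex, and then apply Lemma~\ref{R19} with $c=\tfrac12$ to extract a large monochromatic subgraph. The only difference is that you first discard the $O_d(1)$ values of $a$ for which $f(x_1,a)$ is undefined or constant (and absorb small $|A|$ into the constant), whereas the paper's proof works directly on all of $A$ and does not comment on these degenerate cases; your extra care is harmless and the resulting constants differ only by bounded factors.
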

  \begin{proof}
  Recall that $c_2=c_{2,c,n}$ is the number that satisfies Lemma \ref{R19} for the parameters $c$ and $n$. Take $c_{4,d}:=\frac{c_2\left(\frac{1}{2},2^d\right)}{2}$. Let $G=(V,E)$ be the graph with $V=A$ and $E=\left\{\{a,a'\}:\;a,a'\in A, \, a\neq a'\right\}$. For each $\{a,a'\}\in E$, let $g_{a,a'}$ be a function that dominates $\{f(x_1,a), f(x_1,a')\}$. For each equivalence class of $\field(x)$, take a representative and denote by $\{g_i\}_{i\in I}$ the set of representatives of all the classes. We colour $E$ as follows. For each $i\in I$, $E_i$ is the set of $\{a,a'\}\in E$ such that $g_{a,a'}$ is equivalent to $g_i$; denote the colouring by $E=\bigcup_{i\in I}E_i$. For any $a\in A=V$, $f(x_1,a)$ has at most $2^d$ nonequivalent decompositions by Proposition \ref{R18}. This means that at most $2^d$ colors meet in each vertex of $G$. Also note that $G$ is complete so the degree of each vertex is $|A|-1\geq \frac{|A|}{2}$. Thus the assumptions of Lemma \ref{R19} are satisfied by $G$, and therefore there is a monochromatic subgraph $G'=(V',E')$ of $G$ such that $|E'|\geq 2c_{4,d} |E|$. Since $G'$ is monochromatic, there is $g\in\field(x)$ such that $g$ is equivalent to $g_{a,a'}$ for all $\{a,a'\}\in E'$. Therefore $  B:=\{(a,a')\in A:\;\{a,a'\}\in E'\}$ satisfies that 
  \begin{equation*}
  |B|\geq 2|E'|\geq 4c_{4,d}|E|= 2c_{4,d}(|A|^2-|A|)\geq c_{4,d}|A|^2
  \end{equation*}
  and $g$ dominates $\{f(x_1,a), f(x_1,a')\}$ for all $(a,a')\in B$; this proves i). The proof of ii) is done in the same way.
 \end{proof}
 For $d\in\Nat$, set
 \begin{equation*}
 c_{6,d}:=2^d\cdot c_{3,d}=2^d\cdot (2d^2)^{\max\{2,d\}^{4d+5}}.
 \end{equation*}
 \begin{lem}
 \label{R33}
  Let $\field\in\{\Real,\Com\}$, $d\in\Nat$ and $f\in\field(x_1,x_2)$ be such that $\deg f\leq d$. There is $c_{7,d}>0$ with the following property.  For any finite subsets $A_1,A_2$ of $\field$ with $|A_1|,|A_2|>\max\left\{\frac{1}{c_{7,d}}, 2\right\}$, there are $h\in \field(x_1,x_2)$, $g\in\field(x)$,  $B_1\subseteq A_1\times A_1$ and $B_2\subseteq A_2\times A_2$ such that the following conditions are satisfied.
\begin{enumerate}
\item[i)]$f(x_1,x_2)=g(h(x_1,x_2))$.
\item[ii)]$|(A_1\times A_1)\setminus B_1|\leq c_{6,d}|A_1|$ and $|(A_2\times A_2)\setminus B_2|\leq c_{6,d}|A_2|$.
\item[iii)]$g(x_1)$ dominates $\{f(x_1,a_2),f(x_1,a'_2)\}$ for all $(a_2,a'_2)\in B_2$ and $g(x_2)$ dominates $\{f(a_1,x_2),f(a'_1,x_2)\}$ for all $(a_1,a'_1)\in B_1$.
\end{enumerate}  
    \end{lem}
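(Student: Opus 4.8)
The plan is to realize $g$ as a decomposition factor of $f$ of \emph{maximal possible degree}, and then to show that this maximality is precisely what forces $g$ to dominate all but linearly many of the one-variable slices of $f$. First I would fix, among all pairs $(g,h)$ with $g\in\field(x)$, $h\in\field(x_1,x_2)$ and $f(x_1,x_2)=g(h(x_1,x_2))$, one with $\deg g$ as large as possible; such a pair exists ($g=x$, $h=f$), and since degrees multiply under composition, computed over the ground field $\field(x_2)$, one has $\deg g\mid\deg_{x_1}f\leq d$, so the maximum $D:=\deg g$ is attained and $f=g\circ h$ gives part i). (If one prefers, Lemma \ref{R32} i), ii) together with a pigeonhole on the sets they produce and then Proposition \ref{R28} first exhibit a nontrivial decomposition of $f$; this is the Elekes--R\'{o}nyai step, and it is where bounds such as $c_{4,d}|A_2|\geq c_{3,d}$ get built into $c_{7,d}$, but it only serves as a convenient starting point for the choice of $g$.)

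Next I would establish two facts. (A) For all but $O_d(1)$ values $a_2\in A_2$ — the exceptions being roots of fixed polynomials of degree $O_d(1)$ arising from vanishing of leading coefficients and of denominators, cf.\ Lemma \ref{R30} — the slice $f(x_1,a_2)$ is a well-defined element of $\field(x_1)$ with $\deg_{x_1}f(x_1,a_2)=\deg_{x_1}f$ and, because $f=g\circ h$, satisfies $f(x_1,a_2)=g\bigl(h(x_1,a_2)\bigr)$; thus $f(x_1,a_2)$ \emph{factors through} $g$. Call such $a_2$ admissible, and similarly for admissible $a_1\in A_1$. (B) For every equivalence class $\gamma$ of rational functions in $\field(x)$ with $\deg\gamma>D$, the set
\begin{equation*}
S^{(2)}_\gamma:=\bigl\{a_2\in A_2:\ f(x_1,a_2)\ \text{factors through a member of}\ \gamma\bigr\}
\end{equation*}
satisfies $|S^{(2)}_\gamma|<c_{3,d}$, and the mirror statement holds for the $x_2$-slices. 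Indeed, were $|S^{(2)}_\gamma|\geq c_{3,d}$, fixing $g_\gamma\in\gamma$ and rewriting, for each $a_2\in S^{(2)}_\gamma$, a decomposition of $f(x_1,a_2)$ through a member of $\gamma$ as $f(x_1,a_2)=g_\gamma\circ h_{a_2}$ (absorbing a degree-$1$ factor), Proposition \ref{R28} i) applied to $g_\gamma$ and $S^{(2)}_\gamma$ would give $h\in\field(x_1,x_2)$ with $f=g_\gamma\circ h$, contradicting maximality of $D$ since $\deg g_\gamma>D$. (For the $x_2$-slices one uses the mirror of Proposition \ref{R28} i), obtained by interchanging $x_1,x_2$ in its proof.)

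The conclusion now falls out. For admissible $a_2,a_2'\in A_2$, both slices factor through $g$ by (A), so $g$ dominates $\{f(x_1,a_2),f(x_1,a_2')\}$ \emph{unless} there is $g'\in\field(x)$ with $\deg g'>D$ through which both slices factor — equivalently, by (B), unless $a_2$ and $a_2'$ both lie in $S^{(2)}_\gamma$ for a single class $\gamma$ with $\deg\gamma>D$. By Proposition \ref{R18}, $f(x_1,a_2)$ has at most $2^{\deg_{x_1}f(x_1,a_2)}\leq 2^d$ nonequivalent decompositions, so at most $2^d$ such classes are relevant to a fixed admissible $a_2$; hence that $a_2$ has fewer than $2^d c_{3,d}=c_{6,d}$ bad partners among the admissible elements. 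Letting $B_2$ be the set of pairs $(a_2,a_2')$ with both coordinates admissible and $g$ dominating $\{f(x_1,a_2),f(x_1,a_2')\}$, one gets $|(A_2\times A_2)\setminus B_2|\leq c_{6,d}|A_2|$, the $O_d(1)$ non-admissible values contributing only $O_d(|A_2|)$ which is absorbed by choosing $c_{7,d}$ small enough. Running the same argument with $x_1$ and $x_2$ interchanged (the maximal $D$, and hence the function $g$, is symmetric in the two variables) produces $B_1\subseteq A_1\times A_1$ with the analogous bound and with $g$ dominating $\{f(a_1,x_2),f(a_1',x_2)\}$ on $B_1$; so i), ii), iii) all hold for $g,h,B_1,B_2$, and $c_{7,d}$ is taken to meet the finitely many accumulated constraints.

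I expect the crux to be securing a \emph{linear}, rather than merely positive-proportion, bound on the exceptional pairs: the graph-colouring/pigeonhole argument underlying Lemma \ref{R32} gives only a positive fraction, and the improvement comes exactly from passing to the maximal decomposition factor $g$ and from the quantitative estimate $|S_\gamma|<c_{3,d}$, which in turn rests entirely on the lifting Proposition \ref{R28}. A secondary, purely bookkeeping, difficulty is handling the $O_d(1)$ non-admissible slices carefully enough that the final bound is exactly $c_{6,d}|A_i|$ and not a larger constant multiple.
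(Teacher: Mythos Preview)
Your approach is correct and is essentially the paper's own argument, reorganized around the single observation that one may take $g$ to be a left factor of $f$ of \emph{maximal} degree. The paper instead reaches this $g$ indirectly: it first invokes Lemma~\ref{R32} to produce $g_1,g_2$ dominating positive-proportion sets, sets $g$ equal to the one of larger degree, uses that positive proportion together with Proposition~\ref{R28} to exhibit $f=g\circ h$, and then runs exactly your counting (Proposition~\ref{R18} for $\le 2^d$ relevant classes, Proposition~\ref{R28} to lift and derive a contradiction) to obtain the per-vertex bound $<2^d c_{3,d}=c_{6,d}$. Your ``maximality of $D$'' contradiction and the paper's ``$g$ would fail to dominate on a nonempty $B_1$ (resp.\ $B'_2$)'' contradiction are the same statement viewed from two sides.

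One simplification you should make: step (A) is unnecessary for the domination argument. Once $f=g\circ h$ is fixed, \emph{every} well-defined slice $f(x_1,a_2)=g(h(x_1,a_2))$ already factors through $g$; you do not need $\deg_{x_1}f(x_1,a_2)=\deg_{x_1}f$ for that, nor for the bound $2^{\deg_{x_1} f(x_1,a_2)}\le 2^d$ from Proposition~\ref{R18}. If you simply define $B_2$ as the set of pairs on which $g$ dominates (no admissibility clause), your count gives $|(A_2\times A_2)\setminus B_2|<c_{6,d}|A_2|$ on the nose, and the ``secondary bookkeeping difficulty'' you flagged disappears. (Your proposed fix, absorbing the extra $O_d(|A_2|)$ by shrinking $c_{7,d}$, does not actually work, since $c_{7,d}$ enters only the size threshold on $|A_i|$ and not the constant in ii).)
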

    \begin{proof}
    Let $c_{4,d}$ and $c_{5,d}$ be as  in Lemma \ref{R32}. Take $c_{7,d}:=\frac{\min\{c_{4,d},c_{5,d}\}}{c_{3,d}}$. From Lemma \ref{R32}, there are $g_1\in\field(x_1)$ (resp. $g_2\in\field(x_2)$) and $B'_2\subseteq A_2\times A_2$ (resp.  $B'_1\subseteq A_1\times A_1$) such that  $|B'_2|\geq c_{4,d}|A_2|^2$ (resp. $|B'_1|\geq c_{5,d}|A_1|^2$) and $g_1$ (resp. $g_2$) dominates $\{f(x_1,a), f(x_1,a')\}$ (resp. $\{f(a,x_2), f(a',x_2)\}$) for all $(a,a')\in B'_2$ (resp. $(a,a')\in B'_1$). Without loss of generality, assume that $\deg g_2\geq \deg g_1$ and write $g:=g_2$.
    
    Let $B_1$ be the subset of elements  $\ab_1=(a_1,a'_1)$ of $ A_1\times A_1$ such that $g(x_2)$ dominates $\{f(a_1,x_2),f(a'_1,x_2)\}$. Since $|B'_1|\geq c_{5,d}|A_1|^2$ and $B'_1\subseteq B_1$, there are  $a_1\in A_1$ and $A'_1$ a subset of $A_1$ such that $|A'_1|\geq c_{5,d}|A_1|$ and $(a_1,a'_1)\in B_1$ for all $a'_1\in A'_1$. For all $a'_1\in A'_1$, we have that $(a_1,a'_1)\in B_1$ and therefore there are $h_{a'_1},h_{(a_1,a'_1)}\in\field(x_2)$ such that $f(a_1,x_2)=g(h_{(a_1,a'_1)}(x_2))$ and $f(a'_1,x_2)=g(h_{a'_1}(x_2))$. Since $|A'_1|\geq c_{5,d}|A_1|>c_{3,d}$ and $f(a'_1,x_2)=g(h_{a'_1}(x_2))$ for all $a_1'\in A'_1$, Proposition \ref{R28} i) implies that there is $h\in\field(x_1,x_2)$ such that
\begin{equation}  
\label{E43}  
     f(x_1,x_2)=g(h(x_1,x_2)).
\end{equation}
 For each $a\in A_1$, write $A_{1,a}:=\{(a,a')\in A_1\times A_1:\;(a,a')\in B_1\}$. We claim that for each $a\in A_1$,
    \begin{equation}
    \label{E44}
    |A_1\setminus A_{1,a}|\leq c_{6,d}.
    \end{equation}
    To show (\ref{E44}), we assume that it is false and we get a contradiction. Insomuch as $f(x_1,x_2)=g(h(x_1,x_2))$, we have that $f(a',x_2)=g(h(a',x_2))$ for all $a'\in A_1$. Thus, for all $a'\in A_1\setminus A_{1,a}$, there are $g_{a'}\in\field(x)$ such that $f(a,x_2)$ and $f(a',x_2)$ factorizes through $g_{a'}$ with $\deg g_{a'}>\deg g$; hence there are $h_{a'},h_{(a,a')}\in\field(x_2)$ such that $f(a,x_2)=g_{a'}(h_{(a,a')}(x_2))$ and $f(a',x_2)=g_{a'}(h_{a'}(x_2))$. From Proposition \ref{R18}, $f(a,x_2)$ has at most $2^d$ nonequivalent decompositions. Hence, if (\ref{E44}) is false, there is $A'\subseteq A_1\setminus A_{1,a}$ such that $|A'|>\frac{c_{6,d}}{2^d}=c_{3,d}$ and the decompositions $f(a,x_2)=g_{a'}(h_{(a,a')}(x_2))$ are linearly equivalent for all $a'\in A'$; thus we may assume that all of them are equal. Then set  $g':=g_{a'}$ for any $a'\in A'$ and note that
\begin{equation}
\label{E45}
\deg g'>\deg g.
\end{equation}    
     Since $f(a,x_2)=g'(h_{(a,a')}(x_2))$ for all $a'\in A'$ and $|A'|>c_{3,d}$, Proposition \ref{R28} i) implies that there is $h'\in\field(x_1,x_2)$ such that $f(x_1,x_2)=g'(h'(x_1,x_2))$. Now, for any $(a',a'')\in B_1$, we have that  $f(a',x_2)=g'(h'(a',x_2))$ and $f(a'',x_2)=g'(h'(a'',x_2))$; however, (\ref{E45}) implies that $g$ does not dominate $\{f(a',x_2),f(a'',x_2)\}$ which is impossible by the definition of $B_1$. This proves (\ref{E44}).  From (\ref{E44}), we conclude that 
     \begin{equation}
     \label{E46}
     |(A_1\times A_1)\setminus B_1|\leq \sum_{a\in A_1}|A_1\setminus A_{1,a}|\leq c_{6,d}|A_1|.
     \end{equation}
     
     Let $B_2$ be the subset of elements  $\ab_2=(a_2,a'_2)$ of $ A_2\times A_2$ such that $g(x_1)$ dominates $\{f(x_1,a_2),f(x_1,a'_2)\}$.    For each $a\in A_2$, write $A_{2,a}=\{(a,a')\in A_2\times A_2:\;(a,a')\in B_2\}$. We claim that
    \begin{equation}
    \label{E47}
    |A_2\setminus A_{2,a}|\leq c_{6,d}.
    \end{equation}
    We show (\ref{E47}) assuming it is false and getting a contradiction. Insomuch as $f(x_1,x_2)\\=g(h(x_1,x_2))$ by (\ref{E43}), we have that $f(x_1,a')=g(h(x_1,a'))$ for all $a'\in A_2$. Thus, for all $a'\in A_2\setminus A_{2,a}$, there are $g_{a'}\in\field(x)$ such that $f(x_1,a)$  and $f(x_1,a')$ factorizes through $g_{a'}$ with $\deg g_{a'}>\deg g$; hence there exist $h_{a'},h_{(a,a')}\in\field(x_1)$ such that $f(x_1,a)=g_{a'}(h_{(a,a')}(x_1))$ and $f(x_1,a')=g_{a'}(h_{a'}(x_1))$. From Proposition \ref{R18}, $f(x_1,a)$ has at most $2^d$ nonequivalent decompositions. Hence, if (\ref{E47}) is false, there is $A'\subseteq A_2\setminus A_{2,a}$ such that $|A'|>\frac{c_{6,d}}{2^d}=c_{3,d}$ and the decompositions $f(x_1,a)=g_{a'}(h_{(a,a')}(x_1))$ are linearly equivalent for all $a'\in A'$; thus all the functions $\{g_{a'}\}_{a'\in A'}$   are equivalent, and, by Remark \ref{R31}, we may assume that all of them are equal to a rational function $g'$; in particular
\begin{equation}
\label{E48}
\deg g'>\deg g\geq \deg g_1.
\end{equation}    
     Since $f(x_1,a)=g(h_{(a,a')}(x_1))$ for all $a'\in A'$ and $|A'|>c_{3,d}$, Proposition \ref{R28} i) implies that there is $h'\in\field(x_1,x_2)$ such that $f(x_1,x_2)=g'(h'(x_1,x_2))$. Now, for any $(a',a'')\in B'_2$, we have that  $f(x_1,a')=g'(h'(x_1,a'))$ and $f(x_1,a'')=g'(h'(x_1,a''))$; however, (\ref{E48}) implies that $g_1$ does not dominate $\{f(x_1,a'),f(x_1,a'')\}$ which is impossible by the election of $B'_2$. This proves (\ref{E47}).  From (\ref{E47}), we conclude that 
     \begin{equation}
     \label{E49}
     |(A_2\times A_2)\setminus B_2|\leq \sum_{a\in A_2}|A_2\setminus A_{2,a}|\leq c_{6,d}|A_2|=c_{6,d}|A_1|.
     \end{equation}
     Finally, (\ref{E43}) yields  i),  (\ref{E46}) and (\ref{E49}) give  ii)  and the definitions of $B_1$ and $B_2$ imply  iii). 
    \end{proof}
    \section{Proof of Theorem \ref{R3}}
    In this section we show Theorem \ref{R3}. We start with a technical lemma. 
     \begin{lem}
    \label{R34}
    Let $\field\in\{\Real, \Com\}$, $d\in\Nat$, $f\in\field(x_1,x_2)$ be such that $\deg f\leq d$, $A$ be a  finite subset of $\field$ and $B$ be a subset of $A\times A$ such that $|B|\geq c_{6,d}^2$.
    \begin{enumerate}
    \item[i)]Assume that for all $\ab=(a_1,a_2),\ab'=(a'_1,a'_2)\in B$ distinct, there are $h^{(\ab)}_{\ab,\ab'}, h^{(\ab')}_{\ab,\ab'},l^{(1)}_{\ab,\ab'}, l^{(2)}_{\ab,\ab'}\in\field(x)$ such that $\deg h^{(\ab)}_{\ab,\ab'}, \deg h^{(\ab')}_{\ab,\ab'}\leq 1$ and  the rational functions $f(y_1,a_1),f(y_1,a'_1)\in\field(y_1)$ and $f(y_2,a_2),f(y_2,a'_2)\in\field(y_2)$ are decomposed as follows
    \begin{align*}
    f(y_1,a_1)&=h^{(\ab)}_{\ab,\ab'}\left(l^{(1)}_{\ab,\ab'}(y_1)\right)& f(y_2,a_2)&=h^{(\ab)}_{\ab,\ab'}\left(l^{(2)}_{\ab,\ab'}(y_2)\right)\\
    f(y_1,a'_1)&=h^{(\ab')}_{\ab,\ab'}\left(l^{(1)}_{\ab,\ab'}(y_1)\right)& f(y_2,a'_2)&=h^{(\ab')}_{\ab,\ab'}\left(l^{(2)}_{\ab,\ab'}(y_2)\right).
    \end{align*}
    Then there are $l\in\field(x)$ and $h\in\field(x_1,x_2)$ such that $\deg_{x_1}h\leq 1$ and $f(x_1,x_2)=h(l(x_1),x_2)$.
     \item[ii)]Assume that for all $\ab=(a_1,a_2),\ab'=(a'_1,a'_2)\in B$ distinct, there are $h^{(\ab)}_{\ab,\ab'}, h^{(\ab')}_{\ab,\ab'},l^{(1)}_{\ab,\ab'}, l^{(2)}_{\ab,\ab'}\in\field(x)$ such that $\deg h^{(\ab)}_{\ab,\ab'}, \deg h^{(\ab')}_{\ab,\ab'}\leq 1$ and  the rational functions $f(y_1,a_1),f(y_1,a'_1)\in\field(y_1)$ and $f(y_2,a_2),f(y_2,a'_2)\in\field(y_2)$ are decomposed as follows
    \begin{align*}
    f(a_1,y_1)&=h^{(\ab)}_{\ab,\ab'}\left(l^{(1)}_{\ab,\ab'}(y_1)\right)& f(a_2,y_2)&=h^{(\ab)}_{\ab,\ab'}\left(l^{(2)}_{\ab,\ab'}(y_2)\right)\\
    f(a'_1,y_1)&=h^{(\ab')}_{\ab,\ab'}\left(l^{(1)}_{\ab,\ab'}(y_1)\right)& f(a'_2,y_2)&=h^{(\ab')}_{\ab,\ab'}\left(l^{(2)}_{\ab,\ab'}(y_2)\right).
    \end{align*}
    Then there are $l\in\field(x)$ and $h\in\field(x_1,x_2)$ such that $\deg_{x_2}h\leq 1$ and $f(x_1,x_2)=h(x_1,l(x_2))$.
    \end{enumerate}
    \end{lem}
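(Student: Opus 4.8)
The plan is to prove part i) in detail; part ii) is entirely symmetric, with the roles of $x_1$ and $x_2$ interchanged and with Proposition \ref{R28} and Lemma \ref{R30} used in their variable-swapped forms. First I would dispose of the case in which $f$ does not depend on $x_1$: then $f\in\field(x_2)$, and taking $l(x_1):=x_1$ and $h(x_1,x_2):=f$ gives $f(x_1,x_2)=h(l(x_1),x_2)$ with $\deg_{x_1}h=0$. So assume $\deg_{x_1}f\geq 1$. Next, let $Z:=\{a\in\field:\;f(x_1,a)\text{ is constant}\}$; writing $f=\frac pq$ with $(p,q)=1$ and considering the nonzero polynomial $p(x_1,x_2)q(x_1',x_2)-p(x_1',x_2)q(x_1,x_2)\in\field[x_1,x_1',x_2]$, whose $x_2$-degree is at most $2d$, one sees that $|Z|\leq 2d$.

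The key observation is that the bound $\deg h^{(\ab)}_{\ab,\ab'}\leq 1$ forces the subfield $\field(f(y,a))\subseteq\field(y)$ to be the same for many $a$. Indeed, for distinct $\ab=(a_1,a_2),\ab'=(a'_1,a'_2)\in B$ with $f(x_1,a_1)$ and $f(x_1,a'_1)$ both nonconstant, the relation $f(y_1,a_1)=h^{(\ab)}_{\ab,\ab'}(l^{(1)}_{\ab,\ab'}(y_1))$ has nonconstant left-hand side, which forces $\deg h^{(\ab)}_{\ab,\ab'}=1$; hence $h^{(\ab)}_{\ab,\ab'}$ is a M\"obius map and $\field(f(y,a_1))=\field(l^{(1)}_{\ab,\ab'}(y))$. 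Likewise $\field(f(y,a'_1))=\field(l^{(1)}_{\ab,\ab'}(y))$, so $\field(f(y,a_1))=\field(f(y,a'_1))$; the analogous statement for second coordinates follows from the $l^{(2)}_{\ab,\ab'}$-equations. Now, since $|\pi_1(B)|\cdot|\pi_2(B)|\geq|B|\geq c_{6,d}^2$, one of the projections, say $\pi_1(B)$ after relabeling, has at least $c_{6,d}$ elements, so $J:=\pi_1(B)\setminus Z$ satisfies $|J|\geq c_{6,d}-2d\geq c_{3,d}$. By the key observation all fields $\field(f(y,a))$, $a\in J$, coincide in one common field $L$ with $\field\subsetneq L\subseteq\field(y)$, and Lüroth's Theorem (Theorem \ref{R6}) gives $\lambda\in\field(y)$ with $L=\field(\lambda(y))$. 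For each $a\in J$ we may write $f(y,a)=r_a(\lambda(y))$ with $r_a\in\field(x)$, and a degree count (from $\field(f(y,a))=\field(\lambda(y))$ together with $f(y,a)$ nonconstant) forces $\deg r_a=1$.

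It now suffices to apply Proposition \ref{R28} ii) with $g:=\lambda$ to the set $J$ (legitimate as $|J|\geq c_{3,d}$) to obtain $h\in\field(x_1,x_2)$ with $f(x_1,x_2)=h(\lambda(x_1),x_2)$. To get $\deg_{x_1}h\leq 1$: for $a\in J$, comparing $f(x_1,a)=h(\lambda(x_1),a)$ with $f(x_1,a)=r_a(\lambda(x_1))$ and using that $\lambda(x_1)$ is transcendental over $\field$ yields $h(x_1,a)=r_a(x_1)$, so $\deg_{x_1}h(x_1,a)=1$ for every $a\in J$. Since $\deg_{x_1}h\leq\deg_{x_1}f\leq d$ and $\deg_{x_2}h=\deg_{x_2}f\leq d$, we have $\deg h\leq 2d$, so $|J|\geq c_{3,d}>2\deg h+(\deg h)^{\deg h}$ and Lemma \ref{R30} gives $\deg_{x_1}h=\max_{a\in J}\deg_{x_1}h(x_1,a)=1$, completing part i).

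The main obstacle is the middle step: extracting a single inner function $\lambda$ that linearizes all the slices $f(x_1,a)$ simultaneously. The cleanest way, as above, is to argue at the level of the subfields $\field(f(y,a))$, use the degree-$\leq 1$ hypothesis to see that they all coincide, and only then invoke Lüroth; the point that makes this run smoothly is that a Lüroth generator $\lambda$ of the common field automatically has the property that each $r_a$ is of degree exactly $1$, which is precisely what is needed to feed Proposition \ref{R28} and to force $\deg_{x_1}h\leq 1$. The remaining work is bookkeeping: bounding the exceptional set $Z$, checking the (very generous) size inequalities against the explicit constants $c_{3,d}$ and $c_{6,d}$, and isolating the degenerate case where $f$ is independent of $x_1$.
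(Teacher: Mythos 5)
Your proof is correct, and it takes a genuinely different route from the paper's at the key middle step. The paper fixes a base pair $\ab\in B$ and invokes Proposition \ref{R18} (the bound $2^d$ on nonequivalent decompositions of a univariate rational function) to extract a subset $B'\subseteq B$ with $|B'|\geq |B|/2^{2d}$ on which all the decompositions $f(y_1,a_1)=h^{(\ab)}_{\ab,\ab'}(l^{(1)}_{\ab,\ab'}(y_1))$ are linearly equivalent; after normalizing, the inner function $l^{(1)}_{\ab,\ab'}=l_1$ becomes independent of $\ab'$, and then a projection of $B'$ of size $\geq |B'|^{1/2}\geq c_{3,d}$ feeds Proposition \ref{R28} ii). You instead observe that the degree-$\leq 1$ hypothesis on the outer functions forces each $h^{(\ab)}_{\ab,\ab'}$ to be a M\"obius map (once the relevant slice is nonconstant), so $\field(f(y,a_1))=\field(l^{(1)}_{\ab,\ab'}(y))=\field(f(y,a'_1))$; hence the subfields $\field(f(y,a))$ coincide across a large projection $J$, L\"uroth supplies the canonical inner function $\lambda$, and the fact that each $r_a$ with $f(y,a)=r_a(\lambda(y))$ automatically has degree $1$ is exactly what you need. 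This avoids Proposition \ref{R18} entirely in this lemma, replaces the monochromatic-subset bookkeeping with the cleaner subfield argument, and the only loss is the exceptional set $Z$ of constant slices (of size $\leq 2d$), which is more than absorbed since $c_{6,d}=2^d c_{3,d}$. The two approaches converge at the same point: both apply Proposition \ref{R28} ii) to a set of size $\geq c_{3,d}$ to lift to $h$ with $f(x_1,x_2)=h(\lambda(x_1),x_2)$, and both finish with Lemma \ref{R30} plus a degree count to force $\deg_{x_1}h\leq 1$. Your explicit treatment of the degenerate constant-slice set $Z$ at the outset is a small piece of bookkeeping the paper handles implicitly (only the case $\deg l_1=0$ is noted at the end); it is a harmless difference, and both arguments share the mild informality of not spelling out exactly which specializations $x_2=a$ are legitimate, which could be patched in either version by discarding a bounded number of bad $a$.
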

   \begin{proof}
   The proofs of i) and ii) are symmetric; thus it is suffices to show i).  Fix $\ab=(a_1,a_2)\in B$. First we claim that  there is $B'\subseteq B$ such that for all $\ab',\ab''\in B'$,  the decompositions
   \begin{align}
   \label{E50}
    f(y_1,a_1)&=h^{(\ab)}_{\ab,\ab'}\left(l^{(1)}_{\ab,\ab'}(y_1)\right)& f(y_1,a_1)=h^{(\ab)}_{\ab,\ab''}\left(l^{(1)}_{\ab,\ab''}(y_1)\right)    \end{align} 
    are linearly equivalent, the decompositions 
     \begin{align}
   \label{E51}
    f(y_2,a_2)&=h^{(\ab)}_{\ab,\ab'}\left(l^{(2)}_{\ab,\ab'}(y_2)\right)& f(y_2,a_2)=h^{(\ab)}_{\ab,\ab''}\left(l^{(2)}_{\ab,\ab''}(y_2)\right)   , \end{align} 
    are linearly equivalent and 
\begin{equation}
\label{E52}
|B'|\geq \frac{1}{2^{2d}}|B|.
\end{equation}    
    From Proposition \ref{R18}, $f(y_1,a_1)$ has at most $2^{\deg_{y_1}f(y_1,a)}\leq 2^d$ nonequivalent decompositions so there is $B''\subseteq B$ such that for all $\ab',\ab''\in B''$, the decompositions $f(y_1,a_1)=h^{(\ab)}_{\ab,\ab'}\left(l^{(1)}_{\ab,\ab'}(y_1)\right)=h^{(\ab)}_{\ab,\ab''}\left(l^{(1)}_{\ab,\ab''}(y_1)\right)$ are equivalent and $|B''|\geq \frac{1}{2^d}|B|$. In the same way $f(y_2,a_2)$ has at most $2^d$ nonequivalent decompositions there is $B'\subseteq B''$ such that for all $\ab',\ab''\in B'$, the decompositions $f(y_2,a_2)=h^{(\ab)}_{\ab,\ab'}\left(l^{(2)}_{\ab,\ab'}(y_2)\right)=h^{(\ab)}_{\ab,\ab''}\left(l^{(2)}_{\ab,\ab''}(y_2)\right)$ are equivalent and $|B'|\geq \frac{1}{2^d}|B''|$; thus $B'$ satisfies (\ref{E50}), (\ref{E51}) and (\ref{E52}).
    
    Let $B'$ be a subset of $B$ satisfying (\ref{E50}), (\ref{E51}) and (\ref{E52}). From (\ref{E50}), we have that for any $\ab',\ab''\in B'$, $h^{(\ab)}_{\ab,\ab'}=h^{(\ab)}_{\ab,\ab''}\circ h$ for some $h$ with degree $1$; therefore $l^{(1)}_{\ab,\ab'}=h^{-1}\circ l^{(1)}_{\ab,\ab''}$ and $\deg h=\deg h^{-1}=1$; thus we can assume that all the $l^{(1)}_{\ab,\ab'}$ are equal; set $l_1:=l^{(1)}_{\ab,\ab'}$ for any $\ab'\in B'$. Likewise write $l_2:=l^{(2)}_{\ab,\ab'}$ for any $\ab'\in B'$. Let $\pi_1:A\times A\rightarrow A$ (resp. $\pi_2:A\times A\rightarrow A$) be the projection to the first (resp. second) entry. Then
    \begin{equation}
    \label{E53}
    |B'|\leq |\pi_1(B')||\pi_2(B')|;
        \end{equation}
        assume without loss of generality that $|\pi_1(B')|\geq |\pi_2(B')|$. Let $B^*$ be a subset of $B'$ such that each element in $\pi_1(B')$ has exactly one preimage in $B^*$. Then (\ref{E53}) implies that
      \begin{equation}
      \label{E54}
      |B^*|=|\pi_1(B')|\geq |B'|^{\frac{1}{2}}
      \end{equation}
        Moreover, (\ref{E52}) and (\ref{E54}) lead to
        \begin{equation}
        \label{E55}
        |B^*|\geq |B'|^{\frac{1}{2}}\geq \left(\frac{1}{2^{2d}}|B|\right)^{\frac{1}{2}}\geq \left(\frac{c_{6,d}^2}{2^{2d}}\right)^{\frac{1}{2}}=c_{3,d}.
        \end{equation}
      On the other hand, for all $\ab'=(a'_1,a'_2)\in B^*$ we have that 
      \begin{equation}
      \label{E56}
      f(y_1,a'_1)=h^{(\ab)}_{\ab,\ab'}\left(l_1(y_1)\right).
\end{equation} 
From (\ref{E55}) and (\ref{E56}), we can apply Proposition \ref{R28} ii) to $f$ and get the existence of $h\in\field(x_1,x_2)$ such that
\begin{equation*}
f(x_1,x_2)=h(l_1(x_1),x_2).
\end{equation*}  
It remains to show that $\deg_{x_1}h\leq 1$. If $\deg l_1=0$, then $f(x_1,x_2)$ does not depend on $x_1$, and thereby we can take $h(x_1,x_2)$ with $\deg_{x_1}h=0$. Thus from now on we assume that 
\begin{equation}
\label{E57}
\deg l_1>0.
\end{equation}    
 Since $|\pi_1(B')|=|B^*|\geq c_{3,d}$ by (\ref{E55}), Lemma \ref{R30} yields that 
 \begin{equation}
 \label{E58}
 \deg_{x_1}h(x_1,x_2)=\max_{a'_1\in\pi_1(B')}\deg_{x_1}h(x_1,a'_1).
 \end{equation}
 Hence
 \begin{align*}
 \deg_{x_1}h\cdot \deg l_1&=\deg_{x_1}h(l_1(x_1),x_2)\\
 &=\max_{a'_1\in\pi_1(B')}\deg_{x_1}h(l_1(x_1),a'_1)&\Big(\text{by (\ref{E58})}\Big)\\
  &=\max_{a'_1\in\pi_1(B')}\deg_{x_1}f(x_1,a'_1)\\
  &=\max_{\ab'\in B'}\deg_{x_1}h^{(\ab)}_{\ab,\ab'}\left(l_1(x_1)\right)&\Big(\text{by (\ref{E56})}\Big)\\
   &=\max_{\ab'\in B'}\deg h^{(\ab)}_{\ab,\ab'}\cdot \deg l_1\\
    &\leq \max_{\ab'\in B'} \deg l_1 &\Big(\text{since $\deg h^{(\ab)}_{\ab,\ab'}\leq 1$}\Big)\\
    &=\deg l_1,
 \end{align*}
  and finally (\ref{E57}) leads to $\deg_{x_1}h\leq 1$.
   \end{proof}
    The main step in the proof of Theorem \ref{R3} is the following lemma. 
    \begin{lem}
    \label{R35}
    Let $\field\in\{\Real, \Com\}$, $d\in\Nat$ and $f\in\field(x_1,x_2)$ be such that $\deg f\leq d$. For any subsets $A_1,A_2$  of $\field$ such that $|A_1|=|A_2|$ and $A_1\times A_2\subseteq \Dom f$, one of the following statement holds.
    \begin{enumerate}
    \item[i)]$|f(A_1,A_2)|=\Omega_d\left(|A_1|^{\frac{4}{3}}\right)$.
    \item[ii)]There are $g,l_1\in\field(x)$ and $h\in\field(x_1,x_2)$ such that $\deg_{x_1}h\leq 1$ and $f(x_1,x_2)=g(h(l_1(x_1),x_2))$.
    \item[iii)]There are $g,l_2\in\field(x)$ and $h\in\field(x_1,x_2)$ such that $\deg_{x_2}h\leq 1$ and $f(x_1,x_2)=g(h(x_1,l_2(x_2)))$.
    \end{enumerate}
    \end{lem}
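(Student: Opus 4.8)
The plan is to follow the strategy announced in the introduction: first reduce to a situation where, on large subsets $B_1\subseteq A_1\times A_1$ and $B_2\subseteq A_2\times A_2$, the curves $C^{(1)}_{\fu,\ab_2}$ (for $\ab_2\in B_2$) and $C^{(2)}_{\fu,\ab_1}$ (for $\ab_1\in B_1$) are well controlled, and then split into two cases according to whether some irreducible curve is shared by many of these curves. To set this up, I would apply Lemma \ref{R33} to $f$, $A_1$, $A_2$ (assuming $|A_1|=|A_2|$ is larger than the threshold $\max\{1/c_{7,d},2\}$; if it is not, then $|f(A_1,A_2)|$ is bounded by a constant and, after shrinking $A_1,A_2$ if necessary, we are in a degenerate case that forces ii) or iii) trivially). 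This yields $h\in\field(x_1,x_2)$, $g\in\field(x)$, and $B_1\subseteq A_1\times A_1$, $B_2\subseteq A_2\times A_2$ with $|(A_i\times A_i)\setminus B_i|\leq c_{6,d}|A_i|$ such that $f=g(h(x_1,x_2))$ (write $\fu:=h$), and such that $g(x_1)$ dominates $\{f(x_1,a_2),f(x_1,a'_2)\}$ for all $(a_2,a'_2)\in B_2$ and symmetrically for $B_1$. Since $f=g\circ\fu$ and $g$ has maximal degree among common factorizations along these slices, for $(a_2,a'_2)\in B_2$ the pair $\{\fu(x_1,a_2),\fu(x_1,a'_2)\}$ admits no common factorization through a $g'$ of degree $>\deg g$; equivalently, the defining polynomial of $C^{(1)}_{\fu,\ab_2}$ cannot be ``further collapsed.''

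Next I would dichotomize. \textbf{Case A: no irreducible curve lies in $m$ or more of the curves $\{C^{(1)}_{\fu,\ab_2}\}_{\ab_2\in B_2}$ and no irreducible curve lies in $m$ or more of $\{C^{(2)}_{\fu,\ab_1}\}_{\ab_1\in B_1}$}, for a suitable constant $m=m(d)$ (something like $c_{6,d}$ will do, matching the error terms from Lemma \ref{R33}). Then the hypotheses of Theorem \ref{R23} are met (with $f$ replaced by $\fu$, whose degree is $\leq d$), so $I(A_1\times A_1,\Cc^{(1)}_{\fu,A_2\times A_2})=O_{m,d}(|A_1|^{8/3})$. On the other hand, by the incidence–duality for level sets, $\sum_{t}\binom{n_t}{2}$ where $n_t$ counts pairs $((a_1,a_2),(a'_1,a'_2))$ with $\fu(a_1,a_2)=\fu(a'_1,a'_2)=t$ is essentially this incidence count; a Cauchy–Schwarz argument (exactly as in Raz–Sharir–Solymosi) then gives $|\fu(A_1,A_2)|=\Omega_d(|A_1|^{4/3})$, and since $f=g\circ\fu$ with $\deg g$ bounded, $|f(A_1,A_2)|\geq |\fu(A_1,A_2)|/\deg g=\Omega_d(|A_1|^{4/3})$, which is i).

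\textbf{Case B: there is an irreducible curve $C$ contained in $\geq m$ of the $C^{(1)}_{\fu,\ab_2}$ (the mirror situation giving iii)).} Here I would extract, from the $\geq m$ indices $\ab_2=(a_2,a'_2)$ whose curve contains $C$, the information that $C$ divides $p_{a_2}(x_1)q_{a'_2}(x_2)-p_{a'_2}(x_2)q_{a_2}(x_1)$ for each such pair. Fixing one such pair as a ``base'' and varying the other, Lemma \ref{R29} (with the roles of $x_1,x_2$ played appropriately, noting $C$ irreducible nonconstant) produces, for each such pair, rational functions realizing $f(x_1,a_2)=h_{a_2}(l_1(x_1))$ through a \emph{common} inner function $l_1$ — more precisely, one first gets that $\field(f(x_1,a_2):a_2\in S)\subseteq\field(l_1(x_1))$ for the relevant finite set $S$ of first/second coordinates via Lüroth (Theorem \ref{R6}), then that $\deg l_1$-quotients align. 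Once $l_1$ is common to $\geq c_{3,d}$ values of the slice parameter, Proposition \ref{R28} i) lifts this to $f(x_1,x_2)=h(l_1(x_1),x_2)$ for some $h\in\field(x_1,x_2)$; Lemma \ref{R30} together with the degree bookkeeping (the outer pieces $h^{(\ab)}_{\ab,\ab'}$ have degree $\le 1$, forced because $g$ already absorbed the maximal common factor) shows $\deg_{x_1}h\le 1$. This is ii) with $g$ the outer function from Lemma \ref{R33}; the analysis is essentially Lemma \ref{R34} i) applied to the slices produced by the shared curve.

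The main obstacle I anticipate is the bookkeeping in Case B: one must verify that the ``outer'' functions obtained from Lemma \ref{R29} along a shared irreducible curve genuinely have degree $\leq 1$ (so that the common inner function $l_1$ carries all the degree), and that enough slice-parameters share the \emph{same} $l_1$ up to linear equivalence to feed Proposition \ref{R28}. This requires combining the maximality of $\deg g$ from Lemma \ref{R33} iii) with Proposition \ref{R18} (bounding nonequivalent decompositions) and the Bézout bound (Theorem \ref{R13}) controlling how a single irreducible $C$ can sit inside the family — precisely the mechanism packaged in Lemma \ref{R34}. The incidence side (Case A) is comparatively routine given Theorem \ref{R23} and the standard Cauchy–Schwarz conversion of an incidence bound into a lower bound on $|\fu(A_1,A_2)|$. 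A minor technical point is that Theorem \ref{R23} needs $\deg\fu\le d$ and $A_1\times A_2\subseteq\Dom\fu$; the former follows since $\fu$ arises as a ``component'' of the decomposition $f=g\circ\fu$ and $\deg f\le d$ forces $\deg\fu\le d$, and the latter may require discarding $O_d(1)$ rows/columns of $A_1,A_2$ where $\fu$ is undefined, which does not affect the asymptotics.
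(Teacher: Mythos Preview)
Your proposal is correct and follows essentially the same route as the paper: apply Lemma~\ref{R33} to obtain $f=g\circ\fu$ and the sets $B_1,B_2$, dichotomize on whether some irreducible curve is contained in at least $c_{6,d}^2$ of the curves $C^{(1)}_{\fu,\ab_2}$ or $C^{(2)}_{\fu,\ab_1}$ (the paper takes $m=c_{6,d}^2$, to match the hypothesis of Lemma~\ref{R34}), use Theorem~\ref{R23} plus Cauchy--Schwarz in Case~A, and in Case~B feed the decompositions from Lemma~\ref{R29} into Lemma~\ref{R34}. Two small corrections: in Case~B the decompositions from Lemma~\ref{R29} are applied to $\fu$ (not $f$), with $g$ prepended only at the very end, and the lifting step inside Lemma~\ref{R34} uses Proposition~\ref{R28}~ii) rather than~i), since the shared factor $l_1$ is the \emph{inner} function.
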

    \begin{proof}
   We assume that  $|A_1|,|A_2|>\max\left\{\frac{1}{c_{7,d}}, 2\right\}$ where $c_{7,d}$ is as in Lemma \ref{R33}. If $\deg_{x_1}f=0$ (resp. $\deg_{x_2}f=0$), then we are in ii) (resp. iii)) taking $l_1(x_1)=x_1, g(x_2)=f(x_1,x_2) $ and $h(x_1,x_2)=x_2$ (resp. $l_2(x_2)=x_2, g(x_1)=f(x_1,x_2) $ and $h(x_1,x_2)=x_1$).  Thus, from now on, we assume that $\deg_{x_1} f,\deg_{x_2} f\geq 1$. From Lemma \ref{R33},  there are $\fu\in \field(x_1,x_2)$, $g\in\field(x)$,  $B_1\subseteq A_1\times A_1$ and $B_2\subseteq A_2\times A_2$ such that the following  conditions are satisfied:
\begin{enumerate}
\item[I)]$f(x_1,x_2)=g(\fu(x_1,x_2))$.
\item[II)]$|(A_1\times A_1)\setminus B_1|\leq c_{6,d}|A_1|$ and $|(A_2\times A_2)\setminus B_2|\leq c_{6,d}|A_2|$.
\item[III)]$g(x_1)$ dominates $\{f(x_1,a_2),f(x_1,a'_2)\}$ for all $(a_2,a'_2)\in B_2$ and $g(x_2)$ dominates $\{f(a_1,x_2),f(a'_1,x_2)\}$ for all $(a_1,a'_1)\in B_1$.
\end{enumerate} 
The proof of the lemma is divided into two cases.
\begin{enumerate}
\item[$\bullet$]
Assume that for any irreducible curve $C$ in $\field^2$,
\begin{equation}
\label{E59}
  \left|\left\{\ab\in B_2:\;C^{(1)}_{\fu,\ab}\supseteq  C\right\}\right |, \left|\left\{\ab\in B_1:\;C^{(2)}_{\fu,\ab}\supseteq  C\right\}\right|< c_{6,d}^2.
 \end{equation}
 From II) and (\ref{E59}), the assumptions of Theorem \ref{R23} are satisfied for $m=c_{6,d}^2$; then Theorem \ref{R23} implies that
 \begin{equation}
 \label{E60}
\left|I\left(A_1\times A_1,\Cc^{(1)}_{\fu,A_2\times A_2}\right)\right|=O_{m,d}\left(|A_1|^{\frac{8}{3}}\right)=O_{d}\left(|A_1|^{\frac{8}{3}}\right).
 \end{equation}
 Write
 \begin{equation*}
 Q:=\left\{(a_1,a_2,a'_1,a'_2)\in A_1\times A_2\times A_1\times A_2:\;\fu(a_1,a_2)=\fu(a'_1,a'_2)\right\}.
 \end{equation*}
 From the Cauchy-Schwarz inequality, 
 \begin{equation}
 \label{E61}
 \sum_{b\in \fu(A_1,A_2)}|\fu^{-1}(b)|^2\geq \frac{1}{|\fu(A_1,A_2)|}\left(\sum_{b\in \fu(A_1,A_2)}|\fu^{-1}(b)|\right)^2.
 \end{equation}
 Thus 
 \begin{align}
 \label{E62}
 |Q|&=\sum_{b\in \fu(A_1,A_2)}|\fu^{-1}(b)|^2\nonumber\\
 &\geq \frac{1}{|\fu(A_1,A_2)|}\left(\sum_{b\in \fu(A_1,A_2)}|\fu^{-1}(b)|\right)^2&\Big(\text{by (\ref{E61})}\Big)\nonumber\\
 &= \frac{(|A_1||A_2|)^2}{|\fu(A_1,A_2)|}\nonumber\\
  &= \frac{|A_1|^4}{|\fu(A_1,A_2)|}.
 \end{align}
 Now note that the definition of the algebraic curves $C^{(1)}_{\fu,(a_2,a'_2)}$ with $(a_2,a'_2)\in A_2\times A_2$ and  its points $(a_1,a'_1) \in A_1\times A_1$ yield that 
 \begin{align}
 \label{E63}
 |Q|&= \left|I\left(A_1\times A_1,\Cc^{(1)}_{\fu,A_2\times A_2}\right)\right|.
 \end{align}
 Then
 \begin{align}
 \label{E64}
 |\fu(A_1,A_2)|&\geq \frac{|A_1|^4}{|Q|}&\Big(\text{by (\ref{E62})}\Big)\nonumber\\
 &=\frac{|A_1|^4}{\left|I\left(A_1\times A_1,\Cc^{(1)}_{\fu,A_2\times A_2}\right)\right|}&\Big(\text{by (\ref{E63})}\Big)\nonumber\\
 &=\Omega_d\left(|A_1|^{\frac{4}{3}}\right).&\Big(\text{by (\ref{E60})}\Big)
 \end{align}
 Since $f(x_1,x_2)=g(\fu(x_1,x_2))$, 
 \begin{equation}
 \label{E65}
 |f(A_1,A_2)|\geq \frac{1}{\deg g}|\fu(A_1,A_2)|\geq \frac{1}{d}|\fu(A_1,A_2)|.
 \end{equation}
 Finally, (\ref{E64}) and (\ref{E65}) imply i).
  \item[$\bullet$]Assume that there exists an irreducible curve $C$ in $\field^2$ such that
\begin{equation}
\label{E66}
  \left|\left\{\ab\in B_2:\;C^{(1)}_{\fu,\ab}\supseteq  C\right\}\right | \geq  c_{6,d}^2
 \end{equation}
  or 
  \begin{equation}
  \label{E67}
  \left|\left\{\ab\in B_1:\;C^{(2)}_{\fu,\ab}\supseteq  C\right\}\right|\geq  c_{6,d}^2.
  \end{equation}
  Suppose that (\ref{E66}) happens and set $B'_2:=\left\{\ab\in B_2:\;C^{(1)}_{\fu,\ab}\supseteq  C\right\}$. Since $C$ is an irreducible curve, there is an irreducible nonconstant polynomial $r(y_1,y_2)\in\field[y_1,y_2]$ such that 
  \begin{equation*}
  C=\left\{(b_1,b_2)\in\field^2:\;r(b_1,b_2)=0\right\}.
  \end{equation*}
  For any $\ab=(a_1,a_2)\in B'_2$, write $\fu(y_1,a_1)=\frac{p_{a_1}(y_1)}{q_{a_1}(y_1)}$ and $\fu(y_2,a_2)=\frac{p_{a_2}(y_2)}{q_{a_2}(y_2)}$ with $p_{a_1},q_{a_1}\in\field[y_1]$, $p_{a_2},q_{a_2}\in\field[y_2]$, $(p_{a_1},q_{a_1})=1$ and $(p_{a_2},q_{a_2})=1$, and set $r_\ab(y_1,y_2):=p_{a_1}(y_1)q_{a_2}(y_2)-q_{a_1}(y_1)p_{a_2}(y_2)$; inasmuch as $\ab\in B'_2$, we have that $C^{(1)}_{\fu,\ab}\supseteq  C$ and this means that $r$ divides $r_\ab$.   For any $\ab=(a_1,a_2), \ab'=(a'_1,a'_2)\in B'_2$, we have that $r$ divides $r_{\ab}$ and $r_{\ab'}$. Therefore we can apply Lemma \ref{R29} to the rational functions $\fu(y_1,a_1),\fu(y_2,a_2),\fu(y_1,a'_1)$ and $\fu(y_2,a'_2)$, and we get there exist $h^{(\ab)}_{\ab,\ab'},h^{(\ab')}_{\ab,\ab'},l^{(1)}_{\ab,\ab'},l^{(2)}_{\ab,\ab'}\in\field(x)$ such that
  \begin{align}
  \label{E68}
      \fu(y_1,a_1)&=h^{(\ab)}_{\ab,\ab'}\left(l^{(1)}_{\ab,\ab'}(y_1)\right)& \fu(y_2,a_2)&=h^{(\ab)}_{\ab,\ab'}\left(l^{(2)}_{\ab,\ab'}(y_2)\right)\nonumber\\
    \fu(y_1,a'_1)&=h^{(\ab')}_{\ab,\ab'}\left(l^{(1)}_{\ab,\ab'}(y_1)\right)& \fu(y_2,a'_2)&=h^{(\ab')}_{\ab,\ab'}\left(l^{(2)}_{\ab,\ab'}(y_2)\right).
     \end{align}
     On the one hand, (\ref{E66}) implies that $|B'_2|\geq c_{6,d}^2$. On the other hand, since $\fu(x_1,a_1)$ and $\fu(x_1,a_2)$ factorize through $h^{(\ab)}_{\ab,\ab'}$ by (\ref{E68}), we see that $f(x_1,a_1)$ and $f(x_1,a_2)$ factorize through $g\circ h^{(\ab)}_{\ab,\ab'}$; however $\ab\in B_2$ so $g$ dominates $\{f(x_1,a_1),f(x_1,a_2)\}$ and therefore $\deg g\geq \deg g\circ h^{(\ab)}_{\ab,\ab'}$; in particular $\deg h^{(\ab)}_{\ab,\ab'}\leq 1$ for all $\ab,\ab'\in B'_2$. Therefore the assumptions of Lemma \ref{R34} i) are satisfied, and thereby we can conclude the existence of $h\in\field(x_1,x_2)$ and $l_1\in\field(x)$ such that $\deg_{x_1}h\leq 1$ and 
     \begin{equation}
     \label{E69}
     \fu(x_1,x_2)=h(l_1(x_1),x_2).
     \end{equation}
     From I) and (\ref{E69}), we get ii). Likewise, if (\ref{E67}) holds, we obtain iii).\qedhere
 \end{enumerate}
  \end{proof}
  
    Finally, we show Theorem \ref{R3}.
    \begin{proof}
   \emph{(Theorem \ref{R3}) }Assume that $|A_1|,|A_2|>d\max\left\{\frac{1}{c_{7,d}},2\right\}$ with $c_{7,d}$ as in Lemma \ref{R33}. From Lemma \ref{R35}, we have $3$ possibilities. If Lemma \ref{R35} i) holds, then we are done. It remains to prove Theorem \ref{R3} when  Lemma \ref{R35} ii) or Lemma \ref{R35} iii) happen but they are symmetric; thus we prove our theorem when  Lemma \ref{R35} ii) occurs and the other case is proven \emph{mutatis mutandis}. Then there are $\gu,\lu\in\field(x)$ and $\fu\in\field(x_1,x_2)$ such that $f(x_1,x_2)=\gu(\fu(\lu(x_1),x_2))$ and $\deg_{x_1} \fu\leq 1$. If $\deg_{x_1} \fu=0$, then $\fu(x_1,x_2)=\tilde{h}(x_2)$ for some $\tilde{h}\in\field(x)$, and ii) holds in this case. From now on, we assume that
   \begin{equation}
   \label{E70}
   \deg_{x_1}\fu=1.
   \end{equation}
   Write $A'_1:=\lu(A_1)$ and let $A'_2$ be a subset of $A_2$ such that $|A'_2|=|A'_1|$. Note that
   \begin{equation}
   \label{E71}
   |A'_2|=|A'_1|=|\lu(A_1)|\geq \frac{1}{\deg \lu}|A_1|\geq \frac{1}{d}|A_1|.
   \end{equation}
  Since $|A_1|,|A_2|>d\max\left\{\frac{1}{c_{7,d}}, 2\right\}$, we have by (\ref{E71}) that $|A'_1|,|A'_2|>\max\left\{\frac{1}{c_{7,d}}, 2\right\}$. Thus, from Lemma \ref{R33}, there are $\fd\in \field(x_1,x_2)$, $\gd\in\field(x)$,  $B'_1\subseteq A'_1\times A'_1$ and $B'_2\subseteq A'_2\times A'_2$ such that the following conditions are satisfied.
\begin{enumerate}
\item[I)]$\fu(x_1,x_2)=\gd(\fd(x_1,x_2))$.
\item[II)]$|(A'_1\times A'_1)\setminus B'_1|\leq c_{6,d}|A'_1|$ and $|(A'_2\times A'_2)\setminus B'_2|\leq c_{6,d}|A'_2|$.
\item[III)]$\gd(x_1)$ dominates $\{\fu(x_1,a_2),\fu(x_1,a'_2)\}$ for all $(a_2,a'_2)\in B'_2$ and $\gd(x_2)$ dominates $\{\fu(a_1,x_2),\fu(a'_1,x_2)\}$ for all $(a_1,a'_1)\in B'_1$.
\end{enumerate} 
Since $\deg_{x_1} \fu=1$, I) implies that $\deg \gd=1$ and therefore we may assume that $\gd(x)=x$ from now on; therefore $\fu=\fd$. Set 
\begin{equation*}
B''_2:=\left\{(a,a')\in B'_2:\;\deg \fu(x_1,a)=\deg \fu(x_1,a')=1\right\}.
\end{equation*}
From (\ref{E70}), we know that there exist $p_1,p_0,q_1,q_0\in\field[x_2]$ such that $\fu(x_1,x_2)=\frac{p_1(x_2)x_1+p_0(x_2)}{q_1(x_2)x_1+q_0(x_2)}$. Thus, for all $a\in \field$ such that $a$ is neither a root of $p_1$ nor a root of $q_1$, we have that $\deg \fu(x_1,a)=1$. Since $\deg p_1,\deg q_1\leq d$, the previous discussion yields that 
\begin{equation}
\label{E72}
|B''_2|\geq |B'_2|-4d|A'_2|.
\end{equation}
The proof is divided into two cases.
\begin{enumerate}
\item[$\bullet$] Assume that for any irreducible curve $C$ in $\field^2$,
\begin{equation}
\label{E73}
  \left|\left\{\ab\in B'_1:\;C^{(2)}_{\fu,\ab}\supseteq  C\right\}\right|< c_{6,d}^2.
 \end{equation}
For all $(a,a')\in B''_2$, since $\deg \fu(x_1,a)=\deg \fu(x_1,a')=1$, Lemma \ref{R17} implies that the algebraic curve $C^{(1)}_{\fu,(a,a')}$ is irreducible and thereby 
 \begin{equation}
\label{E74}
  \left|\left\{\ab\in B''_2:\;C^{(1)}_{\fu,\ab}\supseteq  C\right\}\right|\leq 1\leq  c_{6,d}^2.
 \end{equation}
 From II), (\ref{E72}),(\ref{E73}) and (\ref{E74}), the assumptions of Theorem \ref{R23} are satisfied for $m=c_{6,d}^2$; then Theorem \ref{R23} implies that
 \begin{equation}
 \label{E75}
\left|I\left(A'_1\times A'_1,\Cc^{(1)}_{\fu,A'_2\times A'_2}\right)\right|=O_{m,d}\left(|A'_1|^{\frac{8}{3}}\right)=O_{d}\left(|A'_1|^{\frac{8}{3}}\right).
 \end{equation}
 Set 
  \begin{equation*}
 Q:=\left\{(a_1,a_2,a'_1,a'_2)\in A'_1\times A'_2\times A'_1\times A'_2:\;\fu(a_1,a_2)=\fu(a'_1,a'_2)\right\}.
 \end{equation*}
 From the Cauchy-Schwarz inequality, 
 \begin{equation}
 \label{E76}
 \sum_{b\in \fu(A'_1,A'_2)}|\fu^{-1}(b)|^2\geq \frac{1}{|\fu(A'_1,A'_2)|}\left(\sum_{b\in \fu(A'_1,A'_2)}|\fu^{-1}(b)|\right)^2.
 \end{equation}
 Hence 
 \begin{align}
 \label{E77}
 |Q|&=\sum_{b\in \fu(A'_1,A'_2)}|\fu^{-1}(b)|^2\nonumber\\
 &\geq \frac{1}{|\fu(A'_1,A'_2)|}\left(\sum_{b\in \fu(A'_1,A'_2)}|\fu^{-1}(b)|\right)^2&\Big(\text{by (\ref{E76})}\Big)\nonumber\\
 &= \frac{(|A'_1||A'_2|)^2}{|\fu(A'_1,A'_2)|}\nonumber\\
  &= \frac{|A'_1|^4}{|\fu(A'_1,A'_2)|}.
 \end{align}
 Now note that the definition of the algebraic curves $C^{(1)}_{\fu,(a_2,a'_2)}$ with $(a_2,a'_2)\in A'_2\times A'_2$ and  its points $(a_1,a'_1) \in A'_1\times A'_1$ yield that 
 \begin{align}
 \label{E78}
 |Q|&= \left|I\left(A'_1\times A'_1,\Cc^{(1)}_{\fu,A'_2\times A'_2}\right)\right|.
 \end{align}
 Then
 \begin{align}
 \label{E79}
 |\fu(A'_1,A'_2)|&\geq \frac{|A'_1|^4}{|Q|}&\Big(\text{by (\ref{E77})}\Big)\nonumber\\
 &=\frac{|A'_1|^4}{\left|I\left(A'_1\times A'_1,\Cc^{(1)}_{\fu,A'_2\times A'_2}\right)\right|}&\Big(\text{by (\ref{E78})}\Big)\nonumber\\
 &=\Omega_d\left(|A'_1|^{\frac{4}{3}}\right).&\Big(\text{by (\ref{E75})}\Big)
 \end{align}
 Since $A'_2\subseteq A_2, A'_1=\lu(A_1)$ and $f(x_1,x_2)=\gu(\fu(\lu(x_1),x_2))$, we obtain that
 \begin{equation}
 \label{E80}
  |f(A_1,A_2)|\geq |\fu(A'_1,A'_2)|.
 \end{equation}
 Finally, 
 \begin{align*}
  |f(A_1,A_2)|&\geq |\fu(A'_1,A'_2)|&\Big(\text{by (\ref{E80})}\Big)\\
  &=\Omega_d\left(|A'_1|^{\frac{4}{3}}\right)&\Big(\text{by (\ref{E79})}\Big)\\
    &=\Omega_d\left(|A_1|^{\frac{4}{3}}\right).&\Big(\text{by (\ref{E71})}\Big)
 \end{align*}
    \item[$\bullet$] Assume that there exists an irreducible curve $C$ in $\field^2$ such that
\begin{equation}
\label{E81}
  \left|\left\{\ab\in B'_1:\;C^{(2)}_{\fu,\ab}\supseteq  C\right\}\right|\geq  c_{6,d}^2.
 \end{equation}
   Set $B''_1:=\left\{\ab\in B'_1:\;C^{(2)}_{\fu,\ab}\supseteq  C\right\}$. Since $C$ is an irreducible curve, there is an irreducible nonconstant polynomial $r(y_1,y_2)\in\field[y_1,y_2]$ such that 
  \begin{equation*}
  C=\left\{(b_1,b_2)\in\field^2:\;r(b_1,b_2)=0\right\}.
  \end{equation*}
  For any $\ab=(a_1,a_2)\in B''_1$, write $\fu(a_1,y_1)=\frac{p_{a_1}(y_1)}{q_{a_1}(y_1)}$ and $\fu(a_2,y_2)=\frac{p_{a_2}(y_2)}{q_{a_2}(y_2)}$ with $p_{a_1},q_{a_1}\in\field[y_1]$, $p_{a_2},q_{a_2}\in\field[y_2]$, $(p_{a_1},q_{a_1})=1$ and $(p_{a_2},q_{a_2})=1$, and set $r_\ab(y_1,y_2):=p_{a_1}(y_1)q_{a_2}(y_2)-q_{a_1}(y_1)p_{a_2}(y_2)$; inasmuch as $\ab\in B''_1$, we have that $C^{(2)}_{\fu,\ab}\supseteq  C$ and this means that $r$ divides $r_\ab$.   For any $\ab=(a_1,a_2), \ab'=(a'_1,a'_2)\in B''_1$, we have that $r$ divides $r_{\ab}$ and $r_{\ab'}$. Therefore we can apply Lemma \ref{R29} to the rational functions $\fu(a_1,y_1),\fu(a_2,y_2),\fu(a'_1,y_1)$ and $\fu(a'_2,y_2)$, and we get there exist $h^{(\ab)}_{\ab,\ab'},h^{(\ab')}_{\ab,\ab'},l^{(1)}_{\ab,\ab'},l^{(2)}_{\ab,\ab'}\in\field(x)$ such that
  \begin{align}
  \label{E82}
      \fu(a_1,y_1)&=h^{(\ab)}_{\ab,\ab'}\left(l^{(1)}_{\ab,\ab'}(y_1)\right)& \fu(a_2,y_2)&=h^{(\ab)}_{\ab,\ab'}\left(l^{(2)}_{\ab,\ab'}(y_2)\right)\nonumber\\
    \fu(a'_1,y_1)&=h^{(\ab')}_{\ab,\ab'}\left(l^{(1)}_{\ab,\ab'}(y_1)\right)& \fu(a'_2,y_2)&=h^{(\ab')}_{\ab,\ab'}\left(l^{(2)}_{\ab,\ab'}(y_2)\right).
     \end{align}
     On the one hand, (\ref{E81}) implies that $|B''_1|\geq c_{6,d}^2$. On the other hand, since $\fu(a_1,x_2)$ and $\fu(a_2,x_2)$ factorize through $h^{(\ab)}_{\ab,\ab'}$ by (\ref{E82}), we see that $f(a_1,x_2)$ and $f(a_2,x_2)$ factorize through $g\circ h^{(\ab)}_{\ab,\ab'}$; however $\ab\in B'_1$ so $g$ dominates $\{f(a_1,x_2),f(a_2,x_2)\}$ and therefore $\deg g\geq \deg g\circ h^{(\ab)}_{\ab,\ab'}$; in particular $\deg h^{(\ab)}_{\ab,\ab'}\leq 1$ for all $\ab,\ab'\in B''_1$. Therefore the assumptions of Lemma \ref{R34} ii) are satisfied, and we can conclude the existence of $h\in\field(x_1,x_2)$ and $l_2\in\field(x)$ such that $\deg_{x_2}h\leq 1$ and 
     \begin{equation}
     \label{E83}
     \fu(x_1,x_2)=h(x_1,l(x_2)).
     \end{equation}
   Since $f(x_1,x_2)=g(\fu (l_1(x_1),x_2))$, we get from (\ref{E83}) that
   \begin{equation*}
   f(x_1,x_2)=g(h(l_1(x_1),l_2(x_2))).\qedhere
\end{equation*}    
 \end{enumerate}
 \end{proof}
  \section{Proofs of the main results}
    In this section we conclude the proofs of the main results of this paper. Before we complete the proof of these results, we will need some trivial but important facts. Let $\field\in\{\Real,\Com\}$, $d\in\Nat$, $g\in\field(x)$ and $f,h\in\field(x_1,x_2)$ be such that $\deg f\leq d$ and $f(x_1,x_2)=g(h(x_1,x_2))$. Then $\deg g\leq \deg f\leq d$. Hence, for any $A_1, A_2$ nonempty finite subsets of $\field$ such that $A_1\times A_2\subseteq \Dom f$,
    \begin{equation*}
    |h(A_1,A_2)|\geq |f(A_1,A_2)|\geq \frac{1}{d}|h(A_1,A_2)|
    \end{equation*}
    This claim leads to the following fact.
    \begin{rem}
    \label{R36}
     Let $\field\in\{\Real,\Com\}$, $d\in\Nat$, $g\in\field(x)$ and $h\in\field(x_1,x_2)$ be such that $\deg g(h(x_1,x_2))\leq d$. For any $a\in\field$, set $h_a(x_1,x_2):=h(x_1,x_2+a)$  and $f_a(x_1,x_2):= g(h_a(x_1,x_2))$. Then, for any $a,b\in \field$ and $A_1, A_2$ nonempty finite subsets of $\field$ such that $|A_1|=|A_2|$ and $A_1\times A_2\subseteq \Dom f_0$, 
     \begin{equation*}
     |f_a(A_1,A_2-\{a\})|=\Omega_d\left(|A_1|^{\frac{4}{3}}\right)
\end{equation*}      
if and only if 
\begin{equation*}
     |f_b(A_1,A_2-\{b\})|=\Omega_d\left(|A_1|^{\frac{4}{3}}\right).
\end{equation*} 
    \end{rem}
   For any $\field\in\{\Real,\Com\}$, denote by $\Mat(\field)$ the set of $2\times 2$-matrices with coefficients in $\field$, and by $\GL(\field)$ the invertible elements of $\Mat(\field)$.  For any $a\in \field$ and $X\in\Mat(\field)$, we denote by $aX=a\cdot X$ the scalar multiplication of  $X$ by $a$. Another important fact, and the reason why we have different characterizations of the nonexpander rational functions in the real and the  complex case, is the Jordan decomposition of the elements of $\Mat(\field)$.
    \begin{rem}
    \label{R37}
    \begin{enumerate}
    \item[i)]Let $X\in\Mat(\Real)$. Then there are $H\in \GL(\Real)$ and $J\in\Mat(\Real)$ with one of the following forms
    \begin{enumerate}
     \item[I)]\begin{equation*}
J= \left( \begin{array}{ccc}
a&1\\
0&a
 \end{array} \right),\qquad a\in\Real
  \end{equation*}
    \item[II)]\begin{equation*}
J= \left( \begin{array}{ccc}
a&0\\
0&b
 \end{array} \right),\qquad a,b\in\Real
  \end{equation*}
   \item[III)]\begin{equation*}
J= \left( \begin{array}{ccc}
a&-b\\
b&a
 \end{array} \right),\qquad a,b\in\Real
  \end{equation*}
    \end{enumerate}
    such that $X=HJH^{-1}$.
     \item[ii)]Let $X\in\Mat(\Com)$. Then there are $H\in \GL(\Com)$ and $J\in\Mat(\Com)$ with one of the following forms
    \begin{enumerate}
     \item[I)]\begin{equation*}
J= \left( \begin{array}{ccc}
a&1\\
0&a
 \end{array} \right),\qquad a\in\Com
  \end{equation*}
    \item[II)]\begin{equation*}
J= \left( \begin{array}{ccc}
a&0\\
0&b
 \end{array} \right),\qquad a,b\in\Com
  \end{equation*}
   \end{enumerate}
    such that $X=HJH^{-1}$.
    \end{enumerate}
        \end{rem}
      For each $\field\in\{\Real,\Com\}$ and  $X=\left( \begin{array}{ccc}
a_1&a_2\\
a_3&a_4
 \end{array} \right)\in \Mat(\field)$, define the rational function  $g_X(x)=\frac{a_1x+a_2}{a_3x+a_4}$.  With a simple calculation, we get the following fact. 
 \begin{rem}
 \label{R38}
 Let  $\field\in\{\Real,\Com\}$ and  $X,Y\in \Mat(\field)$. Then
 \begin{equation*}
 g_{XY}(x)=g_X(g_Y(x)).
 \end{equation*}
 \end{rem}
 The last remark that we need is a consequence of the fact that one variable polynomials of degree at most $d$ cannot have more than $d$ roots.
 \begin{rem}
 \label{R39}
 Let $\field\in\{\Real,\Com\}$, $d\in\Nat$, $A$ be a subset of $\field$ with $|A|>2d$ and $f_1,f_2\in\field(x_1,x_2)$ be such that $\deg f_1,\deg f_2\leq d$. If $f_1(x_1,a)=f_2(x_1,a)$ for all $a\in A$, then $f_1=f_2$.
 \end{rem}
 We conclude the proof of Theorem \ref{R4}.
 \begin{proof}(\emph{Theorem \ref{R4}}) From Theorem \ref{R3}, we have two possibilities. If $|f(A_1,A_2)|=\Omega_d\left(|A_1|^{\frac{4}{3}}\right)$, there is nothing to prove.  Thus it remains to prove Theorem \ref{R4} when there are $\gd,\ld,\ldd\in\Real(x)$ and $\fd\in\Real(x_1,x_2)$ such that $f(x_1,x_2)=\gd(\fd(\ld(x_1),\ldd(x_2)))$ and $\deg_{x_1}\fd,\deg_{x_2}\fd\leq 1$. Therefore there are $a_1,a_2,a_3,a_4,b_1,b_2,b_3,b_4\in\Real$ such that
 \begin{equation*}
\fd(x_1,x_2)=\frac{a_1x_1x_2+a_2x_2+b_1x_1+b_2}{a_3x_1x_2+a_4x_2+b_3x_1+b_4}
 \end{equation*}
 If $\deg_{x_2}\fd=0$, then $f$ is independent of $x_1$ and we can write it as in i) or ii) (i.e. $f(x_1,x_2)=\hat{f}(x_1)$ for some rational function $\hat{f_1}$). Thus we assume that $\deg_{x_2} \fd=1$ and likewise we can assume that  $\deg_{x_1} \fd=1$. In particular that means that  there exists $a\in\Real$ such that $\fd(x_1,a)$ is not a constant rational  function in $\field(x_1)$. From Remark \ref{R36}, we may assume that $a=0$ so $\fd(x_1,0)$ is not a constant rational  function in $\Real(x_1)$. Set
 \begin{equation*}
 X:=\left( \begin{array}{ccc}
a_1&a_2\\
a_3&a_4
 \end{array} \right),\quad  Y:=\left( \begin{array}{ccc}
b_1&b_2\\
b_3&b_4
 \end{array} \right).
\end{equation*}  
Since $\fd(x_1,0)$ is not constant, $Y$ is invertible and we define $Z:=Y^{-1}X$; also denote by $I$ the identity matrix in $\Mat(\Real)$. For any $e\in\Real$,
\begin{equation}
\label{E84}
eX+Y=Y(eZ+I).
\end{equation}
There are $H\in\GL(\Real)$ and $J\in\Mat(\Real)$ such that $J$ has one of the forms of Remark \ref{R37} i) and $Z=HJH^{-1}$. For any $e\in \Real$, 
\begin{align}
\label{E85}
eX+Y&=Y(eZ+I)&\Big(\text{by (\ref{E84})}\Big)\nonumber\\
&=Y\left(eHJH^{-1}+HH^{1}\right)\nonumber\\
&=YH(eJ+I)H^{-1};
\end{align}
 Remark \ref{R38} and (\ref{E85}) lead to 
\begin{equation}
\label{E86}
\fd(x_1,e)=g_{eX+Y}(x_1)=g_Y\circ g_H\circ g_{eJ+I}\circ g_{H^{-1}}(x_1).
\end{equation}
We conclude the proof depending on which case of Remark \ref{R37} i) we are. 
\begin{enumerate}
\item[I)]Assume that $J= \left( \begin{array}{ccc}
a&1\\
0&a
 \end{array} \right)$ for some $a\in\Real$. Then 
 \begin{equation*}
 g_{eJ+I}(x_1)=\frac{(ae+1)x_1+e}{ae+1}=x+\frac{e}{ae+1}
 \end{equation*}
 Set $\hat{l_2}(x_2)=\frac{x_2}{ax_2+1}\in\Real(x_2)$. Taking  $g=\gd\circ g_Y\circ g_H$, $l_1=g_{H^{-1}}\circ \ld$, $l_2=\hat{l_2}\circ \ldd$, we get that for all $e\in \Real$, 
 \begin{equation*}
 f(x_1,e)=g(l_1(x_1)+l_2(e)),
 \end{equation*}
 and Remark \ref{R39} implies that $f(x_1,x_2)=g(l_1(x_1)+l_2(x_2))$.
 \item[II)]Assume that $J= \left( \begin{array}{ccc}
a&0\\
0&b
 \end{array} \right)$ for some $a,b\in\Real$. Then 
 \begin{equation*}
 g_{eJ+I}(x_1)=\frac{(ae+1)x_1}{be+1}=x_1\cdot \left(\frac{ae+1}{be+1}\right)
 \end{equation*}
 Set $\hat{l_2}(x_2)=\frac{ax_2+1}{bx_2+1}\in\Real(x_2)$. Taking  $g=\gd\circ g_Y\circ g_H$, $l_1=g_{H^{-1}}\circ \ld$, $l_2=\hat{l_2}\circ \ldd$, we get that for all $e\in \Real$, 
 \begin{equation*}
 f(x_1,e)=g(l_1(x_1)\cdot l_2(e)),
 \end{equation*}
 and Remark \ref{R39} implies that $f(x_1,x_2)=g(l_1(x_1)\cdot l_2(x_2))$.
 \item[III)]Assume that $J= \left( \begin{array}{ccc}
a&-b\\
b&a
 \end{array} \right)$ for some $a,b\in\Real$. Then 
 \begin{equation*}
 g_{eJ+I}(x_1)=\frac{(ae+1)x_1-be}{bex_1+(ae+1)}=\frac{x_1+ \frac{be}{ae+1}}{- \frac{be}{ae+1}\cdot x_1+1}
 \end{equation*}
 Set $\hat{l_2}(x_2)=\frac{bx_2}{ax_2+1}\in\Real(x_2)$. Taking  $g=\gd\circ g_Y\circ g_H$, $l_1=g_{H^{-1}}\circ \ld$, $l_2=\hat{l_2}\circ \ldd$, we get that for all $e\in \Real$, 
 \begin{equation*}
 f(x_1,e)=g\left(\frac{l_1(x_1)+l_2(e)}{1-l_1(x_1)l_2(e)}\right),
 \end{equation*}
 and Remark \ref{R39} implies that $f(x_1,x_2)=g\left(\frac{l_1(x_1)+l_2(x_2)}{1-l_1(x_1)l_2(x_2)}\right)$.\qedhere
\end{enumerate}
 \end{proof}
 
 The proof of Theorem \ref{R5} is the same proof that we had in Theorem \ref{R4} but instead of using Remark \ref{R37} i), we use Remark \ref{R37} ii).
   
\end{document}